\numberwithin{equation}{section}
\newtheorem{theorem}{\bf Theorem}[section]
\newtheorem{lemma}[theorem]{\bf Lemma}
\newtheorem{proposition}[theorem]{\bf Proposition}
\theoremstyle{definition}
\newtheorem{remark}[theorem]{\bf Remark}
\newtheorem{definition}[theorem]{\bf Definition}
\numberwithin{equation}{section}
\let\c@theorem\c@equation
\newtheorem*{namedtheorem}{\theoremname}
\newcommand{\theoremname}{testing}
\renewcommand{\leq}{\leqslant}
\renewcommand{\geq}{\geqslant}
\newcommand{\norm}{\trianglelefteq}
\newcommand{\gen}[1]{\langle #1 \rangle}
\newcommand{\ol}{\overline}
\newcommand{\bs}{\backslash}
\newcommand{\lra}{\longrightarrow}
\newcommand{\N}{\mathfrak{N}}
\newcommand{\sS}{\mathscr{S}}
\newcommand{\A}{\mathcal{A}}
\newcommand{\B}{\mathcal{B}}
\newcommand{\C}{\mathcal{C}}
\newcommand{\F}{\mathcal{F}}
\renewcommand{\L}{\mathcal{L}}
\renewcommand{\O}{\mathcal{O}}
\newcommand{\R}{\mathcal{R}}
\newcommand{\T}{\mathcal{T}}
\newcommand{\Q}{\mathcal{Q}}
\newcommand{\W}{\mathcal{W}}
\newcommand{\Y}{\mathcal{Y}}
\newcommand{\Z}{\mathcal{Z}}
\renewcommand{\phi}{\varphi}
\DeclareMathAlphabet\EuR{U}{eur}{m}{n}
\SetMathAlphabet\EuR{bold}{U}{eur}{b}{n}
\newcommand{\curs}{\EuR}
\newcommand{\Ab}{\curs{Ab}}
\newcommand{\id}{\operatorname{id}}
\newcommand{\Hom}{\operatorname{Hom}}
\newcommand{\Mor}{\operatorname{Mor}}
\newcommand{\Aut}{\operatorname{Aut}}
\newcommand{\Out}{\operatorname{Out}}
\newcommand{\Inn}{\operatorname{Inn}}
\newcommand{\End}{\operatorname{End}}
\newcommand{\Syl}{\operatorname{Syl}}
\begin{document}
\title[Control of fixed points]{Control of fixed points and existence and
uniqueness of centric linking systems}
\author{George Glauberman}
\address{Department of Mathematics \\ University of Chicago \\ 5734 S.
University Ave \\ Chicago, IL 60637}
\email{gg@math.uchicago.edu}
\author{Justin Lynd}
\address{Department of Mathematical Sciences\\ University of Montana \\ 32
Campus Drive \\ Missoula, MT  59812} 
\email{justin.lynd@umontana.edu}
\subjclass[2000]{Primary 20D20, Secondary 20D05}
\date{\today}
\thanks{The second author was partially supported by NSA Young Investigator
Grant H98230-14-1-0312, and was supported by an AMS-Simons travel grant which
allowed for travel related to this work.}

\begin{abstract}
A. Chermak has recently proved that to each saturated fusion system over a
finite $p$-group, there is a unique associated centric linking system. B.
Oliver extended Chermak's proof by showing that all the higher cohomological
obstruction groups relevant to unique existence of centric linking systems
vanish. Both proofs indirectly assume the classification of finite simple
groups. We show how to remove this assumption, thereby giving a
classification-free proof of the Martino-Priddy conjecture concerning the
$p$-completed classifying spaces of finite groups.  Our main tool is a 1971
result of the first author on control of fixed points by $p$-local subgroups.
This result is directly applicable for odd primes, and we show how a slight
variation of it allows applications for $p=2$ in the presence of offenders.
\end{abstract}

\maketitle

\section{Introduction}

Let $p$ be a prime and let $S$ be a Sylow $p$-subgroup a finite group $G$ whose
order is divisible by $p$. The pattern of $G$-conjugacy of the subgroups of
$S$, sometimes called the $p$-\emph{fusion} in $S$ induced by $G$, exerts
considerable influence on the structure and the invariants of $G$ around the
prime $p$. 
For example, it determines the mod-$p$ cohomology of $G$ (by the Stable
Elements Theorem of Cartan and Eilenberg \cite[XII.10.1]{CartanEilenberg1956}),
and the homomorphic images of $G$ that are $p$-groups (by the Hyperfocal
Subgroup Theorem of Puig \cite[Theorem~1.33]{CravenTheory}). Also, while
$p$-fusion has a strong influence on the representation theory of $G$ in
characteristic $p$, the precise connection remains mysterious despite being the
subject of several major conjectures in that area. One of the consequences of
the results in this paper is a new proof of the fact that $p$-fusion in a
finite group determines the homotopy type of the Bousfield-Kan $p$-completion
of the classifying space of the group. 



A \emph{saturated fusion system} over a finite $p$-group $S$ is a category with
objects the subgroups of $S$, and with morphisms injective group homomorphisms
between subgroups, subject to certain axioms originally formulated by Puig
\cite{Puig2006, Puig2009} and recalled in Section~\ref{S:limits}. Given a
finite group $G$ containing $S$ as a Sylow subgroup, there is a fusion system
$\F_S(G)$ over $S$ in which the morphisms are the conjugation maps between
subgroups induced by elements of the group $G$. The fusion system of a finite
group constitutes the standard example of a saturated fusion system.  However,
to any $p$-block of a finite group algebra in characteristic $p$ is associated
a fusion system codifying the $p$-local structure of the block, and the
commonality in these two situations was one of the main motivations behind
Puig's formalization. 

The ``mod-$p$ homotopy theory'' of the group $G$ is embodied in the
Bousfield-Kan $p$-completion of its classifying space. For most classes of
spaces, the task of determining homotopy type and mapping spaces after
$p$-completion is difficult. The Martino-Priddy conjecture asserts that for the
classifying space of a finite group, however, the homotopy type of its
$p$-completion is determined uniquely by the finite category $\F_S(G)$.  More
precisely, two finite groups have homotopy equivalent $p$-completed classifying
spaces if and only if there is an isomorphism between their Sylow $p$-subgroups
inducing an equivalence of categories between the corresponding fusion systems
\cite[Theorem~III.1.17]{AschbacherKessarOliver2011}.  The ``only if'' part of
this conjecture was proved by Martino-Priddy \cite{MartinoPriddy1996}.  The
``if'' part was proved by B.  Oliver \cite{Oliver2004, Oliver2006} using the
classification of the finite simple groups. 

Around the turn of the century, Broto, Levi, and Oliver initiated a detailed
study of Puig's categories from this homotopy-theoretic point of view
\cite{BrotoLeviOliver2003b, BrotoLeviOliver2003}. 
A \emph{centric linking system} is a certain extension category of a fusion
system $\F$ with a restricted collection of objects (the $\F$-\emph{centric}
subgroups, defined in Section~\ref{S:limits}). A canonical centric linking
system at the prime $p$ is easily constructed from an ambient group $G$. 
Moreover, Broto, Levi, and Oliver showed that two finite groups have homotopy
equivalent $p$-completed classifying spaces if and only if there is an
equivalence of categories between their centric linking systems. Thus, the
``if'' part of the Martino-Priddy conjecture is equivalent to the statement
that the fusion system of a finite group uniquely determines its centric
linking system. This focused attention on the question of whether each
saturated fusion system has a unique associated centric linking system, even
when $\F$ is not of the form $\F_S(G)$ for any finite group $G$.  Examples of
``exotic'' classifying spaces (like those that would arise out of such a
development) were first constructed by Benson \cite{Benson1998c}. 

A. Chermak has settled affirmatively the unique existence of centric linking
systems by direct construction \cite{Chermak2013}.  His construction is made
possible by a delicate filtration of the collection of $\F$-centric subgroups,
which makes use of the Thompson subgroup in a critical way, together with an
iterative procedure for extending a linking system on a given collection of
subgroups of $S$ to a linking system on a larger collection.  Chermak's
filtration allows a number of very useful reductions. In particular, within his
work the problem of building inductively a linking system on a larger set of
objects boils down to the problem of extending an automorphism of a linking
system of a $p$-constrained finite group (as defined below) to an automorphism
of the group.  It is in this residual situation where an indirect appeal to the
classification of finite simple groups was needed, in the form of the General
FF-module Theorem of Meierfrankenfeld and Stellmacher
\cite{MeierfrankenfeldStellmacher2012}. 

In a companion paper \cite{Oliver2013}, B. Oliver showed how to interpret
Chermak's proof in terms of the established Broto-Levi-Oliver obstruction
theory for the existence and uniqueness of linking systems \cite[\S
3]{BrotoLeviOliver2003}.  The obstruction groups appearing there are the higher
derived limits over the orbit category of the fusion system, at the level of
$\F$-centric subgroups, of the (contravariant) center functor $\Z_\F\colon
\O(\F^c) \to \Ab$ which sends an $\F$-centric subgroup to its center.  From the
point of view of this obstruction theory, Chermak's filtration gives a way to
filter the center functor $\Z_\F$ so that the higher limits of each subquotient
functor in the filtration can be shown to vanish.  Within Oliver's proof, the
problem is again reduced to the case where $\F$ is the fusion system of a
$p$-constrained finite group, and to showing that $\varprojlim{ }^1$ (when $p$
is odd) and $\varprojlim{ }^2$ (when $p=2$) of certain explicit subquotient
functors of the center functor on the orbit category of this group vanish. For
this, an appeal to the General FF-module Theorem gives a list of the possible
groups, and then the proof is finished by examining these cases.

In this paper, we study the residual situation in Oliver's version of Chermak's
Theorem and give a proof of Proposition~3.3 of \cite{Oliver2013} that does not
depend on the classification of finite simple groups.  When taken together with
the reduction via Chermak's filtration to this situation in \cite{Oliver2013},
a classification-free proof of existence and uniqueness of centric linking
systems, and thus also of the Martino-Priddy conjecture, is obtained at all
primes. 

\begin{theorem}[Oliver \cite{Oliver2013}]\label{T:main}
Let $\F$ be a saturated fusion system over a finite $p$-group. Then
$\varprojlim^k_{\O(\F^c)} \Z_\F = 0$ for all $k \geq 1$ if $p$ is odd, and for
all $k \geq 2$ if $p=2$.
\end{theorem}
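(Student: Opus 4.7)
The plan is to invoke the reduction already performed in \cite{Oliver2013} and then supply a classification-free proof of the residual constrained case. Oliver's argument filters the center functor $\Z_\F$ by subfunctors arising from Chermak's filtration of the collection of $\F$-centric subgroups, and shows that vanishing of the higher limits of each successive quotient reduces to a purely local, group-cohomological statement. Concretely, the reduction produces a finite $p$-constrained group $G$ with Sylow $p$-subgroup $S$ and self-centralizing normal $p$-subgroup $Q = O_p(G)$, together with a $\dsz_{(p)}[G/Q]$-module $V$ built as a subquotient of $Z(Q)$, and asks that $H^1(G/Q; V) = 0$ for $p$ odd and $H^2(G/Q; V) = 0$ for $p = 2$.

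With that reduction in hand, the first step is to pass from $G/Q$ to its Sylow $\ol{S} := S/Q$ via the restriction map in cohomology, which is injective on the $p$-primary part. The second step is to interpret the image of this restriction map as a fixed-point module for a natural $\ol{S}$-action and then apply the first author's 1971 theorem on control of fixed points by $p$-local subgroups: for $p$ odd, that theorem produces a proper characteristic $p$-local subgroup of $\ol{S}$ through which the fixed points factor, forcing the cohomology class in question to collapse once the self-centralizing hypothesis on $Q$ is fed back in.

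The main obstacle is the prime $p = 2$, where the 1971 theorem in its original form fails precisely because of the existence of FF-modules, i.e., modules over $\ol{S}$ admitting nontrivial offenders. To handle this case I would prove a slight variant of the control of fixed points theorem that remains valid in the presence of an offender: the offender itself supplies enough additional structure on the action to recover a conclusion strong enough to kill $H^2(G/Q; V)$. Substituting this variant into the framework above completes the proof of Proposition~3.3 of \cite{Oliver2013} in the $p = 2$ case and hence of the theorem. I expect this variant of the 1971 control-of-fixed-points result to be the technical heart of the argument, since everything else is essentially formal once Oliver's reduction and the standard Sylow-restriction trick are in place.
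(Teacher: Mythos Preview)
Your high-level plan---use Oliver's reduction to a constrained setup and then
apply control of fixed points---matches the paper's strategy, but the details
of your proposal contain real gaps.

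First, the reduced problem is not a group-cohomology vanishing statement
$H^k(G/Q;V)=0$. Oliver's reduction (his Proposition~3.3) lands you in a
general setup $(\Gamma,S,Y)$ and asks that the higher limits
$L^k(\F;\R)=\varprojlim{}^k_{\O(\F^c)}\Z_\F^\R$ vanish for certain
$\F$-invariant intervals $\R\subseteq\sS(S)_{\geq Y}$. These are derived limits
over the orbit category, not cohomology of $\Gamma/Y$, and the two are not
interchangeable here. Consequently your ``restrict to the Sylow $\ol S=S/Q$''
step does not plug into the argument: the actual link between fixed points and
$L^1$ is Oliver's exact sequence
$1\to C_D(\Gamma)\to C_D(\Gamma^*)\to L^1(\F;\R)\to 1$ (his Lemma~1.7(b)),
where $\Gamma^*$ is the locality of elements moving some member of
$\Q=\sS(S)_{\geq Y}-\R$ into $\Q$. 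For odd $p$ one then shows
$C_D(N_\Gamma(J_\A(S)))=C_D(\Gamma)$ by the norm argument (this is a $p$-local
subgroup of $\Gamma$, not of $\ol S$), and since $N_\Gamma(J_\A(S))\leq\Gamma^*$
the sequence collapses. Your description of fixed points ``factoring through a
characteristic $p$-local subgroup of $\ol S$'' does not correspond to this
mechanism.

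Second, for $p=2$ the paper needs far more than a ``slight variant'' of the
1971 theorem. A modified norm argument (Theorem~\ref{T:glawc2}) does handle the
situation when some minimal offender is \emph{not} of a special isolated type
the paper calls \emph{solitary}; but applying it requires building a conjugacy
functor from offenders, working with $1$-cocycles for $\Z_\F^\Q$ via the bar
resolution and their associated ``rigid maps'' on $\Gamma^*$, and an induction
over well-placed subgroups (Theorem~\ref{T:ntlim}). When every minimal offender
\emph{is} solitary, the norm argument genuinely fails, and the paper instead
shows such offenders are transvections on $\Omega_1(D)$, invokes McLaughlin's
classification of irreducible transvection groups to force
$G\cong S_{m_1}\times\cdots\times S_{m_r}$ with each $m_i$ odd acting on a sum
of natural modules, and then finishes by an explicit cocycle computation inside
$S_{2n+1}$. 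None of this is visible in your proposal, and it is precisely where
the content lies; the slogan ``the offender supplies enough structure to kill
$H^2$'' does not capture either the dichotomy (solitary vs.\ not) or the
endgame appeal to McLaughlin.
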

\begin{proof}
When $p$ is odd, this follows from the proof of \cite[Theorem~3.4]{Oliver2013}
and Proposition~\ref{P:3.3odd} below. When $p=2$, it follows from the proof of
\cite[Theorem~3.4]{Oliver2013} and Proposition~\ref{P:3.3} below. 
\end{proof}

It was known very early that $\varprojlim^1_{\O(\F^c)} \Z_\F$ can be
nonvanishing when $p=2$. An example of this is given by the fusion system $\F$
of the alternating group $A_6$ at the prime $2$, where
$\varprojlim^1_{\O(\F^c)} \Z_\F$ is of order $2$ \cite[Proposition~1.6,
Ch.10]{Oliver2006}. 

\begin{theorem}[Chermak \cite{Chermak2013}]\label{C:main}
Each saturated fusion system has an associated centric linking system that is
unique up to isomorphism. 
\end{theorem}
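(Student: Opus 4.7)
The plan is to deduce Theorem~\ref{C:main} as a formal consequence of Theorem~\ref{T:main} together with the Broto--Levi--Oliver obstruction theory for centric linking systems developed in \cite[\S 3]{BrotoLeviOliver2003}. Recall that the BLO machinery attaches to each saturated fusion system $\F$ over a finite $p$-group two obstruction classes: the obstruction to the \emph{existence} of a centric linking system associated to $\F$ lies in $\varprojlim^3_{\O(\F^c)} \Z_\F$, and once at least one centric linking system exists, the set of isomorphism classes of such systems is a torsor for $\varprojlim^2_{\O(\F^c)} \Z_\F$, so vanishing of this second group yields \emph{uniqueness} up to isomorphism.

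With this framework in hand, the first step is simply to invoke Theorem~\ref{T:main}. For $p$ odd it asserts $\varprojlim^k_{\O(\F^c)} \Z_\F = 0$ for every $k \geq 1$, which covers both $k = 2$ and $k = 3$. For $p = 2$ the same theorem gives vanishing for every $k \geq 2$, which again covers $k = 2$ and $k = 3$. The second step is to read off the consequences: the vanishing at $k = 3$ forces the existence obstruction to be trivial, producing a centric linking system associated to $\F$, and the vanishing at $k = 2$ then forces any two centric linking systems associated to $\F$ to be isomorphic.

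Under this deductive outline there is no substantive remaining obstacle: the entire mathematical content of Theorem~\ref{C:main} has been absorbed into Theorem~\ref{T:main} (which itself rests on Chermak's filtration from \cite{Chermak2013}, its cohomological reformulation in \cite{Oliver2013}, and the classification-free inputs in Propositions~\ref{P:3.3odd} and~\ref{P:3.3} below). The only thing one genuinely has to check in this last step is a bookkeeping matter, namely that the orbit category $\O(\F^c)$ and the center functor $\Z_\F$ appearing in Theorem~\ref{T:main} are the same objects that govern the BLO obstructions, and that the indexing of the derived limits matches (existence in degree three, uniqueness in degree two). Once that identification is recorded, Theorem~\ref{C:main} follows by citation.
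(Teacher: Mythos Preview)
Your proposal is correct and matches the paper's own proof, which simply says the result follows from Theorem~\ref{T:main} together with \cite[Proposition~3.1]{BrotoLeviOliver2003}. You have merely unpacked that citation by recalling that the existence obstruction lives in degree~$3$ and the uniqueness obstruction in degree~$2$, both of which vanish by Theorem~\ref{T:main}.
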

\begin{proof} 
This follows from Theorem~\ref{T:main} together with
\cite[Proposition~3.1]{BrotoLeviOliver2003}.
\end{proof}


In addition to relying heavily on the reductions of Chermak and Oliver, our
arguments use variations on techniques of the first author for studying when,
for a finite group $G$ acting on an abelian $p$-group $D$, some subgroup $H$
\emph{controls fixed points} of $G$ on $D$ -- that is, when $C_D(H) = C_D(G)$.
In particular, very general conditions were given in
\cite[Theorem~A1.4]{Glauberman1971} under which this holds for a suitable
$p$-\emph{local subgroup} $H$ of $G$ (i.e., the normalizer of a nonidentity
$p$-subgroup). This general result is the basis for the statement, also found
in \cite{Glauberman1971}, that the normalizer of the Thompson subgroup
``controls weak closure of elements'' when $p$ is odd. We refer to \S14 of
\cite{Glauberman1971} for more details on this relationship.

For the purposes of this introduction, a finite group $\Gamma$ is said to be
$p$-\emph{constrained} if there is a normal $p$-subgroup $Y$ of $\Gamma$ such
that $C_\Gamma(Y) \leq Y$. (The usual definition of a $p$-constrained group is
more general \cite[p.268]{Gorenstein1980}.) The structure of such a group is
heavily influenced by the faithful action of $\Gamma/C_\Gamma(Z(Y))$ on the
abelian $p$-group $Z(Y)$.  

In order to explain how control of fixed points is helpful in computing limits
over orbit categories of $p$-constrained groups, we fix a $p$-constrained group
$\Gamma$ with normal $p$-subgroup $Y$ as above. Let $S$ be a Sylow $p$-subgroup
of $\Gamma$, set $\F = \F_S(\Gamma)$, and denote by $\sS(S)_{\geq Y}$ the
collection of all subgroups of $S$ containing $Y$ (each of which is
$\F$-centric). The filtrations of the center functor that feature in Chermak's
proof correspond to objectwise filtrations of the collection of $\F$-centric
subgroups. A collection $\Q \subseteq \sS(S)_{\geq Y}$ that is invariant under
$\F$-conjugacy and closed under passing to overgroups corresponds to a quotient
$\Z^\Q_\F$ of the center functor. For such a collection, one can form the
\emph{locality} $\L_\Q(\Gamma)$ (in the sense of \cite[2.10]{Chermak2013})
consisting of those $g \in \Gamma$ that conjugate a subgroup in the collection
$\Q$ to another subgroup in $\Q$.  Now with $\R = \sS(S)_{\geq Y} - \Q$,
an examination of the first part of the long exact sequence corresponding to
the short exact sequence $0 \to \Z_\F^\R \to \Z_\F^{\sS(S)_{\geq Y}} \to
\Z_\F^\Q \to 0$ of functors shows that
$\varprojlim{}^1 \Z_\F^\R = 0$ provided that the inclusion 
\[
\varprojlim{\!}^0 \Z_\F^{\sS(S)_{\geq Y}} \cong C_{Z(Y)}(\Gamma)
\hookrightarrow C_{Z(Y)}(\L_\Q(\Gamma)) \cong \varprojlim{\!}^0 \Z_\F^\Q
\]
is an isomorphism (see Lemma~\ref{L:oliexact} for more details). Hence
$\varprojlim{ }^1 \Z_\F^\R = 0$ provided some normalizer in $\Gamma$ of a
subgroup in the collection $\Q$ controls the fixed points of $\Gamma$ on
$Z(Y)$. This observation essentially completes the picture when $p$ is an odd
prime, as is shown in Section~\ref{S:podd}. 

For $p=2$, the canonical obstruction to having control of fixed points by a
$2$-local subgroup is the symmetric group $G = S_3$ acting on $D = C_2 \times
C_2$. Here the $2$-local subgroups are of order $2$ and have nontrivial fixed
points on $D$, while $G$ does not. The aim for $p=2$ is to isolate this
obstruction in the case where $D$ is an FF-module for $G$ -- that is, when $G$
has nontrivial \emph{offenders} on $D$ (Definition~\ref{D:offender} below). We
define the notion of a \emph{solitary offender}, which is an offender of order
$2$ having a specific type of embedding in $G$ with respect to a Sylow $2$-subgroup.
The standard example of a solitary offender is generated by a transposition in
an odd degree symmetric group acting on a natural module over the field with
two elements, where the transposition is, in particular, a transvection.  Away
from the obstruction posed by solitary offenders, control of fixed points by
the normalizer of a nonidentity normal -- but perhaps not characteristic --
subgroup of a Sylow $2$-subgroup is obtained in Section~\ref{S:norm2}. 

\begin{theorem}\label{T:solobs}
Let $G$ be a finite group, $S$ a Sylow $2$-subgroup of $G$, and $D$ an abelian
$2$-group on which $G$ acts faithfully. Assume that $G$ has a minimal
nontrivial offender on $D$ that is not solitary. Then there is a subgroup $J
\leq S$, generated by offenders and weakly closed in $S$ with respect to $G$,
such that $C_D(N_G(J)) = C_D(G)$. 
\end{theorem}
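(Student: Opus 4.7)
The plan is to adapt Glauberman's control of fixed points theorem \cite[Theorem~A1.4]{Glauberman1971}---the engine behind Proposition~\ref{P:3.3odd} in the odd-prime case---to $p=2$ by using the non-solitary hypothesis to block the obstructive configuration of $S_3$ acting on $C_2\times C_2$. The first task is to construct the weakly closed subgroup $J$. A natural candidate is to take a minimal non-solitary offender $A$ (which exists by hypothesis) and set
\[
J := \langle\, A^G \cap S \,\rangle,
\]
or, if necessary, enlarge this to the subgroup of $S$ generated by all $G$-conjugates of a carefully chosen family of minimal non-solitary offenders that fall inside $S$. Such a $J$ is weakly closed in $S$ with respect to $G$ by construction, is nontrivial by hypothesis, and is generated by offenders because each of its generators is $G$-conjugate to one.

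The bulk of the work is to prove $C_D(N_G(J)) = C_D(G)$, and I would do this by contradiction, passing to a minimal counterexample measured by $|G|+|D|$. Standard reductions---replacing $D$ by a minimal $G$-invariant subgroup on which $N_G(J)$ has proper fixed points, factoring out the kernel of the action, and trimming $G$ to be generated by $G$-conjugates of $J$ together with a point stabilizer---should bring the problem to a faithful, essentially irreducible action in which the existence of a $J$-fixed vector outside $C_D(G)$ is to be ruled out. Here the offender-generation of $J$ is decisive: a Timmesfeld-style replacement argument applied to such a vector should yield a subgroup $A^* \leq J$ and a $G$-section of $D$ on which $A^*$ acts as an order-$2$ transvection with a specific embedding in a Sylow $2$-subgroup.

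The main obstacle is the last step: producing the replacement offender $A^*$ so that its position in $S$ satisfies the precise Sylow-embedding condition in the definition of a solitary offender. Once this match is achieved, one reaches a contradiction: on the one hand, the extracted configuration forces $A^*$ to be solitary; on the other hand, every generator of $J$ is $G$-conjugate to a minimal non-solitary offender, so $A^*$ itself is non-solitary. This closes the minimal counterexample and yields the desired equality $C_D(N_G(J)) = C_D(G)$. The verification of the embedding---which amounts to showing that Sylow-position information survives the reductions in a controlled way---is the technical heart of the argument, and is where the distinction from Glauberman's original odd-prime proof is localized.
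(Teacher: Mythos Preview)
Your proposal differs substantially from the paper's route and, more importantly, has a genuine gap at the point you yourself flag as the ``technical heart''.

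The paper does not run a minimal-counterexample reduction on $(G,D)$ at all. It verifies the norm hypothesis of Theorem~\ref{T:glawc2} directly, splitting on the size of a minimal offender. If some minimal best offender has order at least $4$, or if there is any over-offender, one takes $\A=\A_D(G)^\circ_{\geq 4}\cup\hat{\A}_D(G)^\circ$ and $J=J_\A(S)$; Lemma~\ref{L:controlbig} shows \eqref{E:normcond2} holds by a two-line best-offender inequality, and Theorem~\ref{T:glawc2} finishes. Otherwise every minimal offender has order $2$ and $\hat{\A}_D(G)=\varnothing$; here Lemma~\ref{L:nooveroffenders} forces $J_{\A_D(G)_2}(S)$ to be elementary abelian, and Lemma~\ref{L:nonS3basednorm} shows that if the norm $\N_{J\cap H^g}^{J}$ is nontrivial on $\Omega_1(D)$ for $H=N_G(J_\B(S))$, then the unique $T\in\A_D(G)_2$ complementing $J\cap H^g$ in $J$ is forced to satisfy (S1)--(S3) and hence is solitary---contradicting $T\in\B$. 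No passage to subquotients of $D$ or of $G$ is involved; the argument is a concrete computation with the norm map and the decomposition $D=[D,L]\times C_D(L)$ coming from Lemma~\ref{L:nooveroffenders}(d).

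Your plan, by contrast, relies on reductions that the solitary notion does not survive. Being solitary is a property of $T$ relative to the triple $(G,S,D)$: it asks for an $L\cong S_3$ with $J_{\A_D(G)_2}(S)=T\times C_{J}(L)$ and a specific decomposition of $D$. When you shrink $D$ to a minimal $G$-invariant subgroup, pass to a $G$-section, or replace $G$ by a subgroup generated by conjugates of $J$ and a stabilizer, both $\A_D(G)_2$ and the Sylow-embedding conditions (S2)--(S3) change; an offender non-solitary for $(G,D)$ can become solitary---or cease to be an offender---after any of these moves. So the contradiction you aim for at the end (``$A^*$ is solitary on a section, but $A^*$ is $G$-conjugate to a non-solitary offender on $D$'') does not close: solitary-on-a-section is not the same as solitary-on-$D$, and conjugacy to a non-solitary offender on $D$ says nothing about behaviour on the section. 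This is exactly the mismatch you would have to repair, and I do not see a mechanism in your outline for doing so. The paper avoids the issue entirely by never leaving $(G,S,D)$ and instead proving directly that failure of the norm condition manufactures a solitary offender inside $J$.
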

\begin{proof}
This follows from Theorem~\ref{T:glawc2} via Lemmas~\ref{L:controlbig} and
\ref{L:nonS3basednorm}.
\end{proof}

This could be viewed as the main result of Section~\ref{S:norm2}, although the
more detailed information contained in Lemmas~\ref{L:controlbig} and
\ref{L:nonS3basednorm} is needed for the sequel. When interpreted,
Theorem~\ref{T:solobs} gives the vanishing of $\varprojlim{\!}^1 \Z^\R_\F$ for
$\F$ the fusion system of any $2$-constrained group $\Gamma$ as above with $D =
Z(Y)$ and $G = \Gamma/C_\Gamma(D)$ having the prescribed action on $D$, and
with $\R$ consisting of those subgroups containing $Y$ whose images in $G$ do
not contain offenders on $D$.

The results of Section~\ref{S:norm2} are then applied to upgrade the vanishing
of $\varprojlim{\!}^1$ to that of $\varprojlim{\!}^2$ away from the canonical
obstruction, and this is done in Theorem~\ref{T:ntlim}. For this and for the
remaining arguments, we work with the bar resolution for these limits.  As a
result, the arguments involve questions about realizing an automorphism of a
locality as an inner automorphism of a group, and thus begin to resemble those
appearing in Chermak's work \cite{Chermak2013}. We hope that the preliminary
lemmas of Section~\ref{S:reduction2} will make more clear this connection for
those who are more familiar with Chermak's group-theoretic approach.

With a little more work, it is seen that in a minimal counterexample, $G$ is
generated by its solitary offenders on $D$, and in particular by transvections
on $\Omega_1(D)$.  In this paper, by a \emph{natural module} for a symmetric
group $S_m$ ($m \geq 3$), we mean the lone nontrivial irreducible composition
factor of the standard permutation module for $S_m$ over the field with two
elements. This is of dimension $m-1$ when $m$ is odd, and of dimension $m-2$
when $m$ is even. 

The following is Theorem~\ref{T:genbyS3based} below.

\begin{theorem}\label{T:mainsolgen}
Let $G$ be a finite group and $D$ an abelian $2$-group on which $G$ acts
faithfully. Assume that $G$ has no nontrivial normal $2$-subgroups and that $G$
is generated by its solitary offenders on $D$. Then $G$ is a direct product of
symmetric groups of odd degree, and $D/C_D(G)$ is a direct sum of natural
modules.
\end{theorem}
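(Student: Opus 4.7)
The plan is to reduce to an elementary-abelian action and invoke the classification of $\mathbb{F}_2$-linear groups generated by transvections. For a solitary offender $t$ of order $2$, the offender condition forces $|[D, t]| \leq 2$; since $t$ acts trivially on the order-$\leq 2$ group $[D, t]$, one has $[D, t, t] = 1$, so $t$ is quadratic on $D$. In additive notation, $(t - 1)^2 = 0$ combined with $t^2 = 1$ yields $2(t - 1) = 0$, hence $[D, t] \leq \Omega_1(D)$ and $t$ is trivial on $D/\Omega_1(D)$. Because $G$ is generated by its solitary offenders, $G$ acts trivially on $D/\Omega_1(D)$, so $[D, G] \leq \Omega_1(D)$; the identity $[d^2, g] = 2[d, g]$ in an abelian $2$-group then yields $2D \leq C_D(G)$. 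Therefore $\bar D := D/C_D(G)$ is an elementary abelian $\mathbb{F}_2 G$-module on which each solitary offender induces either the identity or a transvection.

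Next I would show $G$ acts faithfully on $\bar D$. The kernel $K$ of $G \to \operatorname{GL}(\bar D)$ is normal in $G$ and consists of elements $k$ with $[D, k] \leq C_D(G)$. Using the quadratic action of solitary offenders together with the hypothesis that $G$ is generated by them, one argues that $K$ is a $2$-group; hence $K \leq O_2(G) = 1$, and $G$ embeds into $\operatorname{GL}(\bar D)$ as a subgroup generated by transvections.

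Now decompose $\bar D$ into its $G$-indecomposable summands and apply the McLaughlin classification of transvection-generated subgroups of $\operatorname{GL}_n(2)$ to each summand. After discarding the solvable exceptions via $O_2(G) = 1$, the possible simple components of $G$ are $\operatorname{SL}_n(2)$, $\operatorname{Sp}_{2n}(2)$, the orthogonal groups $O^\pm_{2n}(2)$, and the symmetric groups $S_m$ acting on natural modules. The \emph{solitary} condition on offenders --- requiring each offender to be the unique element with a prescribed image in a coset of a specified normal subgroup of a chosen Sylow $2$-subgroup --- is then used to eliminate all non-symmetric classical groups (whose Sylow $2$-subgroups contain several noncentral root-type involutions in the same coset) and also the even-degree symmetric groups $S_{2m}$ (where two disjoint transpositions in a Sylow violate the coset uniqueness). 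Only odd-degree symmetric groups $S_{m_i}$ with $m_i \geq 3$ survive, each acting on a natural module $V_i$; since simple components act on orthogonal summands of $\bar D$, they commute, yielding $G = \prod_i S_{m_i}$ with $\bar D = \bigoplus_i V_i$ as required.

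The principal obstacle will be the case-by-case elimination of the non-symmetric classical groups and of even-degree symmetric groups using the solitary hypothesis, which requires a careful examination of how root-type involutions distribute inside a Sylow $2$-subgroup in each case; the preliminary reduction to $\bar D$ and the verification that solitary offenders act as transvections there are routine consequences of the quadratic action of order-$2$ offenders on an abelian $2$-group.
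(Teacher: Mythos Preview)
Your reduction to an elementary abelian quotient is fine, and the observation that each solitary offender has $[D,t]$ of order $2$ and hence acts as a transvection on $\bar D = D/C_D(G)$ is correct and actually more direct than what the paper does. The faithfulness of $G$ on $\bar D$ also follows (though not via ``quadratic action'': the point is simply that the kernel acts trivially on both $C_D(G)$ and $D/C_D(G)$, hence is a normal $2$-subgroup, hence trivial).

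The real gap is the passage from $\bar D$ to an irreducible module. You write ``decompose $\bar D$ into its $G$-indecomposable summands and apply McLaughlin to each summand'', but McLaughlin's theorem concerns \emph{irreducible} subgroups of $\SL_n(2)$, and over $\mathbb{F}_2$ an indecomposable $G$-module need not be irreducible. More seriously, even granting a decomposition $\bar D = \bigoplus U_j$, there is no reason \emph{a priori} that the image of $G$ on a given $U_j$ is generated by transvections, nor that distinct ``simple components'' of $G$ act on distinct summands --- your final sentence assumes exactly the direct-product structure you are trying to prove. The paper handles this via a minimal-counterexample argument: given a proper nonzero submodule $W_1$ of $W = [\bar D,G]$, each transvection $T$ satisfies either $[W,T]\leq W_1$ or $[W_1,T]=1$ (because $[W,T]$ has order $2$); this partitions the solitary offenders into two sets generating normal subgroups $G_1,G_2$ whose intersection acts nilpotently on $W>W_1>1$ and is therefore trivial, giving $G=G_1\times G_2$ and $W=[W,G_1]\times[W,G_2]$, whence induction applies. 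This step has content and cannot be replaced by Krull--Schmidt.

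A secondary concern: your paraphrase of ``solitary'' as a coset-uniqueness condition does not match the paper's Definition~\ref{D:S3based}, which requires an $S_3$-subgroup $L\geq T$ with $J=T\times C_J(L)$, $C_J(L)=\langle(\A\cap S)-\{T\}\rangle$, and a compatible splitting $D=[D,L]\times C_D(L)$. The case-by-case elimination of $\SL_n(2)$, $Sp_{2n}(2)$, $O^{\pm}_{2n}(2)$, and even-degree $S_m$ in the paper's Lemma~\ref{L:soltrans} uses this definition explicitly (for each candidate $L$ one exhibits an offender in $S$ not centralised by $L$); your sketch of how to rule these out should be rewritten against the actual definition.
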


Thus, ultimately, an appeal to the General FF-module Theorem is replaced by an
appeal (in the proof of Theorem~\ref{T:mainsolgen}) to McLaughlin's
classification of irreducible subgroups of $SL_n(2)$ generated by transvections
\cite{McLaughlin1969}. 

The article proceeds as follows. In Section~\ref{S:limits} we state the
required definitions and background results in fusion systems and homological
algebra. Our results on control of fixed points are based on
\cite[Theorem~A1.4]{Glauberman1971}, to which we refer as the \emph{norm
argument}; this is introduced in Section~\ref{S:podd}. We use it there to
handle the case where $p$ is odd. In Section~\ref{S:norm2}, we state a modified
form of the norm argument and show that applies to many cases when $p=2$. In
Section~\ref{S:reduction2}, we reduce to the case in which all minimal
offenders are solitary, and we handle that case in
Section~\ref{S:transvections}.  Appendix \ref{A:norm2} provides a proof of the
modified norm argument of Section~\ref{S:norm2}, while Appendix \ref{A:groups}
provides various miscellaneous group-theoretic results, including a discussion
of conjugacy functors and their well-placed subgroups. 

\subsection*{Notation}
Conjugation maps and morphisms in fusion systems will be written on the right
and in the exponent, while cocycles and cohomology classes for functors will be
written on the left.  Groups of cochains are written multiplicatively. However,
on some occasions we express that a group, or a cocycle or a cohomology class,
is trivial by saying that it is equal to $0$. 

\section*{Acknowledgements}
We would like to extend our thanks to an anonymous referee, whose careful
reading led to numerous improvements in the mathematics and the exposition of
this paper. In particular, we are grateful to the referee for pointing out an
error in a previous version of Lemma~\ref{L:SSpermlocal}.  It is our pleasure
to thank the Department of Mathematics at the University of Chicago for
supporting the visits of the second author, during which this work began. The
second author thanks the department for its hospitality during his stays.
Finally we would like to thank Andrew Chermak and Bob Oliver for their comments
on an earlier draft of this paper. 

\section{Functors on orbit categories}\label{S:limits}

In this section, we recall the definition of a saturated fusion system and some
terminology and constructions from homological algebra that are needed. Our
notation follows \cite{Oliver2013}, and we also recall here some of the
preliminary lemmas on cohomology that we use from that paper. Since we will
have very little explicit need for the theory of fusion or linking systems, we
refer to \cite{AschbacherKessarOliver2011} for the basic properties that may be
tangential to the concerns here.

All groups in this paper are finite. Let $G$ be a group and $g \in G$.  The
conjugation homomorphism induced by $g$ (and its restrictions) is written
$c_g\colon x \mapsto x^g = g^{-1}xg$. Images of morphisms in fusion systems are
also written on the right and in the exponent; for example, the image of $x$
under the morphism $\varphi$ is denoted $x^\varphi$, by analogy with
conjugation.  Given a group $G$ and subgroups $P$, $Q$, we write $\Hom_G(P,Q)$
for the set $\{c_g \mid g \in G \text{ and } P^g \leq Q\}$. The notation
$\Aut_G(P)$ means $\Hom_G(P,P)$; a similar notation is used within categories
employed in this paper. 

\begin{definition}
Let $S$ be a finite $p$-group. A \emph{fusion system} over $S$ is a category
$\F$ with objects the subgroups of $S$ and with morphisms injective group
homomorphisms between subgroups, satisfying
\begin{enumerate}
\item[(1)] $\Hom_S(P,Q) \subseteq \Hom_\F(P,Q)$; and
\item[(2)] each morphism in $\F$ factors as an isomorphism in $\F$ followed
by an inclusion.
\end{enumerate}
A subgroup $Q \leq S$ is \emph{fully $\F$-normalized} if $N_S(Q)$ has largest
order among all such normalizers of subgroups that are $\F$-isomorphic to $Q$.
Similarly, $Q$ is \emph{fully $\F$-centralized} if $C_S(Q)$ has largest order
among all such centralizers of subgroups that are $\F$-isomorphic to $Q$. 
\smallskip
\noindent
The fusion system $\F$ is \emph{saturated} if the following two axioms hold:
\begin{enumerate}
\item[(I)] (Sylow Axiom) each subgroup $Q \leq S$ that is fully $\F$-normalized
is also fully $\F$-centralized, and $\Aut_S(Q)$ is a Sylow $p$-subgroup of
$\Aut_\F(Q)$ in this case; and
\item[(II)] (Extension Axiom) for each fully $\F$-centralized $Q \leq S$, and
each isomorphism $\varphi\colon P \to Q$ in $\F$, if we set 
\[
N_\varphi = \{ s \in N_S(P) \mid \varphi^{-1} c_s \varphi \in \Aut_S(Q) \},
\]
then there exists $\tilde{\varphi} \in \Hom_\F(N_{\varphi}, S)$ such that
$\tilde{\varphi}|_{P} = \varphi$. 
\end{enumerate}
\end{definition}

Fix a saturated fusion system $\F$ over a finite $p$-group $S$. Then two
subgroups of $S$ are $\F$-\emph{conjugate} if they are isomorphic in $\F$.  The
subgroup $P$ is $\F$-\emph{centric} if $C_S(Q) \leq Q$ for every subgroup $Q$
of $S$ that is $\F$-conjugate to $P$.  We write $\F^f$ and $\F^c$ for the
collection of fully $\F$-normalized and the collection of $\F$-centric
subgroups of $S$, respectively. We also use $\F^c$ to denote the full
subcategory of $\F$ with objects the $\F$-centric subgroups. By the Fusion
Theorem of Alperin-Goldschmidt \cite[Theorem~A.10]{BrotoLeviOliver2003}, the
subcategory of $\F$-centrics ``generates'' all $\F$-conjugacy, and so there is
usually no loss of information in restricting attention to this subcategory.

Since morphisms are written on the right, $\Hom_\F(P,Q)$ has a right action by
$\Inn(Q)$ for each pair of subgroups $P,Q \leq S$. The \emph{$\F$-centric orbit
category} $\O(\F^c)$ has as objects the set $\F^c$ and as
morphisms the orbits under this action: 
\[ 
\Mor_{\O(\F^c)}(P,Q) = \Hom_\F(P,Q)/\Inn(Q).  
\] 
The class in the orbit category of a morphism $\phi \in \Hom_\F(P,Q)$ is
denoted by $[\phi]$. 

The \emph{center functor} is the contravariant functor $\Z_\F\colon \O(\F^c)
\to \Ab$ defined by $\Z_\F(P) = Z(P)$ on objects. For a morphism $\phi\colon P
\to Q$ in $\F$, $\Z_\F([\phi])$ is the map from $Z(Q) = C_S(Q) \leq C_S(P^\phi)
= Z(P^\phi)$ to $Z(P)$ induced by $\phi^{-1}$. It is necessary to restrict
objects, for example to the centrics as described here, in order that taking
centers in this way determines a functor.

Filtrations of the collection of subgroups of $S$ yield useful filtrations of
the center functor by subquotient functors.  Denote by $\sS(S)$ the set of
subgroups of $S$, and by $\sS(S)_{\geq Y}$ the subset of those that contain a
fixed subgroup $Y \leq S$.

\begin{definition}
A collection $\R \subseteq \sS(S)$ is an \emph{interval} if $P \in \R$ whenever
$P_1, P_2 \in \R$ and $P_1 \leq P \leq P_2$. An interval $\R$ is
$\F$-\emph{invariant} if $P^\phi \in \R$ whenever $P \in \R$ and $\phi \in
\Hom_\F(P, S)$.
\end{definition}

Given an $\F$-invariant interval $\R \subseteq \F^c$, define the functor
$\Z_\F^\R\colon \O(\F^c) \to \Ab$ by $\Z_\F^\R(P) = Z(P)$ whenever $P \in \R$,
and by $1$ otherwise. Then $\Z^{\R}_\F$ is a subfunctor of $\Z_\F$ when $\R$ is
closed under passing to (centric) subgroups, and a quotient functor of $\Z_\F$
when $\R$ is closed under passing to overgroups in $S$ -- that is, when $S \in
\R$. 

Following \cite{Oliver2013}, we write 
\[
L^k(\F; \R) := \varprojlim{\!}^k_{\O(\F^c)} \Z^\R_\F
\]
for the higher derived limits of these functors, and we think of them as
cohomology groups of the category $\O(\F^c)$ with coefficients in the functor
$\Z^{\R}_\F$. They are cohomology groups of a certain cochain complex
$C^*(\O(\F^c);\Z^\R_\F)$, in which $k$-cochains are maps from sequences of $k$
composable morphisms in the category. A $0$-cochain $u$ is a map sending $P \in
\F^c$ to an element $u(P) \in \Z^\R_\F(P)$, and a $1$-cochain $t$ is a map
sending a morphism $P \xrightarrow{[\phi]} Q$ to an element $t([\phi]) \in
\Z^\R_\F(P)$. We will be working in \S\ref{S:reduction2} with cochains for
$\Z_\F^\Q$ in the case where $\Q$ is closed under passing to overgroups.  With
our notational conventions, the coboundary maps on such $0$- and $1$-cohains in
this special case are as follows:
\begin{align}
du([\phi]) &= 
\begin{cases}
\label{E:0cob}
u(Q)^{\phi^{-1}}u(P)^{-1} & \text{if $P \in \Q$, and}\\
1  & \text{otherwise; and}
\end{cases}\\
dt([\phi][\psi]]) &= 
\begin{cases} 
\label{E:1cob}
t([\psi])^{\phi^{-1}}t([\phi\psi])^{-1}t([\phi]) & \text{if $P \in \Q$, and}\\
1  & \text{otherwise,}
\end{cases}
\end{align}
for any sequence $P \xrightarrow{\phi} Q \xrightarrow{\psi} R$ of composable
morphisms in $\F^c$.  We refer to \cite[\S III.5.1]{AschbacherKessarOliver2011}
for more details on the bar resolution for these functors. 


\begin{definition} 
A \textit{general setup for the prime $p$} is a triple $(\Gamma, S, Y)$ where
$\Gamma$ is a finite group, $S$ is a Sylow $p$-subgroup of $\Gamma$, and $Y$ is
a normal $p$-subgroup of $\Gamma$ such that $C_\Gamma(Y) \leq Y$. A
\textit{reduced setup} is a general setup such that $Y = C_S(Z(Y))$ and
$O_p(\Gamma/C_\Gamma(Z(Y))) = 1$.
\end{definition}

We next state the three preliminary lemmas from \cite{Oliver2013} that are
needed later. 

\begin{lemma}\label{L:olijm}
Let $\F$ be a saturated fusion system over a $p$-group $S$, and let $\Q
\subseteq \F^c$ be an $\F$-invariant interval such that $S \in \Q$. Let $\F_\Q$
be the full subcategory of $\F$ with object set $\Q$, and denote by
$|_\Q$ the restriction of a functor to $\F_\Q$.
\begin{enumerate}
\item[(a)] The inclusion $\F_\Q \to \F^c$ induces an isomorphism of cochain
complexes $C^*(\O(\F^c); \Z_\Q)$ $\xrightarrow{\cong}$ $C^*(\O(\F_\Q);
\Z_\F^\Q|_\Q)$, and in particular an isomorphism 
\[
L^*(\F; \Q) \xrightarrow{\cong} L^*(\F_\Q; \Q) :=
\varprojlim{\!}^*_{\O(\F_\Q)} \Z^\Q_\F|_{\Q}.
\]
\item[(b)] If $(\Gamma, S, Y)$ is a general setup, $\F = \F_S(\Gamma)$, and $\Q
= \sS(S)_{\geq Y}$, then 
\[
L^k(\F;\Q) =
\begin{cases}
C_{Z(Y)}(\Gamma) & \text{if $k = 0$; and}\\
0 & \text{otherwise.}
\end{cases}
\]
\end{enumerate}
\end{lemma}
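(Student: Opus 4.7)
Part (a) is formal. The inclusion $\F_\Q \hookrightarrow \F^c$ induces a functor on orbit categories and hence a restriction of cochain complexes $C^*(\O(\F^c); \Z_\F^\Q) \to C^*(\O(\F_\Q); \Z_\F^\Q|_\Q)$. A $k$-cochain $c$ on the left sends a chain $P_0 \xrightarrow{[\phi_1]} P_1 \to \cdots \to P_k$ to an element of $\Z_\F^\Q(P_0)$, which is trivial unless $P_0 \in \Q$. If $P_0 \in \Q$, then $\F$-invariance of $\Q$ gives $P_0^{\phi_1} \in \Q$, and $P_1 \in \Q$ follows from the interval property because $P_0^{\phi_1} \leq P_1 \leq S \in \Q$. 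Iterating, every $P_i$ lies in $\Q$, so the entire chain lies in $\O(\F_\Q)$. The restriction is therefore bijective at each degree and is manifestly compatible with the coboundary formulas \eqref{E:0cob}--\eqref{E:1cob} (and their higher-degree analogues), giving the claimed isomorphism of cochain complexes and hence of higher limits.

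For (b), by (a) we compute inside $\O(\F_\Q)$. First, $Y$ is $\F$-centric because $C_S(Y) \leq C_\Gamma(Y) \leq Y$, so $Y \in \Q$; and since $Y \triangleleft \Gamma$, every morphism in $\F_\Q$ is induced by $\Gamma$-conjugation. A $0$-cocycle $u$ assigns $u(P) \in Z(P)$ with $u(Q)^{\phi^{-1}} = u(P)$ for each $\phi\colon P \to Q$. Taking $P = Q = Y$ with $\phi$ ranging over $\Aut_\Gamma(Y)$ forces $u(Y) \in C_{Z(Y)}(\Gamma)$, and taking $\phi$ to be the inclusion $Y \hookrightarrow P$ forces $u(P) = u(Y)$. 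Conversely, any $z \in C_{Z(Y)}(\Gamma)$ lies in each $Z(P)$ (it belongs to $Y \leq P$ and is centralized by $\Gamma \geq P$), so defines a consistent $0$-cocycle via $u(P) = z$. Thus $L^0(\F;\Q) \cong C_{Z(Y)}(\Gamma)$.

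For $k \geq 1$, the vanishing is essentially the Jackowski--McClure theorem \cite{JackowskiMcClure1992}, transcribed to fusion systems. Its proof constructs a contracting chain homotopy $s\colon C^k \to C^{k-1}$ on the (normalized) bar complex using the canonical inclusions $\iota_P\colon Y \hookrightarrow P$: informally, $s$ prepends $[\iota_{P_0}]$ to a chain and transports coefficients via $\iota_{P_0}^{-1}$. The main obstacle is that $Y$ is not strictly initial in $\O(\F_\Q)$, since its automorphism group there is $\Aut_\Gamma(Y)/\Inn(Y) \cong \Gamma/Y$, generally nontrivial; so the identity $ds + sd = \id$ must be verified with care, accounting for the nontrivial twisting by elements of $\Gamma/Y$. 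Alternatively, one identifies $\O(\F_\Q)$ with a suitable transporter-style category and applies a Grothendieck spectral sequence, reducing the vanishing to acyclicity of the associated poset of subgroups of $S/Y$.
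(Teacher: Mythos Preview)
The paper does not actually prove this lemma; it cites \cite[Lemma~1.6]{Oliver2013}. Your argument for (a) and for the $k=0$ case of (b) is correct and supplies more detail than the paper records. For $k\geq 1$ in (b), however, the contracting-homotopy sketch does not work as written, and the defect is more basic than the non-initiality of $Y$ that you flag. The functor $\Z_\F^\Q$ is contravariant, so a cochain evaluated on a chain beginning $Y\xrightarrow{[\iota_{P_0}]}P_0\to\cdots$ lands in $Z(Y)$, whereas the putative value $(sc)(P_0\to\cdots\to P_{k-1})$ must lie in $Z(P_0)$; there is no natural map $Z(Y)\to Z(P_0)$ to serve as the ``transport via $\iota_{P_0}^{-1}$'' you invoke. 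The ``prepend an initial object'' homotopy contracts the bar complex for \emph{covariant} coefficient functors; for contravariant ones one would need a terminal object, and $S$ is not terminal in $\O(\F_\Q)$ either.

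Your alternative route is the correct one and is essentially what \cite{Oliver2013} does. Since every $P\in\Q$ contains $Y\norm\Gamma$ and $C_\Gamma(Y)\leq Y\leq P$, one checks that $\Mor_{\O(\F_\Q)}(P,Q)\cong N_\Gamma(P,Q)/Q$, so $\O(\F_\Q)$ is exactly the orbit category of $\Gamma$ on the family of subgroups containing $Y$. The vanishing of higher limits of $P\mapsto Z(P)$ over this category is then the content of \cite{JackowskiMcClure1992}: the functor is a Mackey functor in their sense, so its higher limits vanish and the zeroth limit is $C_{Z(Y)}(\Gamma)$. For the purposes of this paper a citation is what is expected, and that is precisely what the authors give.
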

\begin{proof}
This is Lemma~1.6 of \cite{Oliver2013}, with the additional information in part
(a) shown in its proof.
\end{proof} 

Part (b) of the following lemma gave us the first concrete indication that
questions regarding control of fixed points by $p$-local subgroups would be
relevant to Theorem~\ref{T:main}. It is the starting point for nearly all the
arguments to follow.

\begin{lemma}\label{L:oliexact}
Let $\F$ be a saturated fusion system over a $p$-group $S$. Let $\Q, \R
\subseteq \F^c$ be $\F$-invariant intervals such that 
\begin{enumerate}
\item[(i)] $\Q \cap \R = \varnothing$,
\item[(ii)] $\Q \cup \R$ is an interval,
\item[(iii)] $Q \in \Q$, $R \in \R$ implies $Q \nleq R$. 
\end{enumerate}
Then $\Z_\F^\R$ is a subfunctor of $\Z_\F^{\Q \cup \R}$, $\Z^{\Q \cup
\R}_\F/\Z_\F^\R \cong \Z_\F^\Q$, and there is a long exact sequence
\begin{align*}
0 &\lra L^0(\F;\R) \lra L^0(\F; \Q \cup \R) \lra L^0(\F;\Q) \lra \cdots \\
 &\lra L^{k-1}(\F;\Q) \lra L^k(\F;\R) \lra L^k(\F; \Q \cup \R) \lra L^k(\F;\Q) \lra \cdots.
\end{align*}
In particular, if $(\Gamma, S, Y)$ is a general setup, $D = Z(Y)$, $\F =
\F_S(\Gamma)$ and $\Q \cup \R = \sS(S)_{\geq Y}$, then 
\begin{enumerate}
\item[(a)] $L^{k-1}(\F;\Q) \cong L^k(\F;\R)$ for each $k \geq 2$, and 
\item[(b)] there is a short exact sequence
\[
1 \lra C_D(\Gamma) \lra C_D(\Gamma^*) \lra L^1(\F;\R) \lra 1,
\]
where $\Gamma^*$ is the set of $g \in \Gamma$ such that there exists $Q \in \Q$
with $Q^g \in \Q$.
\end{enumerate}
\end{lemma}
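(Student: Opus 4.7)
The plan is to establish the short exact sequence of contravariant functors
\[
0 \lra \Z_\F^\R \lra \Z_\F^{\Q \cup \R} \lra \Z_\F^\Q \lra 0
\]
and then extract the full statement from the induced long exact sequence of higher inverse limits. Objectwise, at each $P \in \F^c$ at least one of $\Z_\F^\R(P)$ and $\Z_\F^\Q(P)$ is trivial, so exactness on objects is immediate: if $P \in \R$ then $\Z_\F^\Q(P) = 1$, while if $P \in \Q$ or $P \notin \Q \cup \R$ then $\Z_\F^\R(P) = 1$. The substantive point is naturality, and here the key observation is that \emph{no morphism of $\O(\F^c)$ goes from a $\Q$-object to an $\R$-object}: given $[\phi]\colon P \to R$ with $P \in \Q$ and $R \in \R$, the image $P^\phi$ lies in $\Q$ by $\F$-invariance of $\Q$, while $P^\phi \leq R$, directly contradicting hypothesis (iii). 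With this in hand, naturality of the inclusion and projection is straightforward.

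The long exact sequence of the statement is then standard homological algebra for higher limits of abelian-group-valued functors on a small category. For the general setup in (a) and (b), Lemma~\ref{L:olijm}(b) gives $L^0(\F;\Q \cup \R) = C_{Z(Y)}(\Gamma)$ and $L^k(\F;\Q \cup \R) = 0$ for $k \geq 1$. Plugging this into the long exact sequence immediately yields part (a): for $k \geq 2$ the terms $L^{k-1}(\F;\Q \cup \R) = L^k(\F;\Q \cup \R) = 0$ squeeze out the isomorphism $L^{k-1}(\F;\Q) \cong L^k(\F;\R)$.

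For part (b) the crux is to identify $L^0(\F;\Q)$ with $C_D(\Gamma^*)$, and this is the step I expect to require the most care, since $\Gamma^*$ is only a subset of $\Gamma$ and one must unpack the cocycle conditions directly from the bar resolution. First, $S \in \Q$: otherwise $S \in \R$, and then any $P \in \Q$ satisfies $P \leq S \in \R$, contradicting (iii). A $0$-cocycle $u$ for $\Z_\F^\Q$ assigns $u(P) \in Z(P)$ to each $P \in \Q$ (and $1$ elsewhere). Applying the cocycle condition~(\ref{E:0cob}) to the inclusion $P \hookrightarrow S$ for each $P \in \Q$ forces $u(P) = u(S) =: d$, and since $C_\Gamma(Y) \leq Y$ we have $Z(S) \leq C_S(Y) \leq Y$, hence $d \in Z(S) \leq Z(Y) = D$. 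The remaining nontrivial cocycle conditions read $d^g = d$ whenever $[c_g]\colon P \to Q$ is a morphism in $\F$ with $P, Q \in \Q$. Since $Y \normal \Gamma$ forces $Y \leq P^g$, and then (iii) forces $P^g \in \Q$ as soon as $P^g \leq Q \in \Q$, the set of such $g$ is precisely $\Gamma^*$. This gives $L^0(\F;\Q) \cong C_D(\Gamma^*)$; under this identification together with $L^0(\F;\Q \cup \R) = C_D(\Gamma)$, the connecting map of the long exact sequence becomes the natural inclusion $C_D(\Gamma) \hookrightarrow C_D(\Gamma^*)$. Combining with $L^1(\F;\Q \cup \R) = 0$ extracts the short exact sequence of part (b) from the long exact sequence.
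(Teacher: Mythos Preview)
Your argument is correct and supplies the details the paper defers to \cite[Lemma~1.7]{Oliver2013}; the structure (short exact sequence of functors, long exact sequence, then Lemma~\ref{L:olijm}(b) for the general-setup consequences) is exactly the intended one.

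One small slip: in identifying $L^0(\F;\Q)$ with $C_D(\Gamma^*)$, you write that ``(iii) forces $P^g \in \Q$ as soon as $P^g \leq Q \in \Q$''. Condition (iii) only forbids $\Q$-elements from lying \emph{below} $\R$-elements, not the reverse, so it does not directly give this. The correct reason is $\F$-invariance of $\Q$: since $P^g \leq Q \leq S$, the map $c_g \in \Hom_\F(P,S)$, and $P \in \Q$ gives $P^g \in \Q$. (You used exactly this combination of $\F$-invariance and (iii) correctly a few lines earlier to rule out morphisms from $\Q$-objects to $\R$-objects.) The conclusion and the identification $L^0(\F;\Q) \cong C_D(\Gamma^*)$ are unaffected.
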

\begin{proof}
This is nearly Lemma~1.7 of \cite{Oliver2013}. Our part (a) is stated in the
situation of a general setup, and it follows from the long exact sequence and
Lemma~\ref{L:olijm}(b).
\end{proof}

\begin{lemma}\label{L:olirestinj}
Let $(\Gamma, S, Y)$ be a general setup, $\Gamma_0$ a normal subgroup of
$\Gamma$ containing $Y$, and $S_0 = S \cap \Gamma_0$. Set $\F = \F_S(\Gamma)$
and $\F_0 = \F_{S_0}(\Gamma_0)$. Let $\Q \subseteq \sS(S)_{\geq Y}$ be an
$\F$-invariant interval such that $S \in \Q$, and such that $\Gamma_0 \cap Q
\in \Q$ whenever $Q \in \Q$. Set $\Q_0 = \{Q \in \Q \mid Q \leq \Gamma_0\}$.
Then restriction induces an injection
\[
L^1(\F;\Q) \lra L^1(\F_0; \Q_0).
\]
\end{lemma}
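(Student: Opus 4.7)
The plan is to identify $L^1(\F;\Q)$ and $L^1(\F_0;\Q_0)$ with the cohomologies of the full subcategories $\F_\Q$ and $\F_{0,\Q_0}$ via Lemma~\ref{L:olijm}(a), so that restriction corresponds to restricting 1-cocycles along the inclusion $\F_{0,\Q_0} \hookrightarrow \F_\Q$. Let $t$ be a 1-cocycle on $\F_\Q$ with $t|_{\F_{0,\Q_0}} = du_0$ for a 0-cochain $u_0$. First, I extend $u_0$ to a 0-cochain $\bar u$ on $\F_\Q$ by setting $\bar u(P) = u_0(P)$ for $P \in \Q_0$ and $\bar u(P) = 1$ otherwise, and replace $t$ by $t(d\bar u)^{-1}$; this reduces to the case where $t$ vanishes identically on $\F_{0,\Q_0}$. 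The goal is then to produce a 0-cochain $v$ on $\F_\Q$ with $dv = t$.

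For each $P \in \Q$ set $P_0 := P \cap \Gamma_0 \in \Q_0$ (using the hypothesis on $\Q$), and let $\iota_P : P_0 \hookrightarrow P$ denote the inclusion, a morphism in $\F_\Q$. I propose $v(P) := t([\iota_P]) \in Z(P_0)$, with $v(P) = 1$ when $P \in \Q_0$. The main obstacle is to verify that $v(P)$ lies in $Z(P)$, not merely $Z(P_0)$; since $P_0 \norm P$ and $P_0$ is $\F$-centric, one has $Z(P) = C_{Z(P_0)}(P)$, so this reduces to showing that $t([\iota_P])$ is invariant under $P$-conjugation on $Z(P_0)$. For $g \in P$, applying the cocycle identity to $P_0 \xrightarrow{c_g|_{P_0}} P_0 \xrightarrow{\iota_P} P$, and using $[c_g|_{P_0}\iota_P] = [\iota_P c_g] = [\iota_P]$ in $\O(\F_\Q)$ (since $c_g \in \Inn(P)$), yields an identity of the form
\[
t([\iota_P])^{g} = t([\iota_P]) \cdot t([c_g|_{P_0}])^{-1},
\]
so $P$-invariance of $t([\iota_P])$ is equivalent to $t([c_g|_{P_0}]) = 1$ for all $g \in P$. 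For $g \in P_0$ this is immediate, since $c_g|_{P_0}$ is inner in $P_0$.

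The hard case is $g \in P \setminus P_0$, where $c_g|_{P_0}$ represents a morphism in $\F_\Q$ not coming from $\F_{0,\Q_0}$. I would argue by induction on $[P:P_0]$. The inductive hypothesis provides $v(P') \in Z(P')$ for every $P_0 \leq P' < P$ with $P' \in \Q$ (such $P'$ lies in $\Q$ by the interval property and satisfies $P' \cap \Gamma_0 = P_0$); the cocycle identity on $P_0 \xrightarrow{\iota_{P'}} P' \xrightarrow{\iota^{P'}_P} P$ then shows $v(P) = t([\iota^{P'}_P]) \cdot v(P') \in Z(P')$. When $P/P_0$ is non-cyclic, $P$ is covered by its proper subgroups $P'$ containing $P_0$, so $v(P) \in \bigcap_{P'} Z(P') = Z(P)$. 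The cyclic case (especially $[P:P_0] = p$) is the crux: here one exploits $(c_g|_{P_0})^{p^k} = c_{g^{p^k}}|_{P_0}$ with $g^{p^k} \in P_0$, and combines the higher composition cocycle identity with the $g^p$-invariance already obtained to pin down $t([c_g|_{P_0}])$. Once $v(P) \in Z(P)$ is verified, the identity $dv = t$ follows from a direct cocycle computation on $P_0 \xrightarrow{\phi|_{P_0}} Q_0 \xrightarrow{\iota_Q} Q$ for any $\phi : P \to Q$ in $\F_\Q$, using $\iota_P \phi = \phi|_{P_0} \iota_Q$ and the vanishing of $t$ on the $\F_{0,\Q_0}$-part of $\phi|_{P_0}$.
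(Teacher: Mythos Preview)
The paper does not give its own argument here; it simply cites \cite[Lemma~1.13]{Oliver2013}. So there is no proof in the paper to compare against, only the question of whether your sketch stands on its own.

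There is a genuine gap. After your reduction, $t$ vanishes on morphisms of $\F_{0,\Q_0}$, i.e.\ on maps $P_0 \to Q_0$ induced by elements of $\Gamma_0$. But both remaining steps require $t$ to vanish on \emph{all} morphisms in $\F_\Q$ between objects of $\Q_0$, which is strictly more. For the check $v(P)\in Z(P)$ you need $t([c_g|_{P_0}])=1$ for $g\in P\setminus P_0$; since $C_\Gamma(P_0)\le Y\le \Gamma_0$, the map $c_g|_{P_0}$ determines the coset $g\Gamma_0$, so for $g\notin\Gamma_0$ this morphism does \emph{not} lie in $\F_0$ and your vanishing hypothesis says nothing about it. Likewise, in the final step you need $t([\phi|_{P_0}])=1$ for arbitrary $\phi\colon P\to Q$ in $\F_\Q$; writing $\phi=c_g$, the restriction $\phi|_{P_0}$ is again typically outside $\F_0$, and your phrase ``the $\F_{0,\Q_0}$-part of $\phi|_{P_0}$'' does not correspond to any factorisation you have produced.

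Your proposed fix in the cyclic case does not close the gap. The identity $(c_g|_{P_0})^{p}=c_{g^{p}}|_{P_0}$ together with the cocycle relation gives only
\[
\prod_{i=0}^{p-1} t([c_g|_{P_0}])^{g^{-i}} = t([c_{g^p}|_{P_0}]) = 1,
\]
that is, the norm of $t([c_g|_{P_0}])$ from $Z(P_0)$ to $C_{Z(P_0)}(P)$ is trivial. This does not force $t([c_g|_{P_0}])=1$: already for $p=2$, if $g$ swaps the two factors of $Z(P_0)\cong C_2\times C_2$, then every $g$-fixed element $w$ satisfies $w\cdot w^{g}=w^2=1$ without being $1$. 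The vague appeal to ``$g^p$-invariance'' adds nothing, since $v(P)\in Z(P_0)$ is automatically $g^p$-invariant. A correct argument must bring in more structure --- for instance the Frattini factorisation $\Gamma=\Gamma_0 N_\Gamma(S_0)$, or the bookkeeping with inclusion-normalized cocycles and rigid maps used elsewhere in the paper --- rather than a bare induction on $|P/P_0|$.
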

\begin{proof}
This is Lemma~1.13 of \cite{Oliver2013}.
\end{proof}

\section{The norm argument and the odd case}\label{S:podd}

For a finite group $G$ with action on an abelian group $V$ (written
multiplicatively), and a subgroup $H$ of $G$, the \textit{norm map}
$\N_H^G\colon C_V(H) \to C_V(G)$ is defined by
\[
\N_H^G(v) = \prod_{g \in [G/H]} v^g
\]
for each $v \in C_V(H)$, where $v^g$ denotes the image of $v$ under $g$, and
where $[G/H]$ is a set of right coset representatives for $H$ in $G$. We say
that $\N_H^G = 1$ \textit{on $V$} if this map is constant, onto the
identity element of $V$. Since $\N_H^G = \N_K^G\N_H^K$ whenever $H \leq K
\leq G$, one sees that $\N_H^G = 1$ on $V$ whenever either of $\N_K^G$ or
$\N_H^K$ is $1$ on $V$. 

In this section, we give some sufficient conditions for determining that the
norm map $\N_H^G$ is constant for suitable $p$-local subgroups $H$ of $G$, and
then apply these results in Proposition~\ref{P:3.3odd} to obtain a proof of
Theorem~\ref{T:main} for odd primes. 

A subgroup $A$ of $G$ acts \emph{quadratically} on $V$ if $[V,A,A] = 1$ but
$[V,A] \neq 1$. In particular, when $V$ is elementary abelian, each element of
such a subgroup has quadratic or linear minimum polynomial in its action on
$V$.

\begin{lemma}\label{L:quadnorm}
Suppose that $A$ is a $p$-group acting on an elementary abelian $p$-group $V$. 
\begin{enumerate}
\item[(a)] if $p$ is odd and $A$ acts quadratically on $V$, then $\N_{A_0}^A =
1$ on $V$ for every proper subgroup $A_0$ of $A$.  
\item[(b)] if $p = 2$, then $\N_{A_0}^A = 1$ on $V$ for every subgroup $A_0$ of
$A$ satisfying one of the following conditions:
\begin{enumerate}
\item[(i)] $|A:A_0| \geq 2$ and $C_V(A_0) = C_V(A)$, or
\item[(ii)] $|A: A_0| \geq 4$ and $A$ acts quadratically on $V$.
\end{enumerate}
\end{enumerate}
\end{lemma}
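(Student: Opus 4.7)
The strategy is a direct computation that reduces everything to the familiar fact about the sum of all elements of an elementary abelian $p$-group. Writing $V$ multiplicatively, for any $v\in C_V(A_0)$ and any transversal $T$ of $A_0$ in $A$,
\[
\N_{A_0}^A(v) \;=\; \prod_{t\in T} v^t \;=\; v^{[A:A_0]}\,\prod_{t\in T}[v,t].
\]
In each of the three cases $[A:A_0]$ is a positive power of $p$, so $v^{[A:A_0]}=1$ because $V$ has exponent $p$. Thus it suffices to show $\prod_{t\in T}[v,t]=1$. Part (b)(i) is then immediate: the hypothesis $C_V(A_0)=C_V(A)$ forces $[v,t]=1$ for every $t\in A$.

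For parts (a) and (b)(ii), the quadratic hypothesis $[V,A,A]=1$ implies $[v,a]\in[V,A]\leq C_V(A)$, hence $[v,a]^b=[v,a]$ for all $a,b\in A$. Combined with the standard commutator identity $[v,ab]=[v,b][v,a]^b$, this shows that the map $\phi_v\colon A\to V$, $\phi_v(a):=[v,a]$, is a group homomorphism into the abelian group $V$. Since $V$ has exponent $p$, $\phi_v$ kills the Frattini subgroup $\Phi(A)=[A,A]A^p$; and since $v\in C_V(A_0)$, it kills $A_0$ as well. Therefore $\phi_v$ factors as $\bar\phi_v\circ\pi$, where $\pi\colon A\to W:=A/\Phi(A)A_0$ is the projection and $W$ is an elementary abelian $p$-group. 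Grouping the $A_0$-cosets inside each $\Phi(A)A_0$-coset (each of the latter contains exactly $[\Phi(A)A_0:A_0]$ of the former, and $\phi_v$ is constant on $\Phi(A)A_0$-cosets) gives
\[
\prod_{t\in T}[v,t] \;=\; \Bigl(\bar\phi_v\Bigl(\prod_{w\in W} w\Bigr)\Bigr)^{[\Phi(A)A_0:A_0]}.
\]

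A routine coordinate-wise calculation (or, for $p$ odd, the pairing $w\leftrightarrow w^{-1}$) shows that for an elementary abelian $p$-group $W$, $\prod_{w\in W}w=1$ unless $p=2$ and $|W|=2$, in which case the product is the unique nontrivial element. Both remaining cases now drop out. Either $\Phi(A)\not\leq A_0$, so the exponent $[\Phi(A)A_0:A_0]$ is divisible by $p$ and the whole expression is trivial in the exponent-$p$ group $V$; or $\Phi(A)\leq A_0$, so $W=A/A_0$, and $\prod_{w\in W}w=1$ because in part (a) the group $W$ has odd order, and in part (b)(ii) $|W|=|A:A_0|\geq 4>2$.

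The only substantive step is the identification of $\phi_v$ as a homomorphism under the quadratic hypothesis; everything else is elementary. No serious obstacle arises, but the sharpness of the bound $|A:A_0|\geq 4$ in (b)(ii) is dictated precisely by the one exceptional case $p=2$, $|W|=2$ of the sum-of-elements identity, which is in turn the group-theoretic shadow of the $S_3\curvearrowright C_2\times C_2$ obstruction highlighted in the introduction.
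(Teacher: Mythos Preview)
Your argument is correct but follows a genuinely different route from the paper's. The paper never computes the full norm $\N_{A_0}^A$ directly; instead it interposes an intermediate subgroup $A_1$ with $A_0\leq A_1\leq A$ of index $p$ (for (a) and (b)(i)) or index $4$ (for (b)(ii)), uses the factorisation $\N_{A_0}^A=\N_{A_1}^A\N_{A_0}^{A_1}$, and then kills $\N_{A_1}^A$ by explicit polynomial identities in $\End(V)$: namely $1+a+\cdots+a^{p-1}=(1-a)^{p-1}$ for (a), the squaring identity $vv^a=v^2=1$ for (b)(i), and $1+a+b+ab=(1-a)(1-b)$ for (b)(ii), each vanishing because $(1-a)^2=0$ under quadratic action. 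Your approach instead rewrites the entire norm as $v^{[A:A_0]}\prod_t[v,t]$, recognises that quadratic action makes $a\mapsto[v,a]$ a homomorphism into the exponent-$p$ group $V$, and reduces to the ``Wilson-type'' computation of $\prod_{w\in W}w$ in an elementary abelian quotient. Your method is more uniform---it treats (a) and (b)(ii) in one stroke and makes transparent why the bound $|A:A_0|\geq4$ is exactly the threshold---while the paper's is shorter and stays closer to the endomorphism-ring viewpoint that drives the rest of the norm arguments.
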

\begin{proof}
By abuse of notation, we use the same symbol to denote an element of $A$ and
the endomorphism it induces on $V$. Suppose first that $p$ is odd.  Let $A_0
\leq A_1 \leq A$ with $A_1$ of index $p$ in $A$, and $a \in A-A_1$.  Then
$(1-a)^2 = 0$ in $\End(V)$ by assumption and $pa = 0$ in $\End(V)$ since $V$ is
elementary abelian. Hence
\[
\N_{A_1}^A(v) = v^{1+a+\cdots+a^{p-1}} = v^{(1-a)^{p-1}} = 1 
\]
for all $v \in C_V(A_1)$, since $p-1 \geq 2$. Thus $\N_{A_0}^A(v) =
\N_{A_1}^{A}(\N_{A_0}^{A_1}(v)) = 1$ for all $v \in C_V(A_0)$ as desired.

Let $p=2$. Under assumption (i), choose coset representatives $\{1,a\}$ for a
maximal subgroup $A_1$ of $A$ containing $A_0$, and then $\N_{A_1}^A(v) = vv^a
= v^2 = 1$ for all $v \in C_V(A_1) = C_V(A)$, since $V$ is elementary abelian.
Under assumption (ii), fix a subgroup $A_1$ containing $A_0$ with index $4$ in
$A$ and choose coset representatives for $A_1$ in $A$ as $\{1,a,b,ab\}$ for
suitable $a, b \in A-A_1$. Then $1 + a + b + ab = (1-a)(1-b) = 0$ in $\End(V)$
since the action is quadratic, and so $\N_{A_1}^A(v) = v^{1+a+b+ab} = 1$ for
all $v \in C_V(A_1)$. As in the odd case, this completes the proof.
\end{proof}

\begin{theorem}\label{T:glawc}
Suppose $G$ is a finite group, $S \in \Syl_p(G)$, and $D$ is a $p$-group on
which $G$ acts.  Let $\A$ be a nonempty set of subgroups of $S$, and set $J =
\gen{\A}$.  Assume that $J$ is weakly closed in $S$ with respect to $G$ and
that
\begin{align}\label{E:normcond}
\parbox[t]{0.85\linewidth}{whenever $A \in \A$, $g \in G$, $A \nleq S^g$, and
$V$ is a composition factor of $D$ under $G$, then $\N_{A \cap S^g}^{A} = 1$ on
$V$.}
\end{align}
Then $C_D(N_G(J)) = C_D(G)$.
\end{theorem}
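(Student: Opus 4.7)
The plan is to reduce to the case where $D = V$ is an irreducible $\mathbb{F}_p[G]$-composition factor of $D$, and then to run a norm argument over the Sylow $p$-subgroups of $G$ that exploits weak closure of $J$. For the reduction I would use induction on $|D|$ via a $G$-chief series: for a minimal $G$-invariant subgroup $V \leq D$, the hypothesis restricts to both $V$ and $D/V$, and the inductive conclusion on $D/V$, combined with the conclusion on $V$ (the base case treated below), will force the conclusion on $D$ by an argument with the $1$-cocycle $g \mapsto d^g d^{-1} \in V$ associated to any $d \in C_D(N_G(J))$. Thus it suffices to treat $D = V$ irreducible, where $C_V(G) \in \{1,V\}$ by Schur; the case $C_V(G) = V$ is trivial, so I assume $C_V(G) = 1$ and, for contradiction, fix $d \in C_V(N_G(J))$ with $d \neq 1$.

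The central computation first notes that $N_G(S) \leq N_G(J)$: for $n \in N_G(S)$, $J^n \leq S^n = S$ forces $J^n = J$ by weak closure. Hence $d$ is $N_G(S)$-invariant, and the norm $\N_S^G(d) \in C_V(G) = 1$ decomposes cleanly once right $S$-cosets in $G$ are grouped by the Sylow subgroup $S^x$ that each determines (well-defined on $S\backslash G$ since $S^{sx}=S^x$ for $s \in S$). Each Sylow $T$ in the set $\Sigma$ of Sylow $p$-subgroups of $G$ corresponds to exactly $[N_G(S):S]$ right $S$-cosets, within which the value $d^x = d^{g_T}$ is constant by $N_G(S)$-invariance, where $g_T$ is any element with $S^{g_T}=T$. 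Since $[N_G(S):S]$ is coprime to $p$, this leaves
\[
\prod_{T \in \Sigma} d^{g_T} = 1 \quad \text{in}\ V.
\]
Then, for each $A \in \A$, I would decompose $\Sigma$ into $A$-orbits under conjugation; the stabilizer of $T$ in $A$ is $A\cap N_G(T)=A\cap T$, since any $p$-subgroup of $N_G(T)$ lies in its unique Sylow $T$. On a nontrivial $A$-orbit with base point $T$ (so $A \nleq T$), the contribution is $\N^A_{A\cap T}(d^{g_T})$, which vanishes on the composition factor $V$ by the norm hypothesis applied with $g = g_T$. The $A$-fixed Sylows are $\Sigma_A := \{T \in \Sigma : A \leq T\}$, so
\[
\prod_{T \in \Sigma_A} d^{g_T} = 1 \quad \text{for each}\ A \in \A.
\]

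To close, weak closure of $J$ together with $N_G(S)\leq N_G(J)$ identifies the Sylows of $G$ containing $J$ (denote this set $\Sigma_J$) with the Sylow subgroups of $N_G(J)$: any $T = S^g$ with $J \leq T$ satisfies $J^{g^{-1}} \leq S$, hence $g \in N_G(J)$ and $T \leq N_G(J)$; conversely each Sylow of $N_G(J)$ is a Sylow of $G$ containing $J$. Then $|\Sigma_J|\equiv 1\pmod p$ by Sylow's theorem, and each $T\in\Sigma_J$ admits $g_T \in N_G(J)$, giving $d^{g_T}=d$ and hence $\prod_{T\in\Sigma_J} d^{g_T}=d$ in the elementary abelian $V$. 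The hard part will be to combine the identities $\prod_{T\in\Sigma_A} d^{g_T}=1$ indexed by $A \in \A$ to conclude $\prod_{T\in\Sigma_J} d^{g_T}=1$, which will yield $d=1$ and the contradiction. Since $\Sigma_A$ properly contains $\Sigma_J = \bigcap_{A\in\A}\Sigma_A$ in general, this does not follow by any direct inclusion-exclusion. Following Glauberman, I would instead promote the norm hypothesis from the generators $A \in \A$ to the analogous condition $\N^J_{J \cap S^g} = 1$ on $V$ for $g \notin N_G(J)$; via the Mackey-style factorization $\N^J_{A\cap S^g} = \N^J_A \circ \N^A_{A\cap S^g}$, this reduces the theorem to the classical single-subgroup case, where the $J$-orbit decomposition of $\Sigma$ immediately gives $\prod_{T\in\Sigma_J} d^{g_T} = \prod_{T\in\Sigma} d^{g_T} = 1$, as required.
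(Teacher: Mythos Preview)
Your overall strategy is sound and close in spirit to Glauberman's original (which the paper merely cites for Theorem~\ref{T:glawc}; the analogous Theorem~\ref{T:glawc2} is proved in Appendix~\ref{A:norm2}). There is, however, a genuine gap in the final step. You correctly identify that one must promote the hypothesis from $\N^A_{A\cap S^g}=1$ to $\N^J_{J\cap S^g}=1$ on $V$ whenever $J\nleq S^g$, but the transitivity formula you invoke, $\N^J_{A\cap S^g}=\N^J_A\circ\N^A_{A\cap S^g}$, only yields $\N^J_{A\cap S^g}=1$. Since $A\cap S^g$ is in general a proper subgroup of $J\cap S^g$, this is the wrong map. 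What is actually needed is the product-set factorization of Lemma~\ref{L:normproddecomp}: with $P=J$, $Q=A$, $R=J\cap S^g$ (so $Q\cap R=A\cap S^g$ because $A\leq J$), one has
\[
\N^J_{J\cap S^g}=\N^J_{A\cdot(J\cap S^g);\,Y}\circ \N^A_{A\cap S^g}\big|_{C_V(J\cap S^g)}
\]
for a suitable transversal $Y$ to the set $A\cdot(J\cap S^g)$ in $J$. Choosing $A\in\A$ with $A\nleq S^g$ (possible since $J=\gen{\A}\nleq S^g$) now kills the right-hand factor by hypothesis and gives the desired vanishing. This is not a cosmetic fix: the norm relative to a product set, rather than a subgroup, is precisely the technical device that makes the promotion work.

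Once repaired, your Sylow-orbit argument goes through, though it takes a detour: the intermediate identities $\prod_{T\in\Sigma_A}d^{g_T}=1$ are never used, since after promoting to $J$ you pass directly to the $J$-orbit decomposition of $\Sigma$. The paper's appendix proof of Theorem~\ref{T:glawc2} avoids the Sylow bookkeeping altogether by applying $\N_H^G$ with $H=N_G(J)$ and the double-coset decomposition
\[
\N_H^G(z)=\prod_{g\in[H\backslash G/J]}\N^J_{J\cap H^g}(z^g),
\]
where the $g=1$ term is $z$ and all others vanish by the promoted hypothesis, so $z=\N_H^G(z)\in C_V(G)$ without ever assuming $C_V(G)=1$. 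Your cocycle reduction to an irreducible $V$ is valid (it rests on $[G:N_G(J)]$ being prime to $p$, hence restriction on $H^1$ is injective), but the paper's route combines the same promotion step with $\N_H^G$ and an $\Omega_1/\mho^1$ induction in a way that is shorter and requires no cohomological input.
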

\begin{proof}
This is Theorem~A1.4 of \cite{Glauberman1971}. 
\end{proof}

We refer to Theorem~\ref{T:glawc}, and also to Theorem~\ref{T:glawc2} below, as
the \emph{norm argument} for short. In the past, it has usually been applied
with $p$ odd and in the presence of quadratic elements in $G$ on $D$ (cf.
Lemma~\ref{L:quadnorm}(a)).  

\begin{definition}\label{D:gensetupnotation}
For a general setup $(\Gamma, S, Y)$, set $D = Z(Y)$ and $G =
\Gamma/C_\Gamma(D)$. Let $\A$ be a set of abelian subgroups of $G$ that is
invariant under $G$-conjugation.  For any subgroup $H$ of $G$, set $\A \cap H =
\{A \in \A \,|\, A \leq H\}$ and $J_\A(H) = \gen{\A \cap H}$.  For a subgroup
$H$ of $\Gamma$, we let $J_\A(H, D)$ be the preimage in $H$ of
$J_\A(HC_\Gamma(D)/C_\Gamma(D))$. Often $J_\A(H,D)$ will be abbreviated to
$J_\A(H)$ when $D$ is understood. 
\end{definition}

The collection $\A$ of Definition~\ref{D:gensetupnotation} will generally be
some subset of the collection of nontrivial best offenders in $G$ on $D$, as
defined below. 

\begin{definition}\label{D:offender}
Let $G$ be a finite group and let $D$ be an abelian $p$-group on which $G$ acts
faithfully. An abelian $p$-subgroup $A \leq G$ is an \emph{offender} on $D$ if 
\[
|A||C_D(A)| \geq |D|
\]
and a \emph{best offender} if
\[
|A||C_D(A)| \geq |B||C_D(B)|
\]
for every subgroup $B \leq A$. An offender $A$ is \emph{nontrivial} if $A \neq
1$.  We write $\A_D(G)$ for the collection of nontrivial best offenders in $G$
on $D$. 
\end{definition}

A best offender is, in particular, an offender, as can be seen by taking $B=1$
in the above definition. Conversely, each nontrivial offender $A$ contains a
nontrivial best offender, which can be obtained as a subgroup $B$ such that the
quantity $|B||C_D(B)|$ is maximal among all nontrivial subgroups of $A$.  In
turn, by the Timmesfeld Replacement Theorem \cite{Timmesfeld1982}, each
nontrivial best offender contains a nontrivial \emph{quadratic best offender},
namely, a best offender that acts quadratically on $D$. We include a short proof
of this, using the Thompson Replacement Theorem, in the form which is needed
here.

\begin{lemma}\label{L:replacement}
Suppose $A$ is a nontrivial offender on $D$.  Let $B$ be a nontrivial subgroup
of $A$ that is minimal under inclusion subject to $|B||C_D(B)| \geq
|A||C_D(A)|$.  Then $B$ is a quadratic best offender on $D$. 
\end{lemma}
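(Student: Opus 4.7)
The plan is to verify separately that $B$ is a best offender and that $B$ acts quadratically on $D$.

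\emph{Best offender.} This follows immediately from minimality. For any proper nontrivial $C \leq B$, the inequality $|C||C_D(C)| \geq |B||C_D(B)|$ would give $|C||C_D(C)| \geq |A||C_D(A)|$, contradicting the minimality of $B$; hence $|C||C_D(C)| < |B||C_D(B)|$. For $C = 1$, $|C||C_D(C)| = |D| \leq |A||C_D(A)| \leq |B||C_D(B)|$ since $A$ is an offender, and for $C = B$ the inequality is trivial.

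\emph{Quadratic.} Suppose for contradiction that $[D, B, B] \neq 1$, and set $B^* := C_B([D, B])$. By assumption $B^* \lneq B$. Since $A$ is abelian, $[B, B^*] = 1$, and $[[D, B], B^*] = 1$ by definition of $B^*$; the Three Subgroups Lemma (applied to $D$, $B$, $B^*$ in the semidirect product $A \ltimes D$) then yields $[[D, B^*], B] = 1$, i.e., $[D, B^*] \leq C_D(B)$. Since $B^* \leq B$ gives $C_D(B) \leq C_D(B^*)$, we have $[D, B^*] \leq C_D(B^*)$, so $B^*$ acts quadratically on $D$.

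The heart of the argument is now the Thompson Replacement inequality
\[
|B^*| \cdot |C_D(B^*)| \;\geq\; |B| \cdot |C_D(B)|,
\]
which, combined with the defining inequality for $B$, yields $|B^*||C_D(B^*)| \geq |A||C_D(A)|$. When $B^* \neq 1$ this contradicts the minimality of $B$, completing the proof. The degenerate case $B^* = 1$ (in which $B$ would act faithfully on $[D, B]$) must be absorbed into the same inequality by a short direct check: in that case the offender conditions force $|B||C_D(B)| = |D|$, and the best-offender conclusion together with $[D, B, B] \neq 1$ should be driven to a contradiction by itself. The principal technical obstacle is establishing the displayed inequality, equivalently $|C_D(B^*) : C_D(B)| \geq |B : B^*|$. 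This is precisely the Thompson Replacement computation: one uses the quadratic action of $B^*$ on $D$ together with $D$ abelian to make the commutator pairing $D \times B^* \to [D, B^*]$ biadditive, and then extracts the required index bound from the fact that $B/B^*$ acts faithfully on $[D, B]$.
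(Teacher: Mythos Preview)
Your best-offender paragraph is fine and matches the paper's observation. The problem is the quadratic part: the entire weight of the argument rests on the displayed inequality
\[
|B^*|\,|C_D(B^*)| \;\geq\; |B|\,|C_D(B)|,
\]
and you do not prove it. What you call ``the Thompson Replacement computation'' is in fact the content of Timmesfeld's Replacement Theorem, which is a genuinely nontrivial statement; Thompson's theorem is about abelian subgroups of maximal order in a $p$-group and says something different. Your closing sketch---``make the commutator pairing $D \times B^* \to [D,B^*]$ biadditive, and then extract the required index bound from the fact that $B/B^*$ acts faithfully on $[D,B]$''---does not yield $|C_D(B^*):C_D(B)| \geq |B:B^*|$ in any direct way. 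Biadditivity of the $B^*$-pairing tells you about $D/C_D(B^*)$ versus $B^*$, not about $C_D(B^*)/C_D(B)$ versus $B/B^*$, and faithfulness of $B/B^*$ on $[D,B]$ gives only a crude bound on $|B/B^*|$ in terms of $|\Aut([D,B])|$. The case $B^* = 1$ is also not handled: you assert that ``the best-offender conclusion together with $[D,B,B]\neq 1$ should be driven to a contradiction by itself,'' but give no argument.

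The paper's proof avoids this gap by a different mechanism. It passes to the semidirect product $DB$, sets $C = B\,C_D(B)$, and shows (using that $B$ is a best offender) that $C$ is an abelian subgroup of maximal order in $DB$. Quadratic action of $B$ is then equivalent to $D$ normalizing $C$. If $D$ does not normalize $C$, the genuine Thompson Replacement Theorem produces an abelian $C_1 \leq DB$ with $|C_1| = |C|$ and $C_1 \cap D > C \cap D$; projecting $C_1$ to $B$ gives a proper subgroup $B_1 < B$ with $|B_1||C_D(B_1)| = |B||C_D(B)|$, and minimality forces $B_1 = 1$, hence $C_1 = D$, contradicting the assumption that $D$ does not normalize $C$. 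If you want to salvage your approach, you must either supply a full proof of the Timmesfeld inequality (which is comparable in difficulty to the lemma itself) or cite it explicitly as \cite{Timmesfeld1982} rather than as a computation.
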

\begin{proof}
It follows from the choice of $B$ that $B$ is a best offender, so we need only
show that it acts quadratically on $D$. We work in the semidirect product $DB$,
where we set $C = BC_D(B)$. 

We first show that $C$ is an abelian subgroup of $DB$ of maximum possible
order. Suppose that $C_1$ is an abelian subgroup of $DB$ such that $|C_1| \geq
|C|$, and let $B_1$ be the image of $C_1$ under the projection of $DB$ onto
$B$. Then $C_1 \cap D \leq C_D(C_1) = C_D(B_1)$, and so
\[
|C| \leq |C_1| = |C_1/(C_1 \cap D)||C_1 \cap D| = |B_1||C_1 \cap D| \leq
|B_1||C_D(B_1)| \leq |B||C_D(B)| = |C|
\]
with the last inequality since $B$ is a best offender on $D$. Therefore
equality holds everywhere, which yields
\begin{eqnarray}\label{E:bestoff2}
C_1 \cap D = C_D(B_1) \quad \text{ and } \quad |C_1| = |C| = |B_1C_D(B_1)| = |B||C_D(B)|. 
\end{eqnarray}
This shows that $C$ is an abelian subgroup of maximal order in $DB$. 

Note that if $D$ normalizes $C$, then $[D,C] \leq C$ and so $[D,B,B] =
[D,C,C] \leq [C,C] = 1$ since $C$ is abelian. Hence $B$ acts quadratically on
$D$ in this case.

Suppose that $D$ does not normalize $C$. Then by Thompson's Replacement Theorem
\cite[Theorem~8.2.5]{Gorenstein1980}, there exists an abelian subgroup $C_1$ of
$DB$ such that $|C_1| = |C|$, $C_1 \cap D > C \cap D$, and $C_1$ normalizes
$C$. Take $B_1$ as above. Then by \eqref{E:bestoff2}, 
\[
|B_1||C_D(B_1)| = |B||C_D(B)|
\]
and 
\[
|B_1| = |B||C_D(B)|/|C_D(B_1)| = |B||C \cap D|/|C_1 \cap D| < |B|.
\]
By the minimal choice of $B$, we have that $B_1 = 1$ and therefore that $C_1 =
C_D(B_1) = D$. This shows that $D$ normalizes $C$, a contradiction which
completes the proof of the lemma.
\end{proof}

\begin{lemma}\label{L:normquot}
Let $(\Gamma, S, Y)$ be a general setup for the prime $p$, set $D = Z(Y)$, and
use bar notation for images modulo $C_\Gamma(D)$.  If $Q$ is a subgroup of $S$
containing $C_S(D)$, then $\ol{N_\Gamma(Q)} = N_{\bar{\Gamma}}(\bar{Q})$. 
\end{lemma}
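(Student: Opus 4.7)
My plan is to prove the containment $\ol{N_\Gamma(Q)} \supseteq N_{\bar\Gamma}(\bar Q)$ (the reverse containment is trivial, since $Q^g = Q$ forces $\bar Q^{\bar g} = \bar Q$) by a Sylow + Frattini argument inside the subgroup $QK$, where $K := C_\Gamma(D)$. The point is that the hypothesis $C_S(D) \leq Q$ is exactly what makes $Q$ a Sylow $p$-subgroup of $QK$, and once this is known, Sylow conjugacy combined with the Frattini argument supplies an element $k \in K$ with $(Q^g)^k = Q$, whose image modulo $K$ is trivial.

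To set up, I would first note that $D = Z(Y)$ is characteristic in $Y$ and hence normal in $\Gamma$, so $K = C_\Gamma(D)$ is normal in $\Gamma$. Since $S \in \Syl_p(\Gamma)$, this gives $S \cap K = C_S(D) \in \Syl_p(K)$. Given $\bar g \in N_{\bar\Gamma}(\bar Q)$ we have $Q^g K = Q K$, so $Q^g \leq QK$.

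The key step, and the one that uses the hypothesis, is the claim that $Q \in \Syl_p(QK)$. Using $C_S(D) \leq Q$, I compute
\[
Q \cap K \;=\; Q \cap C_\Gamma(D) \;=\; C_Q(D) \;=\; Q \cap C_S(D) \;=\; C_S(D),
\]
so $Q \cap K$ is itself a Sylow $p$-subgroup of $K$; then $|QK|_p = |Q|\cdot |K|_p/|Q\cap K| = |Q|$, giving the claim. Consequently $Q^g$ is another Sylow $p$-subgroup of $QK$, and by Sylow's theorem there is $k \in QK$ with $(Q^g)^k = Q$.

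Finally I would apply the Frattini argument to the normal subgroup $K \norm QK$ with Sylow complement $Q$, obtaining $QK = K\, N_{QK}(Q)$, and thus writing $k = k_0 n$ with $k_0 \in K$ and $n \in N_{QK}(Q)$. Then $(Q^g)^{k_0} = Q^{n^{-1}} = Q$, so $gk_0 \in N_\Gamma(Q)$, while $\overline{gk_0} = \bar g$ because $k_0 \in K = \ker(\Gamma \to \bar\Gamma)$. This proves $\bar g \in \ol{N_\Gamma(Q)}$. The only nontrivial step is the Sylow claim, which is really the whole content of the hypothesis $C_S(D) \leq Q$; the rest is a routine Sylow/Frattini manipulation.
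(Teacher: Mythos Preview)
Your proof is correct and takes essentially the same approach as the paper: both hinge on showing $Q \in \Syl_p\big(QC_\Gamma(D)\big)$ from the hypothesis $C_S(D) \leq Q$ and then finishing with a Sylow/Frattini argument. The paper applies Frattini directly to the normal subgroup $QC_\Gamma(D)$ of the full preimage $N$ of $N_{\bar\Gamma}(\bar Q)$ to obtain $N = QC_\Gamma(D)\,N_N(Q) \leq C_\Gamma(D)\,N_\Gamma(Q)$, whereas you work element-wise via Sylow conjugacy---note that your final decomposition $QK = K\,N_{QK}(Q)$ is simply the trivial identity $QK = KQ$ (since $Q \leq N_{QK}(Q)$) and does not actually require the Frattini argument.
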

\begin{proof}
Let $N$ be the preimage of $N_{\bar{\Gamma}}(\bar{Q})$. Then
$C_\Gamma(D)N_\Gamma(Q) \leq N$ and we must show that $N \leq
C_\Gamma(D)N_\Gamma(Q)$. Now $QC_\Gamma(D)$ is normal in $N$, and $QC_\Gamma(D)
\cap S = QC_S(D) = Q$ is a Sylow $p$-subgroup of $QC_\Gamma(D)$. By the
Frattini argument,
\[
N = (QC_\Gamma(D))N_N(Q) \leq QC_\Gamma(D)N_\Gamma(Q) = C_\Gamma(D)N_\Gamma(Q)
\]
as desired. 
\end{proof}

The following proposition is a generalization of
\cite[Proposition~3.2]{Oliver2013} for odd primes.

\begin{proposition}\label{P:3.2}
Let $(\Gamma, S, Y)$ be a general setup for the prime $p$. Set $D = Z(Y)$, $G =
\Gamma/C_\Gamma(D)$, and $\F = \F_S(\Gamma)$. Let $\A$ be a $G$-invariant
collection of $p$-subgroups of $G$, each of which acts nontrivially and
quadratically on $D$. Let $\R \subseteq \sS(S)_{\geq Y}$ be an $\F$-invariant
interval such that $Y \in \R$ and $J_\A(S) \notin \R$.  If $p$ is odd, then
$L^1(\F;\R) = 0$.
\end{proposition}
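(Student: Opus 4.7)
The plan is to apply Lemma~\ref{L:oliexact}(b) with $\Q := \sS(S)_{\geq Y} \setminus \R$. First I would verify that $\Q$ is an $\F$-invariant interval (using $Y \in \R$ and the interval property of $\R$: if $P \in \R$ and $Y \leq Q_1 \leq P$, then $Q_1 \in \R$), that $\Q \cap \R = \varnothing$ and $\Q \cup \R = \sS(S)_{\geq Y}$ is an interval, and that condition (iii) of Lemma~\ref{L:oliexact} holds: if $Q \in \Q$ and $R \in \R$ with $Q \leq R$, then $Y \leq Q \leq R$ with $Y, R \in \R$ forces $Q \in \R$, a contradiction. The resulting short exact sequence $1 \to C_D(\Gamma) \to C_D(\Gamma^*) \to L^1(\F;\R) \to 1$ reduces the task to showing $C_D(\Gamma^*) = C_D(\Gamma)$, where $\Gamma^*$ is the set of $g \in \Gamma$ conjugating some member of $\Q$ into $\Q$. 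Since $Y \leq J_\A(S,D)$ and $J_\A(S) \notin \R$ by hypothesis, one has $J_\A(S) \in \Q$, and hence $N_\Gamma(J_\A(S)) \subseteq \Gamma^*$. It therefore suffices to prove $C_D(N_\Gamma(J_\A(S))) = C_D(\Gamma)$.

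For this I would apply Theorem~\ref{T:glawc} (the norm argument) with the ambient group $\Gamma$, Sylow $p$-subgroup $S$, module $D$, and the collection $\wt\A_S := \{\wt A : A \in \A,\ A \leq \ol S\}$ of preimages in $S$ of the members of $\A$ lying in $\ol S := SC_\Gamma(D)/C_\Gamma(D)$. By construction the subgroup generated by $\wt\A_S$ is precisely $J := J_\A(S,D)$. Two conditions must be checked. The first is weak closure of $J$ in $S$ with respect to $\Gamma$: using that $\A$ is $G$-invariant, if $J_\A(\ol S)^{\ol g} \leq \ol S$ then each generator $A^{\ol g}$ of the left side still belongs to $\A \cap \ol S$, so $J_\A(\ol S)^{\ol g} = J_\A(\ol S)$ by order comparison; Lemma~\ref{L:normquot} together with the normality of $C_\Gamma(D)$ in $\Gamma$ and the containment $C_S(D) \leq J$ then lifts weak closure from $\ol \Gamma$ to $\Gamma$.

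The second condition is the norm condition \eqref{E:normcond}, and it is here that the hypotheses on $p$ and on quadratic action are essential. For $\wt A \in \wt\A_S$ the action on $D$ satisfies $[D, \wt A, \wt A] = 1$, which passes to any composition factor $V$ of $D$ under $\Gamma$ (an elementary abelian $\mathbb{F}_p[\Gamma]$-module). For $g \in \Gamma$ with $\wt A \nleq S^g$, the intersection $\wt A \cap S^g$ is a proper subgroup of $\wt A$, and Lemma~\ref{L:quadnorm}(a), whose proof uses only the identity $(1-a)^2 = 0$ on $V$ for $a \in \wt A$ and thus applies equally when $\wt A$ happens to act trivially on $V$ (in which case $\N_{A_0}^{\wt A}(v) = v^{[\wt A : A_0]} = 1$ since $V$ has exponent $p$ and $[\wt A : A_0]$ is divisible by $p$), yields $\N_{\wt A \cap S^g}^{\wt A} = 1$ on $V$. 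Theorem~\ref{T:glawc} then delivers $C_D(N_\Gamma(J)) = C_D(\Gamma)$, which completes the proof. The main obstacle is largely bookkeeping: one must carefully shuttle between $\Gamma$ and $\ol\Gamma = G$ in order to translate the hypotheses on $\A$ into hypotheses on $\wt\A_S$, but each individual verification is short once the setup is in place.
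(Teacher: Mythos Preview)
Your proposal is correct and follows essentially the same approach as the paper: reduce via Lemma~\ref{L:oliexact}(b) to $C_D(\Gamma^*) = C_D(\Gamma)$, observe $N_\Gamma(J_\A(S)) \subseteq \Gamma^*$ since $J_\A(S) \in \Q$, and conclude with the norm argument (Theorem~\ref{T:glawc}) via Lemma~\ref{L:quadnorm}(a). The only difference is cosmetic---the paper applies Theorem~\ref{T:glawc} to $G$ and then invokes Lemma~\ref{L:normquot} to pass back to $\Gamma$, whereas you work directly in $\Gamma$ with the preimages $\wt A$; your explicit handling of the case where $\wt A$ acts trivially on a composition factor is a detail the paper leaves implicit.
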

\begin{proof}
Set $\Q = \sS(S)_{\geq Y} - \R$. Let $\Gamma^*$ be the subset of $\Gamma$
consisting of those $g \in \Gamma$ for which there is $Q \in \Q$ with $Q^g \in
\Q$. Then $\Q$ and $\R$ are $\F$-invariant intervals that satisfy the
hypotheses of Lemma~\ref{L:oliexact}, so
\begin{eqnarray}\label{E:incliso}
\text{it suffices to show that $C_{D}(\Gamma) = C_{D}(\Gamma^*)$}
\end{eqnarray}
by part (b) of that lemma. 

As each element of $\A$ acts quadratically on $D$, Lemma~\ref{L:quadnorm}(a)
shows that \eqref{E:normcond} is satisfied.  Hence, $C_D(\Gamma) =
C_D(N_\Gamma(J_\A(S)))$ by Theorem~\ref{T:glawc} and Lemma~\ref{L:normquot}
(where the latter applies by Definition~\ref{D:gensetupnotation}).  Since
$J_\A(S) \in \Q$ by assumption, we have $N_\Gamma(J_\A(S)) \leq \Gamma^*$.
Hence
\[
C_D(\Gamma) = C_D(N_\Gamma(J_\A(S))) \geq C_D(\Gamma^*) \geq C_D(\Gamma)
\]
and \eqref{E:incliso} complete the proof.
\end{proof}

At odd primes, Theorem~\ref{T:main} now follows from
Proposition~\ref{P:3.2}, \cite[Proposition~3.3]{Oliver2013}, and the proof of
\cite[Theorem~3.4]{Oliver2013}.  However, when compared with
\cite[Proposition~3.2]{Oliver2013}, the increased generality of
Proposition~\ref{P:3.2} renders unnecessary some steps of the proof of
Proposition~3.3 of \cite{Oliver2013}. The following is Oliver's Proposition~3.3
with a simplified proof.

\begin{proposition}\label{P:3.3odd}
Let $(\Gamma, S, Y)$ be a general setup for the prime $p$. Set $\F =
\F_S(\Gamma)$, $D = Z(Y)$, and $G = \Gamma/C_\Gamma(D)$.  Let $\R \subseteq
\sS(S)_{\geq Y}$ be an $\F$-invariant interval such that for all $Q \in
\sS(S)_{\geq Y}$, $Q \in \R$ if and only if $J_{\A_D(G)}(Q) \in \R$. If $p$ is
odd, then $L^k(\F;\R) = 0$ for all $k \geq 1$. 
\end{proposition}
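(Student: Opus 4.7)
The plan is to reduce the statement to the vanishing of $L^1$ via Lemma~\ref{L:oliexact} and then apply Proposition~\ref{P:3.2}. Set $\Q = \sS(S)_{\geq Y} \setminus \R$. Since the biconditional ``$Q \in \R$ iff $J_{\A_D(G)}(Q) \in \R$'' is symmetric under complementation, $\Q$ also satisfies the hypothesis. The trivial cases $\R = \emptyset$ and $\R = \sS(S)_{\geq Y}$ are handled directly or via Lemma~\ref{L:olijm}(b). Otherwise, the interval structure of $\R$ combined with nontriviality forces exactly one of $Y, S$ into $\R$, and after possibly interchanging $\R$ and $\Q$ I may assume $Y \in \R$, so that $\R$ is downward-closed in $\sS(S)_{\geq Y}$. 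Lemma~\ref{L:oliexact}(a) then yields $L^{k-1}(\F;\Q) \cong L^k(\F;\R)$ for $k \geq 2$; by an induction on $|\Q|$ (using the hypothesis symmetry to reapply the same argument to $\Q$), the problem reduces to showing $L^1(\F;\R) = 0$.

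For this reduced statement, I apply Proposition~\ref{P:3.2} with $\A$ the $G$-invariant collection of nontrivial quadratic best offenders of $G$ on $D$; this consists of nontrivial quadratic $p$-subgroups of $G$ by construction. With $Y \in \R$ already assumed, it remains only to verify $J_\A(S,D) \notin \R$. The assumption $S \notin \R$ combined with the hypothesis on $\R$ yields $J_{\A_D(G)}(S,D) \notin \R$, so it suffices to show $J_\A(S,D) = J_{\A_D(G)}(S,D)$. The inclusion $\leq$ is immediate; the reverse should follow from Lemma~\ref{L:replacement} (Timmesfeld's Replacement Theorem) applied to each best offender $A$ in $\bar S$, by arguing that the resulting quadratic best sub-offenders of $A$ together generate $A$ using the abelian structure of best offenders.

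The main obstacle is precisely this identification $J_\A(S,D) = J_{\A_D(G)}(S,D)$, which goes beyond the immediate conclusion of Lemma~\ref{L:replacement} that each best offender merely contains a quadratic best sub-offender. A cleaner alternative that bypasses this equality is to take $\A$ to be the $G$-invariant collection of nontrivial quadratic $p$-subgroups of $G$ whose preimages in $S$ do not lie in $\R$: $G$-invariance follows from $\F$-invariance of $\R$, and $J_\A(S,D) \notin \R$ then follows formally from downward-closedness of $\R$ and the interval property, since any preimage $\widetilde{B} \leq J_\A(S,D)$ of a generator $B \in \A$ satisfies $\widetilde{B} \notin \R$, forcing $J_\A(S,D) \notin \R$. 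Nonemptiness of this $\A$ reduces, via Lemma~\ref{L:replacement} applied to a best offender whose preimage lies outside $\R$, to the hypothesis $J_{\A_D(G)}(S,D) \notin \R$.
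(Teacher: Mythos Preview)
Your reduction to $k=1$ does not go through. The ``induction on $|\Q|$'' is not well-founded: after using Lemma~\ref{L:oliexact}(a) to identify $L^k(\F;\R) \cong L^{k-1}(\F;\Q)$ (which requires $Y \in \R$, so that $\R$ is the downward-closed piece), you would need the proposition for the upward-closed interval $\Q$; but passing from $\Q$ back to its complement $\R$ returns you to the original problem with no quantity having decreased. An induction on $k$ alone also fails in the other orientation, since for an upward-closed $\R$ the long exact sequence gives $L^k(\F;\R) \hookrightarrow L^{k+1}(\F;\Q)$, shifting degree the wrong way. (Your claim that exactly one of $Y,S$ lies in $\R$ is also not justified for an arbitrary $\F$-invariant interval.) The paper avoids all of this by taking a minimal counterexample with respect to $(k,|\Gamma|,|\Gamma/Y|,|\R|)$ and invoking Steps~1 and~2 of the proof of \cite[Proposition~3.3]{Oliver2013}, which reduce simultaneously to $k=1$ and to the \emph{specific} interval $\R = \{P \in \sS(S)_{\geq Y} \mid J_{\A_D(G)}(P) = Y\}$.

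Your alternative (b) for the $L^1$ step also breaks. Given a best offender $A \leq \bar S$ with preimage $\tilde A \notin \R$, Lemma~\ref{L:replacement} produces a quadratic best offender $B \leq A$, but its preimage $\tilde B$ satisfies $Y \leq \tilde B \leq \tilde A$ with $\R$ downward-closed, so nothing prevents $\tilde B \in \R$; the hypothesis on $\R$ is vacuous here since $J_{\A_D(G)}(\tilde B) = \tilde B$. Hence you cannot conclude your $\A$ is nonempty. The reduction to the specific $\R$ above is precisely what makes this step work in the paper: for that $\R$ one takes $\A$ to be the \emph{minimal} nontrivial best offenders (quadratic by Lemma~\ref{L:replacement}), and then $P \in \R$ iff $\bar P$ contains no best offender iff $\bar P$ contains no minimal best offender iff $J_\A(P) \in \R$. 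Thus $S \notin \R$ immediately gives $J_\A(S) \notin \R$, and Proposition~\ref{P:3.2} applies directly. No equality $J_\A(S) = J_{\A_D(G)}(S)$ is ever needed.
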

\begin{proof}
Let $(\Gamma,S,Y,\R,k)$ be a counterexample for which the four-tuple
$(k,|\Gamma|,|\Gamma/Y|, |\R|)$ is minimal in the lexicographic ordering.
Steps 1 and 2 in the proof of \cite[Proposition~3.3]{Oliver2013} show that $\R
= \{P \leq S \mid J_{\A_D(G)}(P) = Y\}$ and $k = 1$ (since $p$ is odd). 

Let $\A$ be the set of nontrivial best offenders in $G$ on $D$ that are minimal
under inclusion. Each best offender contains a member of $\A$ as a subgroup,
and so $J_{\A_D(G)}(P) = Y$ if and only if $J_\A(P) = Y$.  By
Lemma~\ref{L:replacement}, each member of $\A$ acts quadratically on $D$. 

Clearly, $Y \in \R$. If $S \in \R$, then $\R = \sS(S)_{\geq Y}$ since $\R$ is
an interval, and $L^k(\F;\R) = 0$ for all $k \geq 1$ by Lemma~\ref{L:olijm}(b).
Hence $S \notin \R$ and so $J_\A(S) \notin \R$. Now Proposition~\ref{P:3.2}
shows that $(\Gamma, S, Y, \R, 1)$ is not a counterexample.  
\end{proof}

\section{Norm arguments for $p=2$}\label{S:norm2}

In this section, we define the notion of a solitary offender and prove the
lemmas necessary to obtain Theorem~\ref{T:solobs}. These results are used in
\S\ref{S:reduction2} to give a proof of \cite[Proposition~3.2]{Oliver2013}
except in the case where every minimal offender under inclusion is solitary.

The following analogue of Theorem~\ref{T:glawc} is easier to apply in
applications for $p=2$. The proof is very similar to that of
Theorem~\ref{T:glawc} and is given in Appendix~\ref{A:norm2} as
Theorem~\ref{T:norm2-appendix}.

\begin{theorem}\label{T:glawc2}
Suppose $G$ is a finite group, $S$ is a Sylow $2$-subgroup of $G$, and $D$ is
an abelian $2$-group on which $G$ acts. Let $\A$ be a nonempty set of subgroups
of $S$, and set $J = \gen{\A}$. Let $H$ be a subgroup of $G$ containing
$N_G(J)$. Set $V = \Omega_1(D)$. Assume that $J$ is weakly closed in $S$ with
respect to $G$, and
\begin{align}\label{E:normcond2}
\parbox[t]{0.85\linewidth}{whenever $A \in \A$, $g \in G$, and $A \nleq H^g$,
then $\N_{A \cap H^g}^{A} = 1$ on $V$,}
\end{align}
or more generally,
\begin{align}\label{E:gennormcond2}
\parbox[t]{0.85\linewidth}{whenever $g \in G$ and $J \nleq H^g$,
then $\N_{J \cap H^g}^{J} = 1$ on $V$.}
\end{align}
Then $C_D(H) = C_D(G)$. 
\end{theorem}

Throughout the remainder of this section, we fix a finite group $G$, a Sylow
$2$-subgroup $S$ of $G$, an abelian $2$-group $D$ on which $G$ acts faithfully,
and we set $V = \Omega_1(D)$. 

\begin{definition}
Set
\begin{align*}
\hat{\A}_D(G) = \{A \in \A_D(G) \mid |A||C_D(A)| > |D|\},
\end{align*}
the members of which are sometimes called \emph{over-offenders}.

Denote by $\A_D(G)^\circ$ the set of those members of $\A_D(G)$ that are
minimal under inclusion, and denote by $\hat{\A}_D(G)^\circ$ those members of
$\hat{\A}_D(G)$ that are minimal under inclusion.

For a positive integer $k$, write
\begin{align*}
\A_D(G)_2 &= \{A \in \A_D(G) \mid |A| = 2\}; \text{ and} \\
\A_D(G)^\circ_{\geq 4} &= \{A \in \A_D(G)^\circ \mid |A| \geq 4\}.
\end{align*}
\end{definition}

It may help to reiterate that a member of $\hat{\A}_D(G)^\circ$, while minimal
under inclusion in the collection $\hat{\A}_D(G)$, may not be minimal under
inclusion in $\A_D(G)$. By Lemma~\ref{L:replacement}, each member of
$\A_D(G)^\circ$ acts quadratically on $D$.

\begin{remark}\label{R:Ahat4}
If $A \in \A_D(G)_2$, then $|D/C_D(A)| = 2$.  In particular, every member of
$\hat{\A}_D(G)$ is of order at least $4$.
\end{remark}

\begin{lemma}\label{L:controlbig}
Let $\A = \A_D(G)^\circ_{\geq 4} \cup \hat{\A}_D(G)^\circ$. Assume that $\A$ is
not empty and $H$ is a subgroup of $G$ containing $N_G(J_\A(S))$. Then $\A \cap
S$ satisfies \eqref{E:normcond2}.  
\end{lemma}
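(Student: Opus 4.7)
The plan is to verify the norm condition for each $A \in \A \cap S$ and each $g \in G$ with $A \nleq H^g$, setting $A_0 := A \cap H^g$, by applying Lemma~\ref{L:quadnorm}(b). Two preliminary observations should do most of the work: first, that $A$ acts quadratically on $D$ (and hence $[V,A,A]=1$ on $V = \Omega_1(D)$), and second, that $|A| \geq 4$. For the quadratic action, I would apply Lemma~\ref{L:replacement} with $B = A$: the minimality of $A$ in $\A_D(G)^\circ$ or $\hat{\A}_D(G)^\circ$, combined with the best-offender inequality $|B||C_D(B)| \leq |A||C_D(A)|$ available for every $B \leq A$, precludes any proper nontrivial $B < A$ with $|B||C_D(B)| \geq |A||C_D(A)|$ -- because equality would make $B$ itself a nontrivial best offender, and moreover an over-offender in the $\hat{\A}$ case, contradicting the minimality of $A$ in its collection. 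The second observation is immediate if $A \in \A_D(G)^\circ_{\geq 4}$, and is exactly Remark~\ref{R:Ahat4} if $A \in \hat{\A}_D(G)^\circ$.

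With these in place, I would split on the index $|A:A_0|$. The easy case is $|A:A_0| \geq 4$: Lemma~\ref{L:quadnorm}(b)(ii) applies directly to yield $\N_{A_0}^A = 1$ on $V$.

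The case $|A:A_0| = 2$ is the main obstacle. Here $A_0 \neq 1$ because $|A| \geq 4$, and the goal is to verify the hypothesis $C_V(A_0) = C_V(A)$ of Lemma~\ref{L:quadnorm}(b)(i). For this I would re-run the minimality argument on the nontrivial subgroup $A_0 < A$ itself to obtain the strict inequality $|A_0||C_D(A_0)| < |A||C_D(A)|$: equality would promote $A_0$ to a nontrivial best offender (over-offender in the $\hat{\A}$ case), placing $A_0$ in $\A_D(G)$ or $\hat{\A}_D(G)$ below $A$ and contradicting minimality. Combining this strict inequality with $|A:A_0| = 2$ forces the index $|C_D(A_0) : C_D(A)|$ to be strictly less than $2$, hence $C_D(A_0) = C_D(A)$; intersecting with $V$ gives $C_V(A_0) = C_V(A)$ as required, and Lemma~\ref{L:quadnorm}(b)(i) finishes the proof.
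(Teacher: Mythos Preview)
Your argument is correct, but it takes a somewhat different route from the paper's. The paper never invokes quadraticity of $A$ (so Lemma~\ref{L:replacement} is not used here), and it does not split on the index $|A:A\cap H^g|$. Instead, the paper fixes \emph{any} index-$2$ subgroup $A_0$ of $A$ containing $A\cap H^g$, assumes for contradiction that $\N_{A\cap H^g}^A \neq 1$ on $V$, and factors the norm through $A_0$ to get $\N_{A_0}^A \neq 1$. The contrapositive of Lemma~\ref{L:quadnorm}(b)(i) alone then gives $C_V(A_0) > C_V(A)$, whence $|C_D(A_0)| \geq 2|C_D(A)|$ and so $|A_0||C_D(A_0)| \geq |A||C_D(A)|$; the same minimality contradiction you found finishes the proof. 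Thus the paper handles all indices uniformly via a single index-$2$ step, while you separate the case $|A:A_0|\geq 4$ and dispatch it with quadraticity via Lemma~\ref{L:quadnorm}(b)(ii). Your approach is slightly longer but has the virtue of making the quadratic action of $A$ explicit and arguing directly rather than by contradiction; the paper's approach is shorter and avoids the appeal to Lemma~\ref{L:replacement} altogether.
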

\begin{proof}
Fix $A \in \A \cap S$ and $g \in G$ with $A \nleq H^g$, and let $A_0$ be a
subgroup of $A$ of index $2$ that contains $A \cap H^g$. Suppose $\N_{A \cap
H^g}^A$ is not $1$ on $V$. Since $\N_{A \cap H^g}^A = \N_{A_0}^A\N_{A \cap
H^g}^{A_0}$, we have $C_V(A) < C_V(A_0)$ by Lemma~\ref{L:quadnorm}. It follows
that
\begin{eqnarray}\label{E:ineq}
|A_0||C_D(A_0)| \geq \frac{1}{2}|A|\cdot 2|C_D(A)| = |A||C_D(A)|.
\end{eqnarray}
If $A \in \A_D(G)^\circ_{\geq 4}$, then $A$ is a best offender minimal under
inclusion subject to $A \neq 1$, so we have $A_0 = 1$ and $|A|=2$. This
contradicts $|A| \geq 4$.  Hence $A \in \hat{\A}_D(G)^\circ$ and in particular
$|A| \geq 4$. But then $A_0 \in \hat{\A}_D(G)$ by \eqref{E:ineq}, which
contradicts the minimality of $A$. 
\end{proof} 

\begin{lemma}\label{L:nooveroffenders}
Let $\A = \A_D(G)_2$. Assume that $\hat{\A}_D(G) = \varnothing$ and that $A, B
\in \A$. Then 
\begin{enumerate}
\item[(a)] if $[A,B] = 1$ and $A \neq B$, then $C_D(A) \neq C_D(B)$ and $AB$ is
quadratic on $D$;
\item[(b)] if $\gen{A, B}$ is a $2$-group, then $[A,B] = 1$; and
\item[(c)] $J_\A(S)$ is elementary abelian.
\end{enumerate}
\end{lemma}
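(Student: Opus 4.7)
The plan uses throughout that $\hat{\A}_D(G) = \varnothing$ forces each $X \in \A$ to have $|X|=2$ with $|C_D(X)| = |D|/2$, and forbids any subgroup of $G$ from being an over-offender. I will prove (a), then (b), then read off (c), and finally (d). For (a), if $C_D(A) = C_D(B)$ then the Klein four-group $AB$ satisfies $|AB||C_D(AB)| = 2|D|$, so it is an over-offender; a nontrivial $X \leq AB$ maximizing $|X||C_D(X)|$ yields a member of $\hat\A_D(G)$, contradicting the hypothesis. For the quadratic claim, since $a, b$ commute as endomorphisms of $D$, $b$ preserves $[D,A] = (a{-}1)D$; as $|[D,A]|=2$ has trivial automorphism group, $b$ centralizes $[D,A]$, so $[D,A,B]=1$. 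Symmetrically $[D,B,A]=1$, and $[D,A,A]=[D,B,B]=1$ by the same argument applied to each factor, giving $[D,AB,AB]=1$.

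For (b), I argue the contrapositive: assuming $L := \langle A,B\rangle$ is a 2-group with $[A,B]\neq 1$, I derive a contradiction. Then $L$ is dihedral of order $2^k$ with $k \geq 3$, and $z := (ab)^{2^{k-2}}$ is its central involution. Because $2^{k-2}$ is even, $az$ is $L$-conjugate to $a$, so $\langle az\rangle\in\A$. Now $A$ and $\langle az\rangle$ commute and are distinct, so (a) gives $C_D(A) \neq C_D(\langle az\rangle)$. Consequently $\bar D := D/C_D(\langle a, z\rangle)$ is elementary abelian of order $4$, and the induced involutions $\bar a, \overline{az}$ on $\bar D$ are distinct transvections. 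Their product, the image $\bar z$, would have order $3$ in $GL(\bar D) \cong S_3$, contradicting $z^2 = 1$; the case $\bar z = 1$ is excluded because it would force $z$ to act trivially on $D$, violating the faithfulness of the $G$-action. Part (c) is then immediate: any two elements of $\A\cap S$ lie in the 2-group $S$ and so commute by (b), so $J_\A(S)$ is a product of commuting involutions.

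For (d), assume $[A,B] \neq 1$; by (b), $L = D_{2n}$ is not a 2-group. If $n$ is even, the argument of (b) applies verbatim to $z = (ab)^{n/2}$ (still a central involution of $L$) and the reflection $az$ (still $L$-conjugate to $a$ or $b$, and hence $\langle az\rangle\in\A$), so $n$ must be odd. The kernel $K$ of the $L$-action on $V = \Omega_1(D)$ is then a 2-group (as $\ker(\Aut(D) \to \Aut(\Omega_1(D)))$ always is), and it is contained in $\langle ab\rangle$ (the only proper normal subgroups of $D_{2n}$ for $n$ odd lie in this cyclic subgroup of odd order $n$), forcing $K = 1$. Hence the induced maps $\bar a, \bar b$ are nontrivial involutions on $V$ with $|V/C_V(\bar a)| = |V/C_V(\bar b)| = 2$, that is, transvections, and they generate $L$. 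A direct case analysis on the incidences $\phi(w), \psi(u)$ (writing $\bar a = 1+u\phi, \bar b = 1+w\psi$ with $\phi(u) = \psi(w) = 0$) shows that a subgroup of $GL(V)$ generated by two transvections is isomorphic to one of $1, \mathbb{Z}/2, (\mathbb{Z}/2)^2, S_3, D_8$; of these, only $S_3$ is nonabelian and not a $2$-group, so $L\cong S_3$ and $n = 3$.

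Finally, with $c := ab$ of order $3$ coprime to $|D|$, Maschke's idempotent argument splits $D = C_D(c) \oplus [D,c]$ as $L$-modules. The summand $[D,c]$ is a module over $\mathbb{Z}_2[\omega] := \mathbb{Z}_2[c]/(1+c+c^2)$, which is a DVR since $x^2{+}x{+}1$ is irreducible mod $2$, so $[D,c] = \bigoplus_i \mathbb{Z}_2[\omega]/2^{m_i}$ with $a$ acting as complex conjugation; a short calculation gives $|[D,c]/C_{[D,c]}(a)| = 2^{\sum m_i}$. Combined with $|D/C_D(a)| = 2$, this forces $\sum m_i = 1$, so $[D,c] \cong \mathbb{F}_4$ has order $4$ (elementary abelian) and $a$ is trivial on $C_D(c)$; by symmetry $L$ is trivial on $C_D(c) = C_D(L)$, yielding $[D,L] = [D,c]$ and $D = C_D(L) \oplus [D,L]$. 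The main obstacle is (d): the two-transvection classification and the final module computation both demand careful casework but are elementary.
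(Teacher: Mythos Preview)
Your proofs of (a) and (c) are correct and essentially match the paper's.

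For (b), there is a genuine gap. You assert that the induced maps $\bar a$ and $\overline{az}$ on $\bar D = D/C_D(\langle a,z\rangle)$ are transvections, but nontriviality is not justified: each certainly fixes the hyperplane $\overline{C_D(a)}$ (resp.\ $\overline{C_D(az)}$), but nothing you have said rules out $\bar a = 1$, i.e.\ $[D,a] \leq C_D(az)$. Your fallback claim---that $\bar z = 1$ forces $z$ to act trivially on $D$---is false: since $z \in \langle a,z\rangle$, the involution $z$ already centralizes $C_D(\langle a,z\rangle)$, and an involution acting trivially on a subgroup and on the quotient need not act trivially on the whole $2$-group. The paper takes a different route: it chooses an $X$-invariant $D_1$ with $C_D(X) < D_1 < D$, uses the Three Subgroups Lemma to reduce to the case $C_D(A) = D_1$, shows that $C := [A,B]$ centralizes $C_D(A)$, deduces $C \in \A$ with $C_D(C) = C_D(A)$, and then contradicts (a).

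This gap propagates to (d): your ``$n$ odd'' step repeats the same unjustified move. The paper avoids the issue entirely by passing to the largest odd-order subgroup $K$ of $\langle ab\rangle$. Coprimeness forces $K$ to act faithfully on $D/C_D(K)$ and gives $D = [D,K] \times C_D(K)$; combined with $|D/C_D(K)| \leq |D/C_D(L)| \leq |D/C_D(A)|\,|D/C_D(B)| = 4$, this yields $|K| = 3$, $D/C_D(K) \cong (\mathbb{Z}/2)^2$, and $C_D(K) = C_D(L)$. Then $L$ embeds in $\Aut([D,K]) \cong S_3$, so $L \cong S_3$ directly, without ever needing ``$n$ odd'' or the two-transvection classification. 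Your final $\mathbb{Z}_2[\omega]$-module computation, while correct once $L \cong S_3$ is in hand, is considerably more elaborate than the paper's argument, which extracts everything from the single inequality $|D/C_D(L)| \leq 4$.
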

\begin{proof}
Since $A$ and $B$ lie in $\A_D(G)_2$,
\begin{eqnarray}\label{E:A,Bquadratic}
[D,A,A] = 1 = [D,B,B] \text{ and } |D/C_D(A)| = 2.
\end{eqnarray}
Suppose that $[A,B] = 1$ and $A \neq B$, but that $C_D(A) = C_D(B)$. Then $AB$
is of order $4$ and so $C_D(C) = C_D(A)$ for every nontrivial subgroup $C$ of
$AB$. Since
\[
|AB||C_D(AB)| = |A||B||C_D(A)| = 2|D| > |D|,
\]
we have $AB \in \hat{\A}_D(G)$ contrary to assumption.  Thus, the first
statement of (a) holds. Since $A \in \A$, $[D, A]$ is $B$-invariant and of
order $2$. Thus $[D,A,B] = 1$. By symmetry $[D,B,A] = 1$.  By a commutator
identity and \eqref{E:A,Bquadratic}, $[D,AB,AB] = 1$, which establishes the
second statement of (a).

Suppose $\gen{A,B}$ is a $2$-group and $[A,B] \neq 1$. Let $X = \gen{A,B}$ and
$C = [A,B]$. Then $[D,C] \neq 1$ because $G$ is faithful on $D$. If $[D,A,B] =
[D,B,A] = 1$, then $1 = [A,B,D] = [C,D]$ by the Three Subgroups Lemma, a
contradiction. By the symmetry between $A$ and $B$,
\begin{eqnarray}
\label{E:dabdba}
[D,A,B] \neq 1.
\end{eqnarray}

By \eqref{E:A,Bquadratic} and \eqref{E:dabdba}, 
\[
[C_D(A),B] \geq [D,A,B] > 1.
\]
Hence, $|D/C_D(X)| = |D/(C_D(A) \cap C_D(B)| = 4$. Since $X$ is a $2$-group
that acts on $D/C_D(X)$, there is an $X$-invariant subgroup $D_1$ such that
$C_D(X) < D_1 < D$. As $D/D_1$ has order $2$, $[D,A] \leq D_1$. 

If $C_D(A) \neq D_1$, then $[D,A] \leq C_D(A) \cap D_1 = C_D(X)$, and $[D,A,B]
= 1$, contrary to \eqref{E:dabdba}. So $C_D(A) = D_1$. As $D_1/C_D(X)$ is of
order $2$, $[D_1,X] \leq C_D(X)$ and $[D_1,X,X] \leq [C_D(X),X] = 1$. Then
$[X,D_1,X] = [D_1,X,X] = 1$. Invoking the Three Subgroups Lemma again, we obtain
\begin{eqnarray}
\label{E:XXD1}
[X,X,D_1] = 1.
\end{eqnarray}

Take $C^*$ of order $2$ inside $C$. Then $[C^*,D_1] \leq [C,D_1] = 1$ by
\eqref{E:XXD1} and $C_D(A) = D_1 \leq C_D(C^*)$. Hence $C_D(A) = C_D(C^*)$
because $G$ is faithful on $D$. Therefore $C^* \in \A$. By (a), $[A,C^*] \neq 1$.
So $[A,C^*,D] \neq 1$.

However, $[D,A,C^*] \leq [C_D(A), C^*] = 1$ and similarly $[D,C^*,A] = 1$.  So
the Three Subgroups Lemma yields $[A,C^*,D] = 1$, a contradiction. This proves
(b). Part (c) then follows immediately from (b).
%
\end{proof}

\begin{lemma}
\label{L:nota2group}
Let $A, B \in \A_D(G)_2$ and set $L = \gen{A,B}$. If $L$ is not a $2$-group,
then $L \cong S_3$, $[D,L]$ is elementary abelian of order $4$, $[D,L,L] =
[D,L]$, and $D = [D,L] \times C_D(L)$. 
\end{lemma}
\begin{proof}
Since $A$ and $B$ are of order $2$, $L$ is a dihedral group.  Let $K$ be the
largest odd order subgroup of $L$. Then
\[
|D/C_D(K)| \leq |D/C_D(L)| \leq |D/C_D(A)||D/C_D(B)| = 4.
\]
Since $K$ has odd order and $D$ is a $2$-group, $K$ acts faithfully on
$D/C_D(K)$ and $D = [D,K] \times C_D(K)$. So $D/C_D(K)$ is elementary abelian
of order $4$, $|K| = 3$, and $C_D(K) = C_D(L)$. Hence $L$ acts faithfully on
$[D,K]$. We conclude that $L \cong S_3$, and that $[D,K] = [D,L] = [D,L,L]$ as
$[D,L]$ has the standard action of $L$.
\end{proof}

\begin{definition}\label{D:S3based}
Let $\A = \A_D(G)_2$, $J = J_\A(S)$, and $T \in \A \cap S$.  We say that $T$ is
\textit{solitary in} $G$ \textit{relative to} $S$ if there is a subgroup $L$ of
$G$ containing $T$ such that
\begin{enumerate}
\item[(S1)] $L \cong S_3$;
\item[(S2)] $J = T \times C_J(L)$ and $C_J(L) = \gen{(\A \cap S)-\{T\}}$; and
\item[(S3)] $D = [D,L] \times C_D(L)$ and $[D, L, C_J(L)] = 1$. 
\end{enumerate}
For a subgroup $S_0$ of $S$, we say that $T \leq S_0$ is \emph{semisolitary
relative to} $S_0$, if there are subgroups $W$ and $X$ of $D$ that are
normalized by $\gen{\A \cap S_0}$, such that 
\begin{enumerate}
\item[(SS2)] $\gen{\A \cap S_0} = T \times \gen{(\A \cap S_0) - \{T\}}$; and
\item[(SS3)] $W$ is elementary abelian of order $4$, $D = W \times X$, $T$
centralizes $X$, and $\gen{\A \cap S_0 -\{T\}}$ centralizes $W$.
\end{enumerate}

Denote by $\T_D(G)$ the collection of subgroups of $G$ that are solitary
relative to some Sylow $2$-subgroup of $G$.  Likewise, a member of $\A_D(G)_2$
is said to be semisolitary if it is semisolitary relative to some Sylow
$2$-subgroup of $G$.  
\end{definition}

\begin{remark}\label{R:S3basedimpliesS3similar}
Given a subgroup $T$ which is solitary in $G$ relative to $S$, and given $L$ as
in Definition~\ref{D:S3based}, we may take $W = [D,L]$ and $X = C_D(L)$ and see
from Lemma~\ref{L:nota2group} that $T$ is semisolitary relative to $S$. 
\end{remark}

A solitary offender is of order $2$ (by definition), and thus is generated by a
transvection when $D$ is elementary abelian. If $G = S_3$ and $D = C_2 \times
C_2$, then one may take $L = G$ to see that each subgroup of order $2$ in $G$
is solitary. More generally, if $G$ a symmetric group of odd degree and $D$ is
a natural module for $G$, then each subgroup generated by a transposition is
solitary.  Indeed, given a Sylow $2$-subgroup $S$ of $G$ containing the
transposition, $L$ may be taken in this case to move only three points, namely
the two points moved by the transposition and the unique point fixed by $S$.
On the other hand, (S2) implies that $SL_n(2)$ ($n \geq 3$) and even degree
symmetric groups, for example, have no solitary offenders on their respective
natural modules, despite being generated by transvections. We refer the reader
to Lemma~\ref{L:soltrans} for more details. 

There is no ``(SS1)'' in Definition~\ref{D:S3based} because we view (SS2) and
(SS3) as weakenings of (S2) and (S3).  The reader should view the introduction
of semisolitary offenders as auxiliary. They are used in relative situations
when the connection between solitary offenders in $G$ and those in a subgroup
$H$ is difficult to ascertain. The following elementary lemma addresses a
similar uncertainty.

\begin{lemma}\label{L:TcapS}
Assume that $\hat{\A}_D(G) = \varnothing$.  Then $\T_D(G) \cap S$ is the set of
subgroups that are solitary in $G$ relative to $S$.
\end{lemma}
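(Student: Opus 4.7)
The inclusion $\supseteq$ is immediate from the definition of $\T_D(G)$: if $T \leq S$ is solitary in $G$ relative to $S$, then $T \in \T_D(G)$, and hence $T \in \T_D(G) \cap S$.

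For the reverse inclusion, suppose $T \in \T_D(G) \cap S$, so $T \leq S$ and there is a Sylow $2$-subgroup $S^*$ of $G$ with a subgroup $L \cong S_3$ containing $T$ witnessing the solitary nature of $T$ relative to $S^*$. The approach is to produce an element $n \in N_G(T)$ satisfying $S^{*n} = S$. Given such an $n$, the subgroup $L^n \cong S_3$ contains $T^n = T$, and conjugation by $n$ transfers the conditions (S1)--(S3) from the triple $(T, L, S^*)$ to $(T, L^n, S)$. The transfer of (S2) uses the $G$-invariance of $\A := \A_D(G)_2$, which yields $J_\A(S^*)^n = J_\A(S^{*n}) = J_\A(S)$.

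To find $n$, I would argue via Sylow's theorem in $N_G(T)$. Since $T \leq S \cap S^*$, both $N_S(T)$ and $N_{S^*}(T)$ are $2$-subgroups of $N_G(T)$. The crucial intermediate step is to show, using the direct-factor decomposition $J_\A(S^*) = T \times C_{J_\A(S^*)}(L)$ from (S2) together with the elementary-abelian structure of $J_\A(S^*)$ from Lemma~\ref{L:nooveroffenders}(c), that both $N_S(T)$ and $N_{S^*}(T)$ are in fact Sylow $2$-subgroups of $N_G(T)$; equivalently, that $T$ is fully normalized in every Sylow $2$-subgroup of $G$ containing it. Sylow's theorem in $N_G(T)$ then conjugates $N_S(T)$ to $N_{S^*}(T)$ by some $n_0 \in N_G(T)$. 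A standard lifting argument, exploiting that $J_\A(S)$ is weakly closed in $S$ with respect to $G$ (which follows from the elementary-abelian structure provided by Lemma~\ref{L:nooveroffenders}(c): if $J_\A(S)^g \leq S$ then $\A \cap S^g \subseteq \A \cap S$ and a cardinality comparison gives equality), then upgrades $n_0$ to an element $n \in N_G(T)$ satisfying $S^{*n} = S$.

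The main obstacle I expect is the full-normalization step: showing that the solitary hypothesis forces $N_S(T)$ to be a Sylow $2$-subgroup of $N_G(T)$. Without the solitary hypothesis this property can fail for a general $2$-subgroup of $S$, so the argument must genuinely exploit the rigidity imposed by (S2) and (S3), together with the global assumption $\hat{\A}_D(G) = \varnothing$, to pin down the embedding of $T$ in every Sylow $2$-subgroup containing it.
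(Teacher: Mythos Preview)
Your plan aims for more than is needed and, as you yourself flag, leaves a real gap at the full-normalization step: you never prove that $N_S(T)$ and $N_{S^*}(T)$ are Sylow in $N_G(T)$, and nothing in the solitary hypothesis obviously forces this. The paper's argument sidesteps this entirely by working with $J = J_\A(S)$ rather than with $S$.

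The paper takes any $g \in G$ with $S_1^g = S$ (your $S^*$ is the paper's $S_1$). Then $J_1^g = J$, and both $T$ and $T^g$ lie in the abelian, weakly closed subgroup $J$. Burnside's fusion lemma (Lemma~\ref{L:burnside}) then gives $h \in N_G(J)$ with $T^{gh} = T$, and one sets $L = L_1^{gh}$. The point you are missing is that conditions (S1)--(S3) reference $S$ only through $J$ and through the set $\A \cap S$, and that $\A \cap S = \A \cap J$: every member of $\A_D(G)_2 \cap S$ lies in $J$ by definition of $J$, and conversely $J \leq S$. Hence any element of $N_G(J)$ permutes $\A \cap S$, and conjugation by $gh$ carries (S2) for $(T, J_1, \A \cap S_1)$ over to (S2) for $(T, J, \A \cap S)$ without ever requiring $S_1^{gh} = S$. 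Condition (S3) transfers automatically since it only involves $D$, $L$, and $C_J(L)$.

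So there is no need to conjugate the full Sylow subgroups while fixing $T$; it is enough to conjugate $T$ to itself inside the weakly closed abelian subgroup $J$, and Burnside does exactly that.
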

\begin{proof}
One containment follows from the definition.  For the other containment, set
$\A = \A_D(G)_2$ and assume $T \in \T_D(G) \cap S$.  Suppose $T$ is solitary
relative to the Sylow $2$-subgroup $S_1$ of $G$, and let $g \in G$ with $S_1^g
= S$. Set $J_1 = \gen{\A \cap S_1}$ and $J = \gen{\A \cap S}$. Then $J_1$ is
elementary abelian by Lemma~\ref{L:nooveroffenders}(c), and $T^g \leq
J_1^g = J$.  Fix a subgroup $L_1 \cong S_3$ containing $T$ so that (S1)-(S3)
holds with $S_1$ and $J_1$ in the roles of $S$ and $J$, respectively.  Since
$J$ is abelian and weakly closed in $S$ with respect to $G$, there is $h \in
N_G(J)$ with $T^{gh} = T$ (by Lemma~\ref{L:burnside}).  Setting $L = L_1^{gh}$,
one establishes the validity of (S1)-(S3) in Definition~\ref{D:S3based} for
$L$, $S$, and $J$ from their validity for $L_1$, $S_1$, and $J_1$.   
\end{proof}

The following lemma will be applied later with $J^*$ equal to the subgroup
generated by members of $\A_D(G)_2 \cap S$ that are not semisolitary relative
to $S$.

\begin{lemma}\label{L:nonS3basednorm}
Set $\A = \A_D(G)_2$, $\T = \T_D(G)$, and $\B = \A - \T$. Assume  
\begin{enumerate}
\item[(a)] $\hat{\A}_D(G) = \varnothing$; and
\item[(b)] $\B \neq \varnothing$. 
\end{enumerate}
Then for each subgroup $J^* \leq J_\B(S)$ that is weakly closed in $S$ with
respect to $G$, $\A \cap S$ satisfies \eqref{E:gennormcond2} with $H =
N_G(J^*)$.
\end{lemma}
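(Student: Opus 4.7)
The proof begins by recording structural properties of $J := J_\A(S) = \langle \A \cap S\rangle$. By Lemma~\ref{L:nooveroffenders}(c), $J$ is elementary abelian. Moreover, Lemma~\ref{L:nooveroffenders}(a) shows that for any pair $A, B \in \A \cap S$, the product $AB$ acts quadratically on $D$; since $J$ is abelian and generated by such commuting involution pairs, $J$ itself acts quadratically on $D$, hence on $V = \Omega_1(D)$. Since $J^*$ is weakly closed in $S$ with respect to $G$, the Sylow subgroup $S$ normalizes $J^*$, giving $J \leq S \leq H = N_G(J^*)$. Fix $g \in G$ with $J \not\leq H^g$ and set $J_0 := J \cap H^g$. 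If $J^{*g} \leq S$, weak closure would force $J^{*g} = J^*$ and $H^g = H \supseteq J$, a contradiction; so $J^{*g} \not\leq S$.

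The argument then proceeds by a case split on the index $[J : J_0]$. When $[J : J_0] \geq 4$, Lemma~\ref{L:quadnorm}(b)(ii), applied to the quadratic action of the elementary abelian group $J$ on $V$, yields $\N_{J_0}^J = 1$ on $V$ directly. The remaining case $[J : J_0] = 2$ is the crux.

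In this case I write $J = A \times J_0$ for some $A \in \A \cap S$ with $A \not\leq J_0$ (such $A$ exists because $\A \cap S$ generates $J$). Since $V$ has exponent $2$, the condition $\N_{J_0}^J(v) = v \cdot v^a = 1$ for every $v \in C_V(J_0)$ (with $a$ generating $A$) reduces to the statement that $A$ centralizes $C_V(J_0)$. Using quadraticity together with the transvection bound $|[V,A]| \leq 2$ coming from $A \in \A_D(G)_2$, the commutator map $v \mapsto [v,a]$ gives an injection $C_V(J_0)/C_V(J) \hookrightarrow [V,A]$, so either $C_V(J_0) = C_V(J)$ (norm trivial, done), or $|C_V(J_0)/C_V(J)| = 2$. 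I plan to rule out the latter alternative using hypotheses (a) and (b). The main obstacle is to show that the assumption $|C_V(J_0)/C_V(J)| = 2$ is incompatible with $J^* \leq J_\B(S)$ being weakly closed and $\B \neq \varnothing$.

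The intended argument splits further on whether some $B \in \B \cap S$ lies outside $J_0$. If it does, one applies Lemma~\ref{L:nooveroffenders}(d) to the pair $\langle A, B \rangle$ (after replacing $A$ by $B$ in the role above) to extract the $S_3$-structure $L = \langle A, B\rangle$ with $[D,L]$ of order $4$ and $D = [D,L] \times C_D(L)$, leading to a decomposition incompatible with $B \in \B$ via Remark~\ref{R:S3basedimpliesS3similar}; if instead $J_\B(S) \leq J_0$, then $A$ must be solitary relative to $S$ by a direct verification of (S1)--(S3), in which case the witness $L_A \cong S_3$ satisfies $L_A \leq C_G(J^*) \leq H$, so $L_A^g \leq H^g$; combined with $J^{*g} \not\leq S$ and Lemma~\ref{L:TcapS}, a suitable $S$-conjugation adjustment using weak closure of $J^*$ forces $A \leq H^g$, contradicting the choice of $A$. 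This case analysis, where the delicate interplay between the conjugation action of $J$ on $G$-conjugates of $J^*$ and the $L$-structure around solitary offenders is exploited, is the technical heart of the proof.
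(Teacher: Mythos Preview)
Your setup through the case $[J:J_0]\geq 4$ is fine, and in fact your observation that $J$ itself acts quadratically on $D$ (since $[D,a_i,a_j]=1$ for all generators $a_i,a_j\in\A\cap S$) lets you handle that case a bit more directly than the paper, which writes $J=J_0\times A\times B$ and appeals only to quadraticity of $AB$.

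The genuine gap is in the index-$2$ case. Your first branch (``some $B\in\B\cap S$ lies outside $J_0$'') invokes Lemma~\ref{L:nooveroffenders}(d) for $\langle A,B\rangle$, but $A$ and $B$ both lie in the abelian group $J$, so $[A,B]=1$ and that lemma does not apply; there is no $S_3$ to extract from two commuting involutions in $J$. Your second branch assumes $A$ is solitary ``by a direct verification of (S1)--(S3)'', but you give no construction of the witness $L\cong S_3$, and the subsequent chain $L_A\leq H\Rightarrow L_A^g\leq H^g\Rightarrow A\leq H^g$ is a non sequitur: knowing $A\leq L_A\leq H$ says nothing about membership in the \emph{conjugate} $H^g$.

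The missing idea is that the partner for the $S_3$ must come from the conjugate $J^g$, not from inside $J$. The paper first uses $\N_I^J\neq 1$ to prove a replacement statement: for each $A\in\A\cap S$ with $A\neq T$ (where $J=I\times T$) there is $B\in\A\cap I$ with $AT=BT$; in particular $I=\langle\A\cap I\rangle$. After conjugating $I$ into $S$ and adjusting $g$ so that $I\leq J\cap J^g$, one picks $U\in(\A\cap S)^g$ with $U\nleq J$ and sets $L=\langle T,U\rangle$. Then $[I,L]=1$, $L\cong S_3$ by Lemma~\ref{L:nooveroffenders}(d), and one verifies (S1)--(S3) for $T$ using the replacement statement. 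The contradiction is not that $T$ turns out solitary per se, but that solitarity forces $\A\cap S-\{T\}\subseteq I$, hence $J^*\leq J_\B(S)\leq I\leq J^g$, so $(J^*)^{g^{-1}}\leq J\leq S$; weak closure of $J^*$ then gives $g\in N_G(J^*)=H$, contradicting $J\nleq H^g$. Your case split on whether $\B\cap S\subseteq J_0$ is unnecessary once this is in place.
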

\begin{proof}
Set $J = J_\A(S)$ for short, and let $J^*$ be a subgroup of $J_\B(S)$ that is
weakly closed in $S$ with respect to $G$. Then $J^* \leq J_\B(S) \leq J$ since
$\B \subseteq \A$, and all of these are weakly closed in $S$ with respect to
$G$, since $\A$ and $\B$ are $G$-invariant.
From (a) and Lemma~\ref{L:nooveroffenders}(c) 
\begin{eqnarray}\label{E:Jabelian}
J \text{ is elementary abelian.}
\end{eqnarray}
Since $J^*$ is weakly closed, 
\begin{eqnarray}
\label{E:HcontainsNJ}
S \leq N_G(J) \leq N_G(J^*) = H.
\end{eqnarray}

Assume \eqref{E:gennormcond2} is false. That is, let $g \in G$ with $J \nleq
H^g$, set $I = J \cap H^g$, and suppose $\N_I^J \neq 1$ on $V = \Omega_1(D)$. 

Suppose first that $|J:I| \geq 4$. Let $J_0$ be a subgroup of $J$ with $I \leq
J_0$ and $|J:J_0| = 4$. By \eqref{E:Jabelian}, and since $J$ is generated by
members of $\A_D(G)_2$, we may write $J = J_0 \times A \times B$ with $A, B \in
\A$. Then $\N_I^J = \N_{J_0}^J\N_{I}^{J_0}$ and so $\N_{J_0}^J \neq 1$ on $V$.
However, $AB$ acts quadratically on $C_V(J_0)/C_V(J)$ by
Lemma~\ref{L:nooveroffenders}(a). Thus, $\N_{J_0}^J = 1$ on $V$ by
Lemma~\ref{L:quadnorm}(b)(ii), a contradiction. We conclude that
\begin{eqnarray}\label{E:|J:I|=2} |J\colon I| = 2.
\end{eqnarray}

Fix $T \in \A$ with $J = IT = I \times T$. Let $A \in \A \cap S$ and suppose
that $A \neq T$. Let $B = AT \cap I$. Then $B$ is of order $2$ and $|D/C_D(B)|
\leq |D/C_D(A)||D/C_D(T)| = 4$. If $|D/C_D(B)| = 4$, then 
\[
C_D(I) \leq C_D(B) = C_D(A) \cap C_D(T) \leq C_D(T).
\]
Consequently, $J = IT$ centralizes $C_D(I)$, and $C_D(I) = C_D(J)$. Hence, also
$C_V(I) = C_V(J)$.  So $\N_I^J = 1$ on $V$ in this case by
Lemma~\ref{L:quadnorm}(b)(i), a contradiction.  Thus, $|D/C_D(B)| = 2$. That
is, $B \in \A$. This shows
\begin{eqnarray}\label{E:AT=BT}
\text{\qquad if $A \in \A \cap S$, then $A = T$  or there exists $B \in \A \cap I$ such that
$AT = BT$.}
\end{eqnarray}
In particular, $J = \gen{\A \cap S} = \gen{\A \cap I}T$, and so
\begin{eqnarray}\label{E:Igen}
I = \gen{\A \cap I}.
\end{eqnarray}

Recall that $I = J \cap H^g$, so that $I^{g^{-1}} \leq H$. Let $h \in H$ such
that $I^{g^{-1}h} \leq S$.  Then
we have 
\[
I^{g^{-1}h} = \gen{\A \cap I}^{g^{-1}h} \leq \gen{\A \cap S} = J
\]
by \eqref{E:Igen}. As $H^{h^{-1}g} = H^g$, we may replace $g$ by $h^{-1}g$ for
convenience so that
\begin{eqnarray}\label{E:IinJcapJ^g}
I = J \cap H^g \leq J \cap J^g.
\end{eqnarray}

We now show that $T$ is solitary in $G$ relative to $S$.  Since $J^g \neq J$,
we may choose $U \in (\A \cap S)^g$ such that $U \nleq J$. Then $U \leq J^g$.
Let $L = \gen{T,U}$.  Since $T \leq J$ and $J$ is abelian, 
\[
[I,L] = 1
\]
by \eqref{E:IinJcapJ^g}. 

We first show that $T$ and $U$ do not commute. Suppose on the contrary that
$[T,U] = 1$. Then $IL$ is a $2$-group generated by members of $\A$, and hence
is conjugate to a subgroup of $J$. Then since $J = IT \leq IL$, we see that $J
= IL$. Thus $U \leq L \leq J$, contrary to the choice of $U$. 

Thus,
\begin{eqnarray}\label{E:s3based1} 
L \cong S_3, \,\, |[D,L]| = 4, \text{ and }
D = [D,L] \times C_D(L)
\end{eqnarray}
by (a) and Lemma~\ref{L:nota2group}, with the factors $[D,L]$ and
$C_D(L)$ invariant under $J$. Further, $C_J(L) = I$ by the structure of $L$,
and so $J = T \times I = T \times C_J(L)$ by choice of $T$. 

Since $[D,T]$ is of order $2$ and $I$-invariant, $[D,T,I] = 1$. Since $I^x = I$
and $[D,T^x] = [D,T]^x \leq C_D(I^x) = C_D(I)$ for every $x \in L$,
\begin{eqnarray}\label{E:DLI=1}
[D,L] \leq C_D(I) = C_D(C_J(L)).
\end{eqnarray}

We have shown that (S1), (S3), and half of (S2) hold in
Definition~\ref{D:S3based}; it remains to prove that for each $A \in \A \cap
S$, either $A = T$ or $A \leq I$. Fix $A \in \A \cap S$ and suppose that $A
\neq T$ and $A$ is not contained in $I$. By \eqref{E:AT=BT}, there is $B \in \A
\cap I$ with $AT = BT$. By \eqref{E:DLI=1}, $B$ centralizes $[D,L]$.
From $C_D(L) \leq C_D(T)$ and \eqref{E:s3based1} it follows that
\[
C_D(A) = C_{[D,L]}(A) \times C_{C_D(L)}(A) = C_{[D,L]}(T) \times (C_D(L) \cap C_D(B)).
\]
Therefore,
\[
|D/C_D(A)| = |C_D(L)/(C_D(L) \cap C_D(B))|\cdot |[D,L]/C_{[D,L]}(T)| = 2\cdot 2 = 4
\]
contrary to $A \in \A$. This contradiction shows that $\A \cap S - \{T\} = \A
\cap I$ and together with \eqref{E:Igen} completes the proof of (S2). Thus, $T
\in \T$. 

Recall that $\B$ consists of the members of $\A$ that are not solitary in
$G$ relative to $S$, and that $J^* \leq J_\B(S)$.  Since $T$ was chosen
arbitrarily subject to $T \leq J$ and $T \nleq I$, we have shown that every
member of $\B$ is contained in $I$; that is, $J_\B(S) \leq I = J \cap J^g$.
Thus, $(J^*)^{g^{-1}} \leq J_\B(S)^{g^{-1}} \leq J$.  Since $J^*$ is weakly
closed in $S$ with respect to $G$ by assumption, it follows that $g \in
N_G(J^*) = H$ and consequently that $J \leq H = H^g$. This contradicts the
choice of $g$ and completes the proof of the lemma.  
\end{proof}

\begin{lemma}\label{L:SSperm}
Let $\T$ be a subset of $\A_D(G)_2 \cap S$ and $\Y = \{[D,T] \mid T \in \T\}$.
Fix $A \in \A_D(G)$ and set $B = C_A(\gen{\Y})$.  Assume each member of $\T$ is
semisolitary relative to a fixed subgroup of $S$ that contains $\gen{\T}$. Then
\begin{enumerate}
\item[(a)] distinct elements of $\T$ have distinct commutators on $D$, and
$\gen{\Y}$ is the direct product of $[D,T]$ for $T \in \T$; and 
\item[(b)] if $A$ acts transitively on $\T$ by conjugation, then either $A =
B$, or $|\T| = |\Y| = 2$, $B$ has index $2$ in $A$, $C_D(A)$ has index
$2$ in $C_D(B)$, and each element of $A-B$ induces a transposition on $\T$. 
\end{enumerate}
\end{lemma}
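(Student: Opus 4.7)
The plan is to establish (a) in two steps—first distinctness of $\Y$, then the direct product structure—using (SS2) and (SS3) respectively, and then to deduce (b) from abelian transitive action theory combined with the best offender inequality on $A$.

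For the distinctness step of (a), suppose $[D,T_1] = [D,T_2]$ for distinct $T_1, T_2 \in \T$, with $T_i = \gen{t_i}$. As in the proof of Lemma~\ref{L:nooveroffenders}(a), the transvections $t_1, t_2$ have a common direction and so commute, and a direct calculation shows $\gen{t_1 t_2}$ is another member of $\A_D(G)_2 \cap S$, distinct from $T_1$ and $T_2$. Then $T_1 \leq \gen{t_1, t_2} = T_2 \cdot \gen{t_1 t_2}$ sits in $\gen{(\A \cap S) - \{T_1\}}$, contradicting (SS2) for $T_1$. To upgrade from distinctness to the direct product structure, I would exploit the decompositions $D = W_T \oplus X_T$ provided by (SS3): for distinct $T, T' \in \T$, the subgroup $W_T$ lies in $C_D(T') = [D,T'] \oplus X_{T'}$, so $W_T \cap W_{T'} \leq W_{T'} \cap C_D(T') = [D,T']$, and distinctness of $[D,T]$ and $[D,T']$ forces $W_T \cap W_{T'} = 0$. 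The resulting controlled behavior of projections $\gen{\Y} \to W_T$ should yield linear independence of the $[D,T]$ in $\gen{\Y}$.

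For (b), assume $A$ acts transitively on $\T$. Because $A$ is abelian, all point stabilizers coincide with a common subgroup $K \leq A$, and $|A/K| = |\T|$. Since $K$ normalizes each $T \in \T$, which is of order $2$, $K$ centralizes each $T$ and hence each $[D,T]$, so $K \leq B$. The action of $A/K$ on $\gen{\Y}$ is by permutation of the distinct subgroups $[D,T]$; being regular on $\Y$, it is faithful on the permutation module $\gen{\Y}$ (using the direct product structure of (a)), so $B = K$ and $|A/B| = |\T|$. The best offender hypothesis on $A$ gives $|A/B| \geq |C_D(B)/C_D(A)|$. Since $\gen{\Y} \leq C_D(B)$ and $C_{\gen{\Y}}(A)$ is the one-dimensional fixed subspace (spanned by the orbit sum) of the transitive permutation action of $A/B$ on the basis $\Y$, we obtain
\[
|C_D(B)/C_D(A)| \geq |\gen{\Y}/C_{\gen{\Y}}(A)| = 2^{|\T|-1}.
\]
Combining these gives $|\T| \geq 2^{|\T|-1}$, forcing $|\T| \in \{1, 2\}$. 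The case $|\T| = 1$ yields $A = B$ at once. In the case $|\T| = 2$, we have $|A/B| = 2$, the unique nontrivial coset of $B$ in $A$ induces the transposition on $\T$, and equality throughout the chain forces $|C_D(B)/C_D(A)| = 2$.

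The main obstacle lies in the direct product assertion of (a) beyond mere distinctness: extracting linear independence from the $W_T$--$X_T$ decompositions jointly over $\T$ is delicate and appears to require careful bookkeeping of how the $W_T$'s sit inside one another's complements. Once (a) is in hand, the argument for (b) reduces to a clean application of the best offender inequality to the permutation module $\gen{\Y}$.
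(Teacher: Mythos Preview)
Your argument for (b) is correct and essentially identical to the paper's: identify $B$ with the kernel of the permutation action on $\Y$, use that $\gen{\Y}$ is a transitive permutation module so $|C_{\gen{\Y}}(A)|=2$, and combine the best-offender inequality $|A/B|\ge |C_D(B)/C_D(A)|$ with $|C_D(B)/C_D(A)|\ge 2^{|\Y|}/2$ to force $|\Y|\le 2$.

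For (a), however, your plan has a genuine gap, and you have correctly identified where it lies. Your distinctness step via (SS2) is valid but unnecessary. The real issue is that the pairwise condition $W_T\cap W_{T'}=1$ does not by itself yield the direct product of the $[D,T]$: from $W_{T'}\le C_D(T)=[D,T]\times X_T$ you only get $[D,T']\le [D,T]\times X_T$, so the projection $D\to W_T$ along $X_T$ need not kill $[D,T']$. Concretely, nothing you have written rules out a relation like $d_{T_0}=\prod_{T\ne T_0} d_T$ with $d_T\in [D,T]$.

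The missing (and very short) observation is that (SS3) also says $W_T$ and $X_T$ are \emph{normalized} by $\gen{\A\cap S}$, not merely that $W_T$ is centralized by the other offenders. Hence for $R\ne T$ in $\T$ you have $[D,R]=[W_T\times X_T,R]=[W_T,R][X_T,R]=[X_T,R]\le X_T$, while $[D,T]=[W_T,T]\le W_T$. Then
\[
[D,T]\cap\prod_{R\ne T}[D,R]\ \le\ W_T\cap X_T\ =\ 1,
\]
which gives the direct product (and distinctness) in one stroke. This is exactly the paper's argument; once you use the normalizing clause of (SS3), no bookkeeping or separate distinctness step is needed.
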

\begin{proof}
Whenever $T \in \T$, set $Z_T = [D,T]$ for short.  To prove (a),
\begin{eqnarray}\label{E:ijint}
\text{it suffices to show that $Z_T \cap \prod_{R \neq T} Z_R = 1$ for each $T \in \T$.}
\end{eqnarray}
For each $R$ in $\T$, let $W_R$ and $X_R$ be as in Definition~\ref{D:S3based}
in the roles of $W$ and $X$.  For each $R \in \T$,
\[
Z_R = [D, R] = [W_RX_R, R] = [W_R, R] \leq W_R,
\]
so for each $T \in \T$ different from $R$,
\begin{eqnarray}
\label{E:ZjUi}
Z_R = [D,R] = [W_TX_T, R] = [X_T, R] \leq X_T, 
\end{eqnarray}
since $R$ centralizes $W_T$ and normalizes $X_T$ by Definition~\ref{D:S3based}.
Therefore, 
\[
Z_T \cap \prod_{R \neq T} Z_R \leq W_T \cap X_T = 1,
\]
by (SS3), and part (a) now follows from \eqref{E:ijint}. 

Assume $A$ acts transitively on $\T$ by conjugation. Then $A$ acts on $\Y$ in
the same way.  Now $\gen{\Y}$ is a transitive permutation module for $A$ by
part (a), so $|C_{\gen{\Y}}(A)| = 2$.   Suppose $A > B$ and set $m = |A/B|$.
Then $|\Y| = m > 1$ and $|A||C_D(A)| \geq |B||C_D(B)|$, so that 
\begin{eqnarray}\label{E:treq}
m = |A/B| \geq |C_D(B)/C_D(A)| \geq |C_{\gen{\Y}}(B)/C_{\gen{\Y}}(A)| = 2^m/2. 
\end{eqnarray}
Hence $m = 2$ and equality holds in \eqref{E:treq}, so $|C_D(B)/C_D(A)| = 2$.
Since $A \in \A_D(G)$, an element of $A-B$ must act as a transposition on $\Y$
and on $\T$.
\end{proof}

\begin{lemma}\label{L:SSpermlocal}
Fix a subgroup $P \leq S$. Let $\T \subseteq \A_D(G)_2 \cap P$ be the
collection of all subgroups that are semisolitary relative to $P$, and let
$A \in \A_D(G)^\circ \cup \hat{\A}_D(G)^\circ$.  If $A$ normalizes $P$, then
$A$ normalizes every element of $\T$. 
\end{lemma}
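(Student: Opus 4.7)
The plan is to show that every $A$-orbit on $\T$ has length $1$. Since $A$ normalizes $P$, conjugation by $A$ permutes $\A_D(G)_2 \cap P$, and the semisolitary property relative to $P$ is manifestly preserved by any subgroup that normalizes $P$; hence $A$ permutes $\T$. Fixing $T \in \T$ and writing $\T_0$ for its $A$-orbit, it suffices to show $|\T_0| = 1$.

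The main tool is Lemma~\ref{L:SSperm}, whose proof uses only the semisolitary data and the $G$-invariance of $\A_D(G)_2$, not the assumption that $S$ is a Sylow subgroup; I apply it with $P$ in place of $S$ and $\T_0$ in place of $\T$. Part~(a) identifies $\gen{\Y_0} = \bigoplus_{T' \in \T_0} [D, T']$ as an elementary abelian $2$-group of rank $|\T_0|$ on which $A$ acts by permuting the direct summands. Since $A$ acts transitively on $\T_0$, part~(b) gives the dichotomy: either $A$ centralizes $\gen{\Y_0}$, or $|\T_0| = 2$ with the additional structure there stated.

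In the first alternative, $\gen{\Y_0}$ is a transitive $A$-permutation module with trivial $A$-action, so $|\gen{\Y_0}| = |C_{\gen{\Y_0}}(A)| = 2$, forcing $|\T_0| = 1$ as required.

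In the second alternative, I would pick $T, T' \in \T_0$ distinct and $a \in A - B$ with $T^a = T'$, and choose semisolitary data $(W, X)$ for $T$ relative to $P$. Since $A$ normalizes $\gen{\A_D(G)_2 \cap P}$, the pair $(W^a, X^a)$ constitutes valid semisolitary data for $T'$. Applying (SS3) for both $T$ and $T'$, together with the observations that $T$ centralizes $W^a$ and $T'$ centralizes $W$, a short projection computation in the two decompositions $D = W \oplus X = W^a \oplus X^a$ yields $[D,T] \leq W \cap X^a$, $[D,T'] \leq X \cap W^a$, and $W \cap W^a = 0$. The hard part will be to convert this rigid structure of two competing $\gen{\A_D(G)_2 \cap P}$-invariant decompositions of $D$ into an outright contradiction. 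I plan to exploit the best offender hypothesis on $A$, for example by exhibiting an abelian subgroup of $G$ (such as $BT$, where $B$ is the stabilizer in $A$ of $T$, which is a group since $B$ centralizes $T$) whose offender magnitude $|BT|\cdot|C_D(BT)|$ weakly dominates that of $A$ and, under the direct-sum constraints just obtained, is forced strictly upward into $\hat{\A}_D(G)$ in a way incompatible with $A$'s best offender status.
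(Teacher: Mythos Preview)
Your reduction to an $A$-orbit $\T_0$ and the handling of the first alternative ($A=B$, forcing $|\T_0|=1$) are fine and agree with the paper. The problem is the second alternative: you yourself flag it as ``the hard part'' and only sketch a direction. That direction does not work as stated. Producing an abelian subgroup such as $BT$ with large offender magnitude, even one landing in $\hat{\A}_D(G)$, does \emph{not} contradict the best-offender property of $A$: best offender compares $A$ only against its own subgroups, and $BT$ is not a subgroup of $A$. Nor is there any standing hypothesis that $\hat{\A}_D(G)=\varnothing$ in this lemma. So the proposed endgame has no target to hit.

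The paper's argument in the two-element orbit case $\{T,R\}$ is short and stays with the \emph{original} semisolitary decomposition $D=W\times X$ for $T$ (it does not pass to $(W^a,X^a)$). The crucial input from (SS3) is that $R$, being in $(\A\cap P)-\{T\}$, centralizes $W$. One then shows $C_W(A)=1$: if not, then $T$ cannot centralize $C_W(A)$ (else $C_W(A)\leq C_W(T)=Z_T$, so $Z_T=C_W(A)$ is $A$-fixed, contradicting that $A$ swaps $Z_T$ and $Z_R$); hence $[C_W(A),T]=Z_T$, and for $a\in A$ with $T^a=R$ one gets
\[
Z_R=(Z_T)^a=[C_W(A),T]^a=[C_W(A),R]\leq [W,R]=1,
\]
a contradiction. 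Now $C_W(A)=1$ with $|W|=4$ forces $|C_D(B)/C_D(A)|\geq 4$, directly contradicting the equality $|C_D(B)/C_D(A)|=2$ supplied by Lemma~\ref{L:SSperm}(b). The missing idea in your attempt is exactly this $C_W(A)=1$ computation, which uses that $a$ fixes $C_W(A)$ elementwise while sending $T$ to $R$, together with $[W,R]=1$.
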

\begin{proof}
Set $Z_T = [D,T]$ whenever $T \in \T$, for short.  By assumption $A$ acts on
$\T$. By Lemma~\ref{L:SSperm}(b), each orbit of $A$ on $\T$ has size at most
$2$. We assume that $\{T,R\}$ is a nontrivial orbit and aim for a
contradiction. Set $\Y = \{Z_T, Z_R\}$ and $B = C_A(\gen{\Y})$. Then $B$ has
index $2$ in $A$, and $C_D(A)$ has index $2$ in $C_D(B)$ by
Lemma~\ref{L:SSperm}(b), so that 
\[
|B||C_D(B)| = |A||C_D(A)|. 
\]
Since $A \in \A$, we obtain by minimality of $A$ that $B = 1$ and $|A| = 2$.
Hence $C_D(B) = D$ and
\begin{eqnarray}
\label{E:A=2}
|D/C_D(A)| = 2.
\end{eqnarray}
Since $T$ is semisolitary relative to $P$, we may choose subgroups $W$ and $X$
of $D$ such that $D = W \times X$, $|W| = 4$, $T$ centralizes $X$, $[W,T] = Z_T
= C_W(T)$, and $R$ centralizes $W$. Further, 
\begin{eqnarray}
\label{E:C_W(A)}
C_W(A) > 1
\end{eqnarray}
by \eqref{E:A=2}.

On the other hand, since $A$ transposes $Z_T = C_W(T)$ and $Z_R$, and
since $T$ does not centralize $W$, it follows from \eqref{E:C_W(A)}
that $T$ does not centralize $C_W(A)$. Therefore, $[C_W(A), T] = Z_T$, and so
for the generator $a$ of $A$, 
\[
Z_R = (Z_T)^a = [C_W(A),T^a] = [C_W(A),R] \leq [W,R] = 1,
\]
a contradiction.
\end{proof}

\section{Reduction to the transvection case}\label{S:reduction2}

The objective of this section is to give a proof of
\cite[Proposition~3.2]{Oliver2013} in the case where some minimal offender
under inclusion is not solitary. This result is obtained in
Theorem~\ref{T:ntlim}.

Throughout this section, we fix a finite group $\Gamma$ with Sylow $p$-subgroup
$S$, we set $\F = \F_S(\Gamma)$, and we let $\Q \subseteq \F^c$ be an
$\F$-invariant interval such that $S \in \Q$. In this situation, define
$\Gamma^*$ to be the set of elements of $\Gamma$ that conjugate some member of
$\Q$ into $\Q$.  We say that a $1$-cocycle for the functor $\Z^\Q_\F$ is
\emph{inclusion-normalized} if it sends the class $[\iota_P^Q] \in
\Mor_{\O(\F^c)}(P,Q)$ of any inclusion $\iota_P^Q$ to the identity element of
$Z(P)$ for each $P,Q \in \Q$.  In what follows, we only specify $0$- and
$1$-cochains for the functor $\Z^\Q_\F$ on subgroups in $\Q$, and it is to be
understood that they are the identity on $\F$-centric subgroups outside $\Q$.
Alternatively, apply the isomorphism of cochain complexes in
Lemma~\ref{L:olijm}(a) to view these cochains as restrictions to the full
subcategory of $\O(\F^c)$ with objects in $\Q$.  The reader may wish to recall
the coboundary maps for $0$- and $1$-cochains in our right-handed notation from
\eqref{E:0cob} and \eqref{E:1cob}.

\begin{lemma}\label{L:1cocyclesetup}
Each $1$-cocycle for $\Z^\Q_\F$ is cohomologous to an inclusion-normalized
$1$-cocycle. If $t$ is an inclusion-normalized $1$-cocycle, then 
\begin{enumerate}
\item[(a)] $t([\phi_1]) = t([\phi_2])$ for each commutative diagram
\[
\xymatrix{
P_2 \ar[r]^{\phi_2}  & Q_2\\
P_1 \ar[u]^{\iota_{P_1}^{P_2}} \ar[r]^{\phi_1} & Q_1 \ar[u]_{\iota_{Q_1}^{Q_2}}}
\]
in $\F$ among subgroups in $\Q$;
\item[(b)] the function $\tau\colon \Gamma^* \to
\Gamma^*$ defined by the rule
\[
g^\tau = t([c_g])g,
\]
is a bijection that restricts to the identity map on $S$, and
\[
(g_1g_2\cdots g_n)^{\tau} = g_1^\tau g_2^\tau \cdots g_n^\tau
\]
for each collection of elements $g_i \in \Gamma^*$ with the property that there
is $Q \in \Q$ such that $Q^{g_1\cdots g_i} \in \Q$ for all $1 \leq i \leq n$;
and
\item[(c)] $t = 0$ if and only if $\tau$ is the identity on $\Gamma^*$.  
\end{enumerate}
\end{lemma}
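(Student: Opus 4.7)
The plan is to build a normalizing $0$-cochain to prove the first assertion, and then extract parts (a)--(c) by repeated use of the cocycle identity \eqref{E:1cob}, together with the key fact that $\Z_\F$ carries an inclusion $\iota_P^Q$ with $P\leq Q$ to the genuine inclusion $Z(Q) \hookrightarrow Z(P)$.

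For the first statement, I would choose a canonical $0$-cochain $u$ by setting $u(S) = 1$ and $u(P) = t([\iota_P^S])$ for $P \in \Q$. Applying $dt = 1$ to the composable pair $P \xrightarrow{\iota_P^Q} Q \xrightarrow{\iota_Q^S} S$ and using that $(\iota_P^Q)^{-1}$ acts as the inclusion $Z(Q) \hookrightarrow Z(P)$ yields the relation $t([\iota_P^S]) = t([\iota_Q^S])\,t([\iota_P^Q])$ in $Z(P)$. Combining this with \eqref{E:0cob} shows $du([\iota_P^Q]) = u(Q)u(P)^{-1} = t([\iota_P^Q])^{-1}$, so $t\cdot du$ is inclusion-normalized.

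For (a), I would apply $dt = 1$ to the two composable sequences $P_1 \xrightarrow{\iota} P_2 \xrightarrow{\phi_2} Q_2$ and $P_1 \xrightarrow{\phi_1} Q_1 \xrightarrow{\iota} Q_2$. Since $t$ is inclusion-normalized, the first gives $t([\iota\cdot\phi_2]) = t([\phi_2])$ (viewed in $Z(P_1)$ via inclusion) and the second gives $t([\phi_1\cdot\iota]) = t([\phi_1])$. Commutativity of the square identifies the two composites as the same morphism in $\Mor_{\O(\F^c)}(P_1,Q_2)$, so the two values agree and $t([\phi_1]) = t([\phi_2])$ in $Z(P_1)$.

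For (b), I would first make $t([c_g])$ canonical by choosing a maximum $Q \in \Q$ with $Q^g \in \Q$ (existence is immediate; part (a) shows that any two such maxima, or any compatible overgroup pair, give the same value after identifying their centers). Then $t([c_g]) \in Z(Q) \leq S$, so $g^\tau = t([c_g])g$ still conjugates $Q$ to $Q^g$, and hence lies in $\Gamma^*$. For $g \in S$, one takes $Q = S$ so that $c_g \in \Inn(S)$ and $[c_g] = [\mathrm{id}_S]$; the cocycle condition applied to $[\mathrm{id}_S][\mathrm{id}_S]$ gives $t([\mathrm{id}_S]) = 1$, so $\tau$ fixes $S$ pointwise. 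For the multiplicative rule, I apply $dt = 1$ to $Q \xrightarrow{c_{g_1}} Q^{g_1} \xrightarrow{c_{g_2}} Q^{g_1g_2}$, which gives $t([c_{g_1g_2}]) = (g_1 t([c_{g_2}]) g_1^{-1})\cdot t([c_{g_1}])$ in $Z(Q)$; since $Z(Q)$ is abelian, rearranging the product $t([c_{g_1}])g_1\cdot t([c_{g_2}])g_2$ gives exactly $t([c_{g_1g_2}])g_1g_2$, and the general case follows by induction. Bijectivity comes from observing that the map $g \mapsto t([c_g])^{-1} g$ is a two-sided inverse: since $t([c_g]) \in S$, the automorphism $c_{t([c_g])g}$ differs from $c_g$ by an inner automorphism of the source, so $[c_{g^\tau}] = [c_g]$, whence the composite returns $g$. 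Part (c) is then immediate: if $t = 0$ then plainly $\tau = \mathrm{id}$, while conversely if $\tau = \mathrm{id}$ then $t([c_g]) = 1$ at the chosen maximal $Q$ for every $g \in \Gamma^*$, and part (a) propagates this vanishing to every representative $[c_g]\colon P \to P^g$ with $P \in \Q$; since every morphism in the full subcategory $\F_\Q$ is of this form, $t = 0$ as a cochain on $\O(\F^c)$ by the isomorphism of Lemma~\ref{L:olijm}(a).

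The main bookkeeping obstacle will be part (b): ensuring that the canonical choice of $Q$ is compatible across the different $g_i$ and $g_1\cdots g_n$ so that the cocycle identity can genuinely be applied, and tracking the conjugation action on cocycle values versus plain group multiplication. Part (a) is the tool that absorbs this subtlety, so the work is mostly a careful translation of the cocycle identity into the semidirect-product picture implicit in the formula $g^\tau = t([c_g])g$.
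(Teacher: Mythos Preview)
Your proposal is correct and follows essentially the same route as the paper: the same normalizing $0$-cochain $u(P)=t([\iota_P^S])$, the same two applications of the cocycle identity for (a), and the same computation $t([c_{g_1}])g_1\,t([c_{g_2}])g_2 = t([c_{g_1g_2}])g_1g_2$ for (b). The only cosmetic differences are that the paper takes the canonical domain ${}^gS\cap S$ for $c_g$ (which is your ``maximum $Q$''), obtains bijectivity by observing that $t^{-1}$ induces the inverse rigid map rather than checking $[c_{g^\tau}]=[c_g]$ directly, and simply declares (c) ``clear'' where you spell out the appeal to Lemma~\ref{L:olijm}(a).
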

\begin{proof}
Given a $1$-cocycle $t$ for $\Z^\Q_\F$, define a $0$-cochain $u$ by $u(P) =
t([\iota_P^S])$ for each $P \in \Q$. Then for any inclusion $\iota_P^Q$ in $\F$
with $P, Q \in \Q$, we see that
\[
du(P \xrightarrow{[\iota_P^Q)]} Q) = u(Q)u(P)^{-1} = t([\iota_Q^S])t([\iota_P^S])^{-1},
\]
so that
\[
(t\,du)(P \xrightarrow{[\iota_P^Q]} Q) =
t([\iota_P^Q])t([\iota_Q^S])t([\iota_P^S])^{-1} = t([\iota_P^S])t([\iota_P^S])^{-1} = 1
\]
by the $1$-cocycle identity. Hence, $t\,du$ is inclusion-normalized. 

Assume now that $t$ is inclusion-normalized, and let $P_i$, $Q_i$, and $\phi_i$
be as in (a). Since $t$ sends inclusions to the identity, the $1$-cocycle
identity yields
\begin{align*}
t([\phi_1\iota_{Q_1}^{Q_2}]) &= t([\iota_{Q_1}^{Q_2}])^{\phi_1^{-1}}t([\phi_1])
= t([\phi_1]), \text{ and}\\
t([\iota_{P_1}^{P_2}\phi_2]) &=
t([\phi_2])^{(\iota_{P_1}^{P_2})^{-1}}t([\iota_{P_1}^{P_2}]) = t([\phi_2])
\end{align*}
and so (a) follows by commutativity of the diagram. 

Let $\tau$ be given by (b). Since $g \in \Gamma^*$, the conjugation map
$c_g\colon { }^g\!S \cap S \to S \cap S^g$ is a map between subgroups in $\Q$.
Part (a) shows that $t([c_g])$ agrees with the value of $t$ on the class of
each restriction of $c_g$ provided that the source and target of such a
restriction lie in $\Q$.  This shows that $\tau$ is well defined.  Then $\tau$
is a bijection since its inverse is induced by $t^{-1}$ (which is
inclusion-normalized) in the same way.  Further, for $s \in S$, $[c_s] =
[\id_S]$ is the identity in the orbit category, and so $\tau$ is the identity
map on $S$, since $t$ is normalized.

Let $g_1, g_2 \in \Gamma^*$ and $Q \in \Q$ with $Q^{g_1} \leq S$ and
$Q^{g_1g_2} \leq S$. Then by the $1$-cocycle identity,
\[
g_1^\tau g_2^\tau = t([c_{g_1}])g_1\,t([c_{g_2}])g_2 =
t([c_{g_1}])t([c_{g_2}])^{g_1^{-1}}\,g_1g_2 = t([c_{g_1}c_{g_2}])\,g_1g_2 =
(g_1g_2)^\tau. 
\]
Now (b) follows by induction on $n$. Part (c) is clear.
\end{proof}

The function $\tau$ of Lemma~\ref{L:1cocyclesetup}(b) will be called the
\emph{rigid map} associated with the inclusion-normalized $1$-cocycle $t$.

\begin{lemma}\label{L:1cocyclesetup2}
Let $t$ be an inclusion-normalized $1$-cocycle for the functor $\Z_\F^\Q$ and
let $\tau$ be the rigid map associated with $t$. Then
\begin{enumerate}
\item[(a)] for each $Q \in \Q \cap \F^f$ with $C_\Gamma(Q) \leq Q$, there
is $z \in Z(N_S(Q))$ such that $\tau$ is conjugation by $z$ on $N_\Gamma(Q)$; and
\item[(b)] if $z \in Z(S)$ and $u$ is the constant $0$-cochain defined by $u(Q)
= z$ for each $Q \in \Q$, then $du$ is inclusion-normalized and the rigid map
$\upsilon$ associated with $du$ is conjugation by $z$ on $\Gamma^*$.
Conversely, each inclusion-normalized $1$-coboundary $t$ is of the form $t =
du$ for some such constant $0$-cochain $u$.
\end{enumerate}
\end{lemma}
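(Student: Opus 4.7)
My plan treats the three parts in turn. For part (a), fix $Q \in \Q \cap \F^f$ and define $\delta\colon N_\Gamma(Q) \to Z(Q)$ by $\delta(g) = t([c_g])$, where $c_g$ is viewed as an automorphism of $Q$. The $1$-cocycle identity for $t$ on composable elements of $\Aut_\F(Q)$ translates into a crossed homomorphism identity for $\delta$, with $Z(Q)$ an $N_\Gamma(Q)$-module by conjugation. For $s \in N_S(Q)$, the naturality square with horizontal arrows $c_s|_Q$ and $c_s|_S$ and vertical inclusions, together with Lemma~\ref{L:1cocyclesetup}(a), gives $t([c_s|_Q]) = t([c_s|_S])$; and $[c_s|_S] = [\id_S]$ in the orbit category because $c_s|_S \in \Inn(S)$, so $t([c_s|_Q]) = t([\id_S]) = 1$ by inclusion-normalization. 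Thus $\delta|_{N_S(Q)} = 1$. Because $Q$ is fully normalized, $N_S(Q)$ is a Sylow $p$-subgroup of $N_\Gamma(Q)$, and since $Z(Q)$ is a finite $p$-group, the restriction map $H^1(N_\Gamma(Q); Z(Q)) \to H^1(N_S(Q); Z(Q))$ is injective. Hence $[\delta] = 0$, so there is $z \in Z(Q)$ with $\delta(g) = gzg^{-1}z^{-1}$ for every $g \in N_\Gamma(Q)$. Taking $g = s \in N_S(Q)$ and using $\delta(s) = 1$ gives $[s,z] = 1$, so $z \in Z(N_S(Q))$. Finally, $g^\tau = \delta(g)g = gzg^{-1}z^{-1}g$; since $z$ and $gzg^{-1}$ both lie in the abelian group $Z(Q)$ they commute, and the expression rearranges to $z^{-1}gz$.

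For part (b), $\F$-centricity of each $Q \in \Q$ yields $z \in Z(S) \leq C_S(Q) \leq Q$, and $z$ commutes with $Q$, so $z \in Z(Q)$ and $u$ is a well-defined $0$-cochain. For an inclusion $\iota_P^Q$, the pullback of $z \in Z(Q)$ along $(\iota_P^Q)^{-1}$ is $z$ itself viewed in $Z(P)$, so $du([\iota_P^Q]) = zz^{-1} = 1$; hence $du$ is inclusion-normalized. The associated rigid map is $g^\upsilon = du([c_g])g = (gzg^{-1}z^{-1})g$, which by the same commutativity argument as in (a) simplifies to $z^{-1}gz$, i.e., conjugation by $z$.

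For part (c), I would invoke an Alperin-style factorization afforded by the conjugation family $\C$. Given $g \in \Gamma^*$ and $P \in \Q$ with $P^g \in \Q$, the morphism $c_g\colon P \to P^g$ decomposes as a composition of restrictions $c_{k_i}|_{P_{i-1}}$ with $k_i \in N_\Gamma(Q_i)$, $Q_i \in \C$, $P_0 = P \leq Q_1$, $P_i = P_{i-1}^{k_i} \leq Q_{i+1}$, and $P_n = P^g$. Each $P_i$ is $\F$-conjugate to $P \in \Q$, so $P_i \in \Q$ by $\F$-invariance; and because $\Q$ is an interval containing both $P_{i-1}$ and $S$, the inclusions $P_{i-1} \leq Q_i \leq S$ place $Q_i \in \Q$, so $Q_i \in \C \cap \Q$ and the hypothesis gives $k_i^\tau = k_i$. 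Writing $g = k_1\cdots k_n\cdot y$ with $y = (k_1\cdots k_n)^{-1}g \in C_\Gamma(P)$, and using $P$ as the witness subgroup whose conjugates $P_i$ all lie in $\Q$, Lemma~\ref{L:1cocyclesetup}(b) yields $g^\tau = k_1^\tau\cdots k_n^\tau\cdot y^\tau = k_1\cdots k_n\cdot y = g$, where $y^\tau = y$ follows from $c_y|_P = \id_P$ and inclusion-normalization. The main obstacle is the cohomological argument in (a): correctly setting up the $N_\Gamma(Q)$-module structure so that the vanishing of $\delta$ on the Sylow $p$-subgroup $N_S(Q)$ forces $[\delta] = 0$ in $H^1$, and then checking that the resulting $z$ actually lies in $Z(N_S(Q))$; a secondary subtlety in (c) is ensuring each $Q_i$ in the Alperin factorization lies in $\Q$ so that the multiplicativity clause of Lemma~\ref{L:1cocyclesetup}(b) applies at every stage.
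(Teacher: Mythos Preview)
Your proof is correct and follows essentially the same approach as the paper. Part (a) is exactly the paper's second proof via the injectivity of restriction $H^1(N_\Gamma(Q);Z(Q)) \to H^1(N_S(Q);Z(Q))$ on a $p$-group coefficient module; parts (b) and (c) are the same direct computations and Alperin-type factorization as in the paper, with your treatment of (c) arguably more careful in that you explicitly handle the centralizer factor $y=(k_1\cdots k_n)^{-1}g\in C_\Gamma(P)$ and verify $y^\tau=y$ via $c_y|_P=\id_P$, whereas the paper writes $g=g_1\cdots g_n$ directly.
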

\begin{proof}
We give two proofs for part (a). The first one uses elementary group-theoretic
arguments and the norm map, and is given in Lemma~\ref{L:gaschutz}. The second
is modeled on part of the proof of \cite[Lemma~4.2]{AOV2012} and given now.
From Lemma~\ref{L:1cocyclesetup}(b), $\tau$ induces an automorphism of
$N_\Gamma(Q)$ that is the identity on $N_S(Q)$.  By
\cite[Lemma~1.2]{OliverVentura2009} and its proof, there is a commutative
diagram 
\begin{eqnarray}\label{E:conjbyzseqs}
\vcenter{
\xymatrix{ 1 \ar[r] & Z^1(\Out_\Gamma(Q); Z(Q)) \ar[r]^-{\tilde{\eta}} \ar[d] &
\Aut(N_\Gamma(Q), Q) \ar[r] \ar[d] & \Aut(Q) \ar[d]\\ 1 \ar[r] &
H^1(\Out_\Gamma(Q); Z(Q)) \ar[r]^-{\eta} & \Out(N_\Gamma(Q), Q) \ar[r] &
\Out(Q)} 
}
\end{eqnarray} 
with exact rows, where $\Aut(N_\Gamma(Q), Q)$ is the subgroup of automorphisms
of $N_\Gamma(Q)$ that leave $Q$ invariant, and where $\Out(N_\Gamma(Q), Q)
= \Aut(N_\Gamma(Q),Q)/\Inn(N_\Gamma(Q))$.  Also, $\tilde{\eta}$ maps the
restriction of $t$ (to $\Out_\Gamma(Q)$) to the restriction of $\tau$ to
$N_\Gamma(Q)$. The restriction map $H^1(\Out_\F(Q); Z(Q)) \to
H^1(\Out_S(Q);Z(Q))$ is injective since $\Out_S(Q)$ is a Sylow
$p$-subgroup of $\Out_\F(Q)$ by assumption on $Q$.  Hence $t$ represents the
zero class in $H^1(\Out_\F(Q);Z(Q))$ since $t$ is zero on
$\Out_S(Q)$. It then follows from \eqref{E:conjbyzseqs} that $\tau$
induces an inner automorphism of $N_\Gamma(Q)$. Hence $\tau$ is conjugation by
an element in $Z(N_S(Q))$, since $C_\Gamma(Q) \leq Q$ and $\tau$ is the
identity on $N_S(Q)$.

With $u$ as in (b), we see that $du(P \xrightarrow{[\iota_P^Q]} Q) =
u(Q)u(P)^{-1} = zz^{-1} = 1$, for any inclusion among subgroups when $P \in \Q$
(and when $P \notin \Q$ by \eqref{E:0cob}).  Also, for $g \in \Gamma^*$, 
\[
g^\upsilon = du([c_g])g = z^{-1}z^{g^{-1}}g = g^z,
\]
which proves the first half of (b).

Given an inclusion-normalized $1$-coboundary $t = du$,  set $z = u(S) \in
Z(S)$.  Then for each $Q \in \Q$,
\[
1 = t([\iota_Q^S]) = u(S)u(Q)^{-1} = zu(Q)^{-1},
\]
and so $u$ is constant on $\Q$ with image $z$.
\end{proof}

The following lemma is helpful for showing that a 1-cocycle is trivial in
inductive contexts. It is used in the proofs of Theorem~\ref{T:ntlim} and
Proposition~\ref{P:3.3}. For more information on conjugacy functors,
well-placed subgroups, and conjugation families, please see
Appendix~\ref{A:groups}.  
\begin{lemma}
\label{L:conjfam-rigid}
Let $t$ be an inclusion normalized $1$-cocycle for the functor $\Z_\F^\Q$, and
let $\tau$ be the rigid map corresponding to $t$. Fix a $\Gamma$-conjugacy
functor $W$, and let $\C$ be the associated conjugation family consisting of
the subgroups of $S$ that are well-placed with respect to $W$. Set
\[
\W = \{Q \in \C \cap \Q \mid W(Q) = Q\}, 
\]
and assume that $W(Q) \in \Q$ and $W(W(Q)) = W(Q)$ whenever $Q \in \Q$.  If
$\tau$ is the identity on $N_\Gamma(Q)$ for each $Q \in \W$, then
$\tau$ is the identity on $\Gamma^*$.
\end{lemma}
\begin{proof}
Assume that $\tau$ is the identity on $N_{\Gamma}(Q)$ for each $Q \in \W$.
For each $Q \in \C \cap \Q$, $W(Q)$ is normal in $N_\Gamma(Q)$ by the
definition of a $\Gamma$-conjugacy functor (Definition~\ref{D:conjfunctor}(c)),
and so $N_\Gamma(Q) \leq N_\Gamma(W(Q))$. By the definition of a well-placed
subgroup, $Q \in \C$ implies $W(Q) \in \C$.  Since $W(Q) \in \Q$ and $W(W(Q)) =
W(Q)$ by assumption, we see that $W(Q) \in \W$.  Thus $\tau$ is the identity on
$N_\Gamma(Q)$ for each $Q \in \C \cap \Q$.

It now follows directly from Lemma~\ref{L:1cocyclesetup}(b) that $\tau$ is the
identity on $\Gamma^*$.  We give the details.  Suppose that $\tau$ is the
identity on $N_\Gamma(Q)$ for each $Q \in \C \cap \Q$.  Fix $g \in \Gamma^*$,
and choose $T \in \Q$ with $T^g \leq S$.  By Lemma~\ref{T:wellplaced}, there
are a positive integer $n$, subgroups $Q_1, \dots, Q_n \in \C$, and
elements $g_i \in N_\Gamma(Q_i)$ such that $g = g_1\cdots g_n$, $T \leq Q_1$,
and $T^{g_1\cdots g_i} \leq Q_{i+1}$ for each $i = 1, \dots, n-1$. As $\Q$ is
$\F$-invariant and closed under passing to overgroups, $Q_i \in \Q$ for each
$i$. Since $\tau$ fixes $g_i$ for each $i$ by assumption, $\tau$ fixes $g$ by
Lemma~\ref{L:1cocyclesetup}(b).
\end{proof}

\begin{theorem}\label{T:ntlim}
Let $(\Gamma, S, Y)$ be a reduced setup for the prime $2$. 
Set $D = Z(Y)$, 
$\F = \F_S(\Gamma)$, 
$G = \Gamma/C_\Gamma(D)$,
$\A = \A_D(G)^\circ \cup \hat{\A}_D(G)^{\circ}$,
$\T = \T_D(G)$,
$\B = \A - \T$, and 
\[
\R = \{P \in \sS(S)_{\geq Y} \mid J_\A(P) = Y\}.
\]
Assume $\B$ is not empty. Then $L^2(\F; \R) = 0$. 
\end{theorem}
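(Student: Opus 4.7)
The plan is to reduce the vanishing of $L^2(\F;\R)$ to that of $L^1(\F;\Q)$ for the complementary interval $\Q=\sS(S)_{\geq Y}\setminus\R$ via Lemma~\ref{L:oliexact}(a), and then to prove $L^1(\F;\Q)=0$ by analyzing an inclusion-normalized $1$-cocycle through its rigid map in the sense of Lemmas~\ref{L:1cocyclesetup} and \ref{L:1cocyclesetup2}, using the control of fixed points supplied by Theorem~\ref{T:solobs}.

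For the reduction, I would observe that $\Q=\{P\in\sS(S)_{\geq Y}\mid J_\A(P)>Y\}$ is an $\F$-invariant interval with $\Q\cup\R=\sS(S)_{\geq Y}$, and that by monotonicity of $J_\A$ in its argument no element of $\Q$ sits inside an element of $\R$; the hypotheses (i)--(iii) of Lemma~\ref{L:oliexact} hold, so part (a) of that lemma yields $L^2(\F;\R)\cong L^1(\F;\Q)$. Given an inclusion-normalized $1$-cocycle $t$ for $\Z^\Q_\F$ with associated rigid map $\tau\colon\Gamma^*\to\Gamma^*$ (Lemma~\ref{L:1cocyclesetup}), it suffices to show that $\tau$ becomes the identity after subtracting the coboundary of some $0$-cochain. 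Applying Lemma~\ref{L:1cocyclesetup2}(a) with $Q=S\in\Q\cap\F^f$ produces $z_0\in Z(S)$ with $\tau|_{N_\Gamma(S)}=c_{z_0}$; subtracting the coboundary of the constant $0$-cochain $u\equiv z_0^{-1}$, whose rigid map is $c_{z_0^{-1}}$ by Lemma~\ref{L:1cocyclesetup2}(b), reduces me to the case $\tau|_{N_\Gamma(S)}=\id$.

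With this normalization in place, I would apply Theorem~\ref{T:solobs} in $G=\Gamma/C_\Gamma(D)$: the hypothesis is met because $\B\neq\varnothing$ furnishes a minimal nontrivial offender on $D$ that is not solitary. The theorem produces a subgroup $\bar J\leq S/C_\Gamma(D)$, generated by offenders and weakly closed in $S/C_\Gamma(D)$ with respect to $G$, such that $C_D(N_G(\bar J))=1$. Its preimage $J$ in $S$ is then weakly closed in $S$ with respect to $\Gamma$, lies in $\Q\cap\F^f$ with $N_S(J)=S$, and by Lemma~\ref{L:normquot} satisfies $C_D(N_\Gamma(J))=1$. A second application of Lemma~\ref{L:1cocyclesetup2}(a) at $J$ furnishes $z\in Z(S)$ with $\tau|_{N_\Gamma(J)}=c_z$; since the general-setup conditions give $Z(S)\leq C_S(Y)=Z(Y)=D$, in fact $z\in D$.

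The main obstacle is to force $z=1$ and to propagate triviality of $\tau$ from the available normalizers to all of $\Gamma^*$. I expect to show $z\in C_D(N_\Gamma(J))$ by combining the triviality of $\tau$ on $N_\Gamma(S)\leq N_\Gamma(J)$ (which implies $z$ commutes with $N_\Gamma(S)$) with the compatibility of rigid maps on overlaps $N_\Gamma(Q_1)\cap N_\Gamma(Q_2)$ of normalizers of fully normalized objects in $\Q$, exploiting a Frattini-style decomposition of $N_\Gamma(J)$ modulo $C_\Gamma(D)$ together with the $\Gamma$-invariance of $D$. The control of fixed points $C_D(N_\Gamma(J))=1$ then forces $z=1$, so $\tau|_{N_\Gamma(J)}=\id$. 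Because every overgroup of $Y$ in $S$ is automatically $\F$-centric in the reduced setup, Alperin's fusion theorem restricted to $\Q$ supplies a conjugation family $\C\subseteq\Q\cap\F^f$; iterating the preceding analysis over $\C$ and invoking Lemma~\ref{L:1cocyclesetup2}(c) then gives $\tau=\id$ on $\Gamma^*$, so that $t$ is cohomologous to zero and $L^2(\F;\R)\cong L^1(\F;\Q)=0$.
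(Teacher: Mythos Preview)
Your reduction to $L^1(\F;\Q)=0$ via Lemma~\ref{L:oliexact}(a) and your use of the rigid map $\tau$ attached to an inclusion-normalized cocycle are correct and match the paper. The substantive gap is in the phrase ``iterating the preceding analysis over $\C$'': this iteration is essentially the entire proof, and a single global application of Theorem~\ref{T:solobs} does not supply it. Theorem~\ref{T:solobs} yields $C_D(N_G(\bar J))=C_D(G)$ (not $=1$, by the way) for one weakly closed $J\leq S$. But to invoke Lemma~\ref{L:1cocyclesetup2}(c) you must show $\tau|_{N_\Gamma(Q)}=\id$ for every $Q$ in a conjugation family lying in $\Q$, and for that you need a \emph{relative} control-of-fixed-points statement inside $H=N_\Gamma(Q)$: namely $C_D(H)=C_D(N_H(J'))$ for some $J'$ whose $\Gamma$-normalizer has already been handled. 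When $N_S(Q)<S$ your global $J$ need not lie in $N_S(Q)$, and even when it does, the offenders present in $\bar H$ need not agree with those in $G$; Theorem~\ref{T:solobs} applied once to $G$ says nothing about fixed points inside $H$. Your sketch of ``compatibility of rigid maps on overlaps'' and a ``Frattini-style decomposition'' does not produce such a relative statement: from $\tau|_{N_\Gamma(S)}=\id$ and $\tau|_{N_\Gamma(J)}=c_z$ you only get $z\in C_D(N_\Gamma(S))$, which is the wrong direction of inclusion.

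The paper organizes the needed induction via a $\Gamma$-conjugacy functor $W$ (two different choices, according to whether some member of $\A$ has order $\geq 4$) and the associated conjugation family of well-placed subgroups. One first normalizes so that $\tau$ is the identity on $N_\Gamma(W(S))$ (not on $N_\Gamma(S)$). Then for each well-placed $Q=W(Q)\in\Q$, setting $S^*=N_S(Q)$ and $H=N_\Gamma(Q)$, one applies the norm argument \emph{inside} $\bar H$ (via Lemma~\ref{L:controlbig} or Lemma~\ref{L:nonS3basednorm} feeding into Theorem~\ref{T:glawc2}) to obtain $C_D(H)=C_D(N_H(W(S^*)))$; since $N_S(W(S^*))>S^*$ by Lemma~\ref{L:conjfunctor}(c), induction on $|S:S^*|$ gives $\tau|_{N_\Gamma(W(S^*))}=\id$, whence $z_H\in C_D(N_H(W(S^*)))=C_D(H)$ and $\tau|_H=\id$. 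A genuinely delicate subcase arises when every offender in $\bar{S^*}$ is semisolitary while $\A$ still contains an element of order $\geq 4$ in $\bar S$; here the paper derives a contradiction using Lemma~\ref{L:SSpermlocal}. None of this inductive machinery or case analysis is present in your proposal, and it cannot be replaced by a single invocation of Theorem~\ref{T:solobs}.
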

\begin{proof}
If $\R = \sS(S)_{\geq Y}$, then $L^2(\F;\R) = 0$ by Lemma~\ref{L:olijm}(b) so
we may assume $\Q := \sS(S)_{\geq Y} - \R$ is not empty.  That is, $\A$ is not
empty.  Since $\Q$ is closed under passing to overgroups, $S \in \Q$ and
$J_\A(Q) \in \Q$ for each $Q \in \Q$.   

We will show $L^1(\F;\Q)=0$.  Since $\Q$ and $\R$ are $\F$-invariant intervals
that together satisfy the hypotheses of Lemma~\ref{L:oliexact}, the result then
follows from part (a) of that lemma. 
Fix a $1$-cocycle $t$ for the functor $\Z_\F^{\Q}$. To show that $t$ is
cohomologous to $0$, we may assume by Lemma~\ref{L:1cocyclesetup} that $t$ is
inclusion-normalized.
Let $\tau\colon \Gamma^* \to \Gamma^*$ be the rigid map associated with $t$.

The proof splits into two cases. In Case 1, some member of $\A$ has order at
least $4$. In Case 2, every member of $\A$ has order $2$. We now fix notation
for each case. Use bars to denote images modulo $C_\Gamma(D)$.  If $P$ is a
Sylow $2$-subgroup of a subgroup $H \leq \Gamma$ such that every member of $\A
\cap \bar{P}$ has order $2$, then define $\B(P,H)$ to be the collection of
subgroups in $\A \cap \bar{P}$ that are not solitary in $\bar{H}$ relative to
$\bar{P}$.  In any situation, let $\B(P)$ denote the set of subgroups in $\A
\cap \bar{P}$ that are not semisolitary relative to $\bar{P}$, and set
$\A_{\geq 4} = \{A \in \A \mid |A| \geq 4\}$. 
Define
\begin{align*}
J_1(P) &= J_{\A_{\geq 4}}(P); \text{ and }\\
J_2(P) &= J_{\A_{\geq 4} \cup \B(P)}(P),\\
\end{align*}
so that $Y \leq J_1(P) \leq J_2(P) \leq J_\A(P)$ whenever $P \geq Y$. 
We define two subgroup mappings $W_1$ and $W_2$ on $\sS(S)$, to be employed in
the respective cases.  In all cases, set $W_i(P) = P$ if $P$ does not contain
$Y$. For $P \geq Y$, set
\[
W_1(P) = 
\begin{cases}
J_1(P) &\text{ if $\A_{\geq 4} \cap \bar{P} \neq \varnothing$};\\
J_2(P) &\text{ if $\A_{\geq 4} \cap \bar{P} = \varnothing$ and $\B(P) \neq \varnothing$; and} \\
J_\A(P) &\text{ otherwise,}
\end{cases}
\]
and
\[
W_2(P) = 
\begin{cases} 
J_{\B(S,\Gamma)}(S) &\text{ whenever $J_{\B(S,\Gamma)}(S) \leq P$; and }\\
J_\A(P) &\text{ otherwise.}
\end{cases}
\]
In any case, 
\begin{eqnarray}
\label{E:Wspecial}
\text{$W_i(P) \in \Q$ and $W_i(W_i(P)) = W_i(P)$ whenever $P \in \Q$.}
\end{eqnarray}

Set $W = W_i$ for $i = 1$ or $2$. Then
\begin{eqnarray}
\label{E:Wiscf}
\text{$W$ is a $\Gamma$-conjugacy functor}.
\end{eqnarray}
Indeed, $W(P) \neq 1$ whenever $P \neq 1$ and $W(P) \leq P$ by construction in
each case. In case $W = W_1$, Definition~\ref{D:conjfunctor}(c) holds because
the collections used to define $W$ are $G$-equivariant (e.g., $\B(P^g) =
\B(P)^g$). In case $W = W_2$ and $\A = \A_D(G)_2$,
Definition~\ref{D:conjfunctor}(c) holds since 
\begin{eqnarray}
\label{E:JBSGwc}
\text{$J_{\B(S,\Gamma)}(S)$ is weakly closed in $S$ with respect to $\Gamma$}
\end{eqnarray}
by Lemma~\ref{L:TcapS}. 

Let $\C$ be the collection of subgroups of $S$ that are well-placed with
respect to $W$, and set
\[
\W = \{Q \in \C \cap \Q \mid W(Q) = Q\}.
\]
By \eqref{E:Wspecial}, we are in the situation of Lemma~\ref{L:conjfam-rigid}.
Thus, our strategy is to show that $\tau$ restricts to the identity on
$N_\Gamma(Q)$ for each $Q \in \W$. 

We first arrange that $\tau$ is the identity on $N_\Gamma(W(S))$. Since $W(S)$
is normal in $S$ and contains its centralizer in $\Gamma$, the restriction of
$\tau$ to $N_\Gamma(W(S))$ is conjugation by an element $z \in Z(S)$ by
Lemma~\ref{L:1cocyclesetup2}(a).  Upon replacing $t$ by $t\,du$ where $u$ is
the constant $0$-cochain defined by $u(Q) = z^{-1}$ for each $Q \in \Q$, and
upon replacing $\tau$ by the rigid map associated with $t\,du$, we may assume
by Lemma~\ref{L:1cocyclesetup2}(b) that
\begin{eqnarray}\label{E:tau=1onNJ}
\text{$\tau$ is the identity on $N_\Gamma(W(S))$.} 
\end{eqnarray}

We claim that, to complete the proof of the theorem, it suffices to show that 
\begin{eqnarray}
\label{E:localcontrol}
\parbox[c]{0.80\linewidth}{
\center{
$C_D(H) = C_D(N_H(W(S^*)))$ for all $Q \in \W$, \\
with $H = N_\Gamma(Q)$ and $S^*
= N_S(Q)$.}} 
\end{eqnarray}
To see this, assume \eqref{E:localcontrol}. We show that $\tau$ is the identity
on $N_\Gamma(Q)$ for each $Q \in \W$ by induction on the index of $S^*$ in $S$.
Once this is done, Lemma~\ref{L:conjfam-rigid} shows that $\tau$ is the
identity on $\Gamma^*$, and then $t = 0$ according to
Lemma~\ref{L:1cocyclesetup}(c). 

As before, Lemma~\ref{L:1cocyclesetup2}(a) shows that $\tau$ acts on $H$ as
conjugation by an element $z_H \in Z(S^*) \leq Z(Y) = D$. Assume first that
$S^* = S$. Then $\tau$ is the identity on $N_H(W(S^*))$ by \eqref{E:tau=1onNJ}. 
Hence 
\[
z_H \in C_D(N_H(W(S^*))) = C_D(H)
\]
by \eqref{E:localcontrol}, so that $\tau$ is the identity on $H$ in this case.

Assume now that $S^* < S$. Then $S^* < N_S(W(S^*))$ by
Lemma~\ref{L:conjfunctor}(c), and we see that $\tau$ is the identity on
$N_H(W(S^*))$ by induction. We use this in place of \eqref{E:tau=1onNJ} to
repeat the argument of the last paragraph. Thus, in all cases,
\eqref{E:localcontrol} yields that $\tau$ is the identity on $H$, as desired. 

We are thus reduced to proving \eqref{E:localcontrol}. In each of Case 1 and
Case 2, control of fixed points is shown via the norm argument of
Theorem~\ref{T:glawc2}. In order to check that the hypotheses of
Theorem~\ref{T:glawc2} apply, Lemma~\ref{L:controlbig} or
Lemma~\ref{L:nonS3basednorm} is used depending on which types of offenders are
involved in $S^*$. To help with translation of notation here and in those
results, see Table~\ref{tab:translation}.
\renewcommand{\arraystretch}{2}
\begin{table}
\centering
\begin{tabular}{l|cccc}
\ref{T:glawc2}, \ref{L:controlbig}, \ref{L:nonS3basednorm} & $G$ & $S$ & $J = J_\A(S) = J^*$ & $H$\\
\hline
this proof & $\bar{H} = \overline{N_\Gamma(Q)}$ & $\bar{S}^* =
\overline{N_S(Q)}$ & $\overline{W(S^*)}$ & $\overline{N_H(W(S^*))}$
\end{tabular}
\caption{Translation of notation from \ref{T:glawc2}, \ref{L:controlbig}, and \ref{L:nonS3basednorm}.}
\label{tab:translation}
\end{table}
Theorem~\ref{T:glawc2} is a statement about control of fixed points of $G =
\Gamma/C_\Gamma(D)$ on $D$. As $(\Gamma, S, Y)$ is a reduced setup, each member of
$\Q$ contains $Y = C_S(D)$, so Lemma~\ref{L:normquot} provides the transition
from control of fixed points by normalizers within $G$ and those within
$\Gamma$.  We apply Lemma~\ref{L:normquot} implicitly for this transition in
the arguments that follow.  

Lastly, in order to apply the results of \S\ref{S:norm2}, we need to establish
that 
\begin{eqnarray}
\label{E:W(S*)wc}
\text{$W(S^*)$ is weakly closed in $S^*$ with respect to $H$,}
\end{eqnarray}
and this is done now.  In Case 2, \eqref{E:W(S*)wc} holds by \eqref{E:JBSGwc}.
In Case 1, unless 
\begin{eqnarray}
\label{E:except}
\A_{\geq 4} \cup \B(S^*) = \B(S^*) \neq \varnothing,
\end{eqnarray}
\eqref{E:W(S*)wc} holds since each of the collections used in defining $W_1$
are invariant under $G$-conjugation. In the exceptional situation
\eqref{E:except}, one has that every member of $\A \cap S^*$ is of order $2$
and that $W_1(S^*) = J_{\B(S^*)}(S^*) \leq J_\A(S^*)$.  Now $J_\A(S^*)$ is
abelian by Lemma~\ref{L:nooveroffenders}(c) and assumption, and it is weakly
closed in $S^*$ with respect to $H$. An $H$-conjugate of $J_{\B(S^*)}(S^*)$ in
$S^*$ lies in $J_\A(S^*)$, so the conjugation is induced in $N_H(J_{\A}(S^*))$
by Lemma~\ref{L:burnside}. It follows that this conjugate is generated by
offenders that are semisolitary with respect to $S^*$. This shows $W_1(S^*) =
J_{\B(S^*)}(S^*)$ is weakly closed in $S^*$ with respect to $H$, as claimed,
and we conclude that \eqref{E:W(S*)wc} holds in all cases.

We now prove in each of the two cases that $N_H(W(S^*))$ controls the fixed
points of $H$ on $D$; that is, \eqref{E:localcontrol} holds.

\medskip
\noindent
\textbf{Case 1:} Some member of $\A$ has order at least $4$. 

\smallskip
Put $W = W_1$.  Assume first that $S^* = S$. Since Case 1 holds, the collection
$\A_{\geq 4} = \A_D(G)^\circ_{\geq 4} \cup \hat{\A}_D(G)^{\circ}$ is not empty.
Hence $W(S^*) = J_1(S)$ by definition of $W$.  By Lemma~\ref{L:controlbig}, the
hypotheses of Theorem~\ref{T:glawc2} apply, and thus $N_H(W(S^*))$ controls
the fixed points of $H$ on $D$. 

Assume now that $S^* < S$.  If $\A_{\geq 4} \cap \bar{S}^*$ is not empty, then
$W(S^*) = J_1(S^*)$, and $N_H(W(S^*))$ controls the fixed points of $H$ on $D$
via Lemma~\ref{L:controlbig} as before. 

Assume that $\A_{\geq 4} \cap \bar{S^*}$ is empty but that $\B(S^*)$ is not
empty. Then every member of $\A \cap \bar{S^{*}}$ is of order $2$, and some
member of $\A \cap \bar{S^*}$ is not semisolitary relative to $\bar{S^*}$. Such
an offender is not solitary in $\bar{H}$ with respect to $\bar{S^*}$ by
Remark~\ref{R:S3basedimpliesS3similar}. Therefore, Lemma~\ref{L:nonS3basednorm}
applies, which yields by Theorem~\ref{T:glawc2} that $N_H(W(S^*))$ controls the
fixed points of $H$ on $D$. 



Finally, assume that $\A_{\geq 4} \cap \bar{S^*}$ and $\B(S^*)$ are both empty. We
will show this leads to a contradiction -- this is a critical step in the
proof.  By definition of $W_1$ and our choice of $Q \in \W$, we have 
\begin{eqnarray}\label{E:something}
Q = W(Q) = J_\A(Q) \leq J_\A(S^*) = W(S^*).
\end{eqnarray}
As every member of $\A \cap \bar{S^*}$ is of order $2$, 
\begin{eqnarray}
\text{$\ol{J_\A(S^*)}$ is elementary abelian}  
\end{eqnarray}
by Lemma~\ref{L:nooveroffenders}(c). 

As Case 1 holds and each member of $\A \cap \bar{S^*}$ is semisolitary relative
to $\bar{S^*}$, we have $J_{\A}(S^*) < J_\A(S)$. Since  $J_\A(S^*) \leq
J_{\A}(N_S(J_\A(S^*)))$, this inclusion therefore must be strict by
Lemma~\ref{L:conjfunctor}(b). Choose $A \leq S$ with $\bar{A} \in \A \cap
\bar{S}$ such that 
\begin{eqnarray}\label{E:thisthing} 
A \leq J_\A(N_S(J_\A(S^*))), \text{ but } A \nleq J_\A(S^*).
\end{eqnarray}
It follows from the definitions that each subgroup semisolitary relative to
$\bar{S^*}$ is also semisolitary relative to $\overline{J_\A(S^*)}$.  As $A$
normalizes $J_\A(S^*)$
we are thus in the situation of Lemma~\ref{L:SSpermlocal} with
$\ol{J_\A(S^*)}$ in the role of $P$, and $\A \cap \bar{S^*}$ in the role
of $\T$ there.  By that lemma, $\bar{A}$ normalizes every member of $\A \cap
\bar{S^*}$. Thus $A$ normalizes each of their preimages in $S$.  However $Q$ is
generated by the preimages of a subset of $\A \cap \bar{S^*}$ by
\eqref{E:something}, and hence $A$ normalizes $Q$. But then $A \leq J_\A(S^*)$,
contrary to \eqref{E:thisthing}. This contradiction completes the proof of Case
1. 

\medskip
\noindent
\textbf{Case 2:} Each member of $\A$ is of order $2$. 

\smallskip
Put $W = W_2$ and assume first that $S^* = S$. We verify the hypotheses of
Lemma~\ref{L:nonS3basednorm}. By assumption, $\A = \A_D(G)_2$, and so
$\hat{\A}_D(G)$ is empty by Remark~\ref{R:Ahat4}. By definition of $W_2$, we
have that $W(S) = J_{\B(S,\Gamma)}(S)$ in the role of $J^*$ of
Lemma~\ref{L:nonS3basednorm}. It follows from Definition~\ref{D:S3based} that
every subgroup solitary in $\bar{H}$ relative to $\bar{S}$ is also solitary in
$G$ relative to $\bar{S}$. Therefore $\B(S,\Gamma) \subseteq \B(S,H)$, where
the former is not empty by assumption, and where the latter is in the role of
$\B$ of Lemma~\ref{L:nonS3basednorm}.  Hence we may apply
Lemma~\ref{L:nonS3basednorm} to obtain the hypotheses of Theorem~\ref{T:glawc2}
which yields that $N_H(W(S))$ controls the fixed points of $H$ on $D$.



Finally, assume that $S^* < S$.  Since Case 2 holds,  $\ol{J_\A(S)}$ is
elementary abelian by Lemma~\ref{L:nooveroffenders}(c). Therefore as $Q
\in \W$, 
\[
Q = W(Q) \leq J_\A(Q) \leq J_\A(S),
\]
and $\ol{J_\A(S)}$ centralizes $\bar{Q}$. Hence $J_\A(S) \leq S^*$ so that
in particular,
\[
S^* \geq J_{\B(S,\Gamma)}(S).
\]
This shows that $W(S^*) = J_{\B(S,\Gamma)}(S)$ by definition of $W_2$.  As in
the situation where $S^* = S$, we may apply Lemma~\ref{L:nonS3basednorm} to
obtain the hypotheses of Theorem~\ref{T:glawc2}, which yields that
$N_H(W(S^*))$ controls the fixed points of $H$ on $D$ as before. This concludes the
proof in Case 2. Therefore, \eqref{E:localcontrol} holds in all cases, which
completes the proof of the theorem.
\end{proof}

\section{Transvections}\label{S:transvections}

The aim of this section is to give a proof, in Proposition~\ref{P:3.3},
of Proposition~3.3 of \cite{Oliver2013} for $p=2$. This result and the proof of
\cite[Theorem~3.4]{Oliver2013} give Theorem~\ref{T:main} when $p=2$. 

Using McLaughlin's classification of irreducible subgroups of $SL_n(2)$
generated by transvections, we first classify in Theorem~\ref{T:genbyS3based}
those finite groups which have no nontrivial normal $2$-subgroups and are
generated by solitary offenders.  Recall that by a \emph{natural} $S_m$-module
($m \geq 3$), we mean the nontrivial composition factor of the standard
permutation for $S_m$ over the field with two elements.

\begin{lemma}\label{L:soltrans}
Let $G$ be a finite group acting irreducibly on an elementary abelian $2$-group
$W$. Assume that $G$ is generated by transvections. Then $\T_W(G)$ is not empty
if and only if $G$ is isomorphic to a symmetric group of odd degree and $W$ is
a natural module for $G$. Moreover, in this case, $\T_W(G)$ is the set of
transpositions.
\end{lemma}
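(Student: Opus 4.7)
The plan is to handle each direction by different means: the ``if'' direction and the ``moreover'' clause by a concrete verification in $S_m$ with $m$ odd, and the ``only if'' direction by invoking McLaughlin's theorem followed by elimination of every remaining possibility.

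For the ``if'' direction, let $G = S_m$ with $m \geq 3$ odd acting on the natural module $W$ (the sum-zero subspace of the permutation module $\mathbb{F}_2^m$). Fix a transposition $T = \gen{(i,j)}$ and a Sylow $2$-subgroup $S$ of $G$ containing $T$. Since $m$ is odd, $S$ has at least one fixed point on $\{1,\dots,m\}$, and as $T$ moves $i$ and $j$ we may choose such a fixed point $k \neq i, j$. Set $L = \gen{(i,j),\,(j,k)} \cong S_3$; then (S1) is immediate. A direct computation in the permutation module gives $[W,L] = \gen{e_i + e_j,\, e_j + e_k}$ of dimension $2$, $C_W(L) = \{w \in W : w_i = w_j = w_k\}$ of dimension $m-3$, and $W = [W,L] \oplus C_W(L)$. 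Using the iterated wreath-product description of a Sylow $2$-subgroup of $S_{m-1}$ (identified with the stabilizer of $k$ inside $S$), the transpositions in $\A \cap S$ distinct from $T$ are supported on disjoint pairs in $\{1,\dots,m\} \setminus \{i,j,k\}$; they commute with each other and with $L$, centralize $[W,L]$, and generate $C_J(L)$ as an elementary abelian complement to $T$ in $J$. This verifies (S2) and (S3), so $T \in \T_W(G)$.

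For the ``only if'' direction, fix $T \in \T_W(G)$ solitary relative to a Sylow $S$. Since $T$ has order $2$ and is a best offender, $|W/C_W(T)| = 2$, so $T$ is generated by a transvection. As $G$ is generated by transvections and acts faithfully and irreducibly on the $\mathbb{F}_2$-space $W$, McLaughlin's theorem \cite{McLaughlin1969} identifies $G$ with one of $SL_n(\mathbb{F}_2)$, $Sp_{2n}(\mathbb{F}_2)$, $O_{2n}^\pm(\mathbb{F}_2)$, or a symmetric group $S_m$ acting on its natural module. A key consequence of (S3) is that $[W,L]$ is $2$-dimensional: $L \cong S_3$ acts faithfully with no fixed vector on $[W,L]$, hence $[W,L]$ is a direct sum of, say, $d$ copies of the natural $GL_2(\mathbb{F}_2)$-module; the transvection $T \leq L$ then acts as $d$ simultaneous transvections on $[W,L]$, forcing $d = 1$. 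Consequently (S2) requires every transvection of $\A \cap S$ distinct from $T$ to fix $[W,L]$ pointwise.

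The case $G = S_m$ with $m$ even is immediate: a Sylow $2$-subgroup is transitive on $\{1,\dots,m\}$, and any $L \cong S_3$ containing $T = \gen{(i,j)}$ must be $S_{\{i,j,k\}}$ for some $k$, so the transitivity of $S$ yields a transposition in $\A \cap S$ moving $k$, contradicting (S2). The main obstacle is ruling out the classical-group cases $SL_n(\mathbb{F}_2)$ with $n \geq 3$, $Sp_{2n}(\mathbb{F}_2)$ with $n \geq 2$, and $O_{2n}^\pm(\mathbb{F}_2)$; I would fix a standard Sylow (upper-unitriangular in $SL_n$, a Borel in the symplectic or orthogonal case) and use the $2$-dimensionality of $[W,L]$ to exhibit a transvection $T' \in S$ with $T' \neq T$ that fails to fix $[W,L]$ pointwise. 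For instance in $SL_n(\mathbb{F}_2)$ with $T = I + E_{12}$ and $[W,L] = \gen{e_1,u}$ where $u \equiv e_2 \pmod{\gen{e_1}}$, the family of transvections $T \cdot (I + e_1 f^T)$ with $0 \neq f \in \gen{e_3^*,\dots,e_n^*}$ lies in the upper-unitriangular Sylow, and a short linear-algebra argument shows that no single $u$ can satisfy $f(u) = 1$ for every nonzero $f$ in that space once $n \geq 3$. The symplectic and orthogonal cases proceed analogously using root transvections in a Borel subgroup, and the small-rank coincidences $SL_2(\mathbb{F}_2) = Sp_2(\mathbb{F}_2) = S_3$ reduce to the natural $S_3$-module (the $m = 3$ instance). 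The ``moreover'' clause is completed by noting that an involution in $S_m$ with cycle type $2^k 1^{m-2k}$ acts on the natural module with a $k$-dimensional commutator space, hence is a transvection if and only if $k = 1$, i.e., if and only if it is a transposition.
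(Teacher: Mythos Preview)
Your overall strategy matches the paper's: invoke McLaughlin, then eliminate each non-odd-symmetric case by producing a transvection in $S$ that is forced by (S2)--(S3) to centralize $[W,L]$ but does not. The paper frames the same obstruction slightly differently (it exhibits $T'\in\A\cap S$ not centralized by $L$), but the content is the same.

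Two points in your execution need repair. First, your $SL_n(2)$ argument only works for $n\geq 4$, not $n\geq 3$ as claimed: with $n=3$ the space $\gen{e_3^*}$ is one-dimensional, and if $u=e_2+e_3$ then your single $T'=I+E_{12}+E_{13}$ does centralize $\gen{e_1,u}$. The paper handles $SL_n(2)$ more cleanly by noting that $\A_W(G)_2$ is a single conjugacy class, so if any transvection were solitary then all would be; but $J=\gen{\A\cap S}=S$ is nonabelian for $n\geq 3$, contradicting the direct-product decomposition in (S2). Second, your assertion that a Sylow $2$-subgroup of $S_m$ is transitive when $m$ is even is false (e.g.\ $m=6$); what is true, and what you actually need, is that such a Sylow has no fixed point, so every $k$ is moved by some transposition in $S$. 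The paper's treatment of even-degree $S_m$, $Sp(W)$, and $O^{\pm}(W)$ proceeds by explicit form-theoretic calculations rather than the ``analogously'' you offer, and you should expect to supply comparable detail there.
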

\begin{proof}
Assume first that $G$ is generated by transvections on the irreducible module
$W$. By a result of McLaughlin \cite{McLaughlin1969}, $G$ is isomorphic to
$SL(W)$, or the dimension $n$ of $W$ is even and at least $4$ and $G$ is
isomorphic to $Sp(W)$, $O^{-}(W)$, $O^+(W)$, $S_{n+1}$, or $S_{n+2}$. For the
classical groups, $\A_W(G)_2$ is the set of transvections; for the symmetric
groups, $\A_W(G)_2$ is the set of transpositions. In all cases, $\A_W(G)_2$ is
a single $G$-conjugacy class.

Fix a Sylow $2$-subgroup $S$ of $G$, and assume first that $G = SL(W)$ with $n
\geq 3$. Since $\A_W(G)_2$ is a single conjugacy class, either $\T_W(G)$ is
empty or $\T_W(G) = \A_W(G)_2$.  Since $S$ is itself generated by
transvections, (S2) forces $S$ to be abelian in the latter case, a
contradiction.

Assume that $G$ is a symmetric group of degree $n+2 \geq 6$. We may assume $S$
stabilizes the partition $\{\{1,2\},\dots, \{n+1, n+2\}\}$, and then
$\gen{\A_W(G)_2 \cap S}$ is the centralizer of this partition. Fix $A =
\gen{(2j-1,2j)} \in \A_W(G)_2 \cap S$, and let $L \cong S_3$ be a subgroup of
$G$ containing $A$.  Then all members of $\A_W(G)_2 \cap L$ are conjugate, and
so the support of $L$ is a three-element set, say $\{2j-1, 2j, k\}$. Hence $L$
does not centralize the element of $\A_W(G)_2 \cap S$ moving $k$, and thus $A$
is not solitary in $G$ relative to $S$. 

Assume that $G = Sp(W)$ preserves the symplectic form $\mathfrak{b}$. Fix a
maximal isotropic subspace $W_0$ stabilized by $S$, and let $U$ be the
unipotent radical of its stabilizer in $G$. Then all members of $\A_W(G)_2 \cap
S$ are contained in $U$.  Let $A$ be one of them, having center $\gen{e}
\subseteq W_0$, and let $L \cong S_3$ be a subgroup of $G$ containing $A$.
Since two symplectic transvections commute if and only if their centers are
orthogonal with respect to $\mathfrak{b}$, $[W,L]$ is a hyperbolic line
containing $e$. Since $n \geq 4$, we may find $e' \in [W,L]^\perp \cap
W_0$, and then $L$ does not centralize the member of $\A_W(G)\cap S$ with
center $\gen{e+e'}$. So $A$ is not solitary.

Assume that $G$ is an orthogonal group preserving the quadratic form
$\mathfrak{q}$ with associated symplectic form $\mathfrak{b}$. If $n=4$, then
since $G$ is generated by transvections, $G = O_4^-(2) \cong S_5$.  Thus, we
may assume that $n \geq 6$. Choose a maximal isotropic subspace $W_0$ (with
respect to $\mathfrak{b}$) stabilized by $S$ and such that $W_0$ contains a
nonsingular vector. Let $U$ be the unipotent radical of the stabilizer of the
radical of $\mathfrak{q}|_{W_0}$. Fix a nonsingular vector $e \in W_0$, let $A
\leq U$ be generated by the transvection with center $\gen{e}$, and let $L
\cong S_3$ be a subgroup of $G$ such that $L$ contains $A$.  As before, the
restriction of $\mathfrak{b}$ to $[W,L]$ is nondegenerate. Since $\dim(W_0)
\geq 3$, there is a singular vector $e'$ in $[W,L]^\perp \cap W_0$, and then
$L$ does not centralize the orthogonal transvection with center $\gen{e+e'}$.
So $A$ is not solitary.

Therefore, $G$ is a symmetric group of odd degree and $W$ is a natural module
for $G$. 

For the converse, let $G = S_{2n+1}$ ($n\geq 1)$, $\Omega$ the standard
$G$-set, and identify $W$ with the set of even order subsets of $\Omega$.  To
show that each transposition generates a solitary subgroup, we may restrict our
attention to $T = \gen{(2n-1,2n)}$. Consider the partition $\{\{2i-1,2i\} \mid
1\leq i \leq n\}$ of $\Omega-\{2n+1\}$, and let $S$ be a Sylow $2$-subgroup of
$G$ stabilizing this partition. Then $\A_W(G)_2 \cap S =
\{\gen{(1,2)},\dots,\gen{(2n-1,2n)}\}$. Hence, taking $L$ be the symmetric
group induced on $\{2n-1,2n,2n+1\}$, one easily checks that (S1)-(S3) hold in
Definition~\ref{D:S3based}, so that $T$ is solitary in $G$ relative to $S$.
\end{proof}

\begin{theorem}\label{T:genbyS3based}
Let $G$ be a finite group, let $D$ be an abelian $2$-group on which $G$ acts
faithfully, and set $\T = \T_D(G)$. Assume that $O_2(G) = 1$ and that $G =
\gen{\T}$ is generated by its solitary offenders.  Then there exist a positive
integer $r$ and subgroups $E_1$, \dots, $E_r$ such that
\begin{enumerate}
\item[(a)] $G = E_1 \times \cdots \times E_r$, $\T = (\T \cap E_1) \cup \cdots
\cup (\T \cap E_r)$, and $E_i \cong S_{m_i}$ with $m_i$ odd for each $i$; and
\item[(b)] $D = V_1 \times \cdots \times V_r \times C_D(G)$ with $V_i =
[D,E_i]$ a natural $S_{m_i}$-module, and with
$[V_i, E_j] = 1$ for $j \neq i$.
\end{enumerate}
\end{theorem}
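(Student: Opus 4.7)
The plan is to descend to the elementary abelian socle $V := \Omega_1(D)$, identify $G$ as a transvection subgroup of $\GL(V)$ with generating transvections from $\T$, and apply Lemma~\ref{L:soltrans} on irreducible composition factors. First I would verify $G$ acts faithfully on $V$. Let $K$ be the kernel of this action. If $X \leq K$ has odd order, coprime action on the abelian $2$-group $D$ gives $D = C_D(X) \times [D, X]$ with $\Omega_1([D, X]) \leq \Omega_1(D) \cap [D, X] \leq C_D(X) \cap [D, X] = 1$, forcing $[D, X] = 1$ and hence $X = 1$ by faithfulness of $G$ on $D$; so $K$ is a $2$-group and $K \leq O_2(G) = 1$. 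Each $T \in \T$ is then a transvection on $V$: since $T \in \A_D(G)_2$ gives $|D/C_D(T)| = 2$, faithfulness on $V$ forces $|V/C_V(T)| = 2$.

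Next I would pick an irreducible $G$-composition factor $W$ of $V$ on which some $T \in \T$ acts nontrivially, set $\bar G := G/C_G(W)$, and show the image $\bar T$ remains solitary in $\bar G$ relative to a suitably chosen Sylow $2$-subgroup. The decomposition $D = [D, L_T] \times C_D(L_T)$ from (S3) restricts compatibly to $W$: the image $\bar L_T \cong S_3$ acts faithfully on the $2$-dimensional summand $[W, \bar L_T]$ and trivially on $C_W(\bar L_T)$, enabling verification of (S1)--(S3) for $\bar T$ in $\bar G$ relative to the Sylow $2$-subgroup of $\bar G$ inherited from one containing $T$. Then Lemma~\ref{L:soltrans} yields $\bar G \cong S_{m_W}$ for some odd $m_W \geq 3$ with $W$ a natural $S_{m_W}$-module. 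In particular every nontrivial simple composition factor of $V$ has this form.

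For the assembly, semisimplicity of $V$ as a $G$-module follows from the vanishing of the relevant $\Ext^1_G$-groups between the simple constituents, which one obtains by inflation-restriction against a cyclic odd-order subgroup of each $S_m$-quotient acting fixed-point-freely on the associated natural module. Collecting the natural-module summands of $V$ as $V_1, \dots, V_r$ and defining $E_i := \bigcap_{j \neq i} C_G(V_j)$ gives $G = E_1 \times \cdots \times E_r$ with $E_i \cong S_{m_i}$ acting on $V_i$ as the natural module. Since $[D, T]$ has order $2$ and lies in $V$ for each $T \in \T$, we get $[D, G] \leq V$, which equals $V_1 \oplus \cdots \oplus V_r$. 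A rigidity observation — no nontrivial lift of a natural $S_m$-module to $\dsz/2^k$ ($k \geq 2$) exists, since odd-order Sylow subgroups of $S_m$ would require primitive roots of unity absent from $\dsz/2^k$ — combined with $[D, G] \cap C_D(G) = 0$ yields $D = V_1 \times \cdots \times V_r \times C_D(G)$. The main obstacle is the descent of solitariness: constructing $\bar L_T$ and a compatible Sylow $2$-subgroup of $\bar G$ so that (S1)--(S3) hold in $\bar G$ with respect to $W$; once this is secured, Lemma~\ref{L:soltrans} (via McLaughlin's classification) does the essential work, and the remaining steps are module-theoretic assembly.
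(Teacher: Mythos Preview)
Your approach diverges from the paper's and has a genuine gap at precisely the point you flag as the main obstacle: the descent of solitariness from $(G,D)$ to $(\bar G, W)$. Condition~(S2) for $\bar T$ must be verified against $\bar J = \gen{\A_W(\bar G)_2 \cap \bar S'}$ for a Sylow $2$-subgroup $\bar S'$ of $\bar G$, but $\A_W(\bar G)_2$ consists of \emph{all} transvection subgroups of $\bar G$ on $W$, not merely images of elements of $\A_D(G)_2$; moreover the image of a Sylow subgroup of $G$ need not be Sylow in $\bar G$. There is no reason the inherited $\bar L$ should satisfy $C_{\bar J}(\bar L) = \gen{(\A_W(\bar G)_2 \cap \bar S') - \{\bar T\}}$, and without this you cannot invoke Lemma~\ref{L:soltrans} to rule out $\bar G \cong SL(W)$, $Sp(W)$, or $O^{\pm}(W)$. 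The paper sidesteps the problem by never passing to a proper quotient of $G$: it sets $W = [D,G]C_D(G)/C_D(G)$, shows directly that $G$ is faithful on $W$ (using the decomposition $D = [D,L] \times C_D(L)$ from (S3) together with $O_2(G)=1$), and then proves $W$ is irreducible in a minimal counterexample by splitting $\T = \T_1 \cup \T_2$ according to how each solitary offender interacts with a putative proper $G$-submodule $W_1 < W$. This forces $G = G_1 \times G_2$ with $G_i = \gen{\T_i}$, and induction applies to each factor without ever changing the ambient group.

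A secondary error: your ``rigidity observation'' is false as stated. The group $S_3$ \emph{does} act faithfully on $(\dsz/4)^2$ lifting the natural module --- take the $3$-cycle as $\left(\begin{smallmatrix} 0 & -1 \\ 1 & -1 \end{smallmatrix}\right)$ and a transposition as $\left(\begin{smallmatrix} 0 & 1 \\ 1 & 0 \end{smallmatrix}\right)$ --- so the appeal to absent primitive roots of unity does not work. The paper instead obtains $D = V_1 \times \cdots \times V_r \times C_D(G)$ by a direct count (its Step~4): choosing adjacent transpositions $T_1,\dots,T_{m_1-1}$ generating $E_1$, one has $|[D,T_i]| = |D/C_D(T_i)| = 2$ for each $i$, which bounds both $|[D,E_1]| \leq 2^{m_1-1}$ and $|D/C_D(E_1)| \leq 2^{m_1-1}$ and forces $D = [D,E_1] \times C_D(E_1)$. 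Your $\Ext^1$ argument for semisimplicity of $\Omega_1(D)$ is also incomplete, since the cyclic odd-order subgroup you restrict to is not normal in $G$, so inflation--restriction does not apply in the form you need.
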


\begin{proof}
Set $W=[D,G]C_D(G)/C_D(G)$, and let $(G,D)$ be a counterexample with $|G|+|D|$
minimal.  Then $[D,G] = \gen{[D,T] \mid T \in \T}$ is elementary abelian since
each $[D,T]$ has order $2$, and hence $W$ is elementary abelian.
We will show in Step 1 that $[D,G,G] = [D,G]$, in Step 2 that
$C_{D/C_D(G)}(G) = 1$, in Step 3 that $G$ is faithful on $W$ with $W = [W,G]$
and $C_W(G) = 1$, in Step 4 that the theorem for $(G,W)$ implies the theorem
for $(G,D)$, and in Step 5 that $W$ is irreducible. The theorem then follows
from Lemma~\ref{L:soltrans}. We prefer to give essentially complete proofs from
first principles.

When $D > D_1 > \dots > D_k > 1$ is a chain of $G$-invariant subgroups of $D$
(or of any other abelian $p$-group with faithful action from $G$) we say for
short that a subgroup $K$ of $G$ acts \emph{nilpotently} on the chain if $K$
acts trivially on successive quotients.  If a subgroup $K$ acts nilpotently on
such a chain and $k \geq 1$, then $K$ is contained in the subgroup $L$ of all
elements that act trivially on successive quotients.  Clearly, $L$ is normal in
$G$, and $L$ is a $2$-group by \cite[Theorem~5.3.3]{Gorenstein1980}. Hence, $K
\leq L \leq O_2(G)$ in this case.

\smallskip
\noindent
{\bf Step 1:} For each $T \in \T$, choose a subgroup $L$ of $G$ such that $L
\geq T$ and $L \cong S_3$ as in Definition~\ref{D:S3based}.  Then $[D,T] \leq
[D,L] \leq [D,G]$ and $[D,L,L] = [D,L]$ by Lemma~\ref{L:nota2group}, so
that $[D,T] \leq [D,L,L] \leq [D,G,G]$. Hence $T$ centralizes $G/[D,G,G]$.  It
follows that $G$ centralizes $G/[D,G,G]$ since $G = \gen{\T}$ and the choice of
$T$ was arbitrary.  That is, $[D,G] \leq [D,G,G]$.  Since the reverse inclusion
holds, we conclude that $[D,G] = [D,G,G]$.

\smallskip
\noindent
{\bf Step 2:} Let $D_1$ be the preimage of $C_{D/C_D(G)}(G)$ in $D$. Suppose
that $D_1 > C_D(G)$. Then $C_D(G) > 1$. As before fix $T \in \T$ and choose $L$
as in Definition~\ref{D:S3based} for $T$. Then $O^2(L)$ acts nilpotently on the
chain $D_1 > C_D(G) > 1$. Since $O^2(L)$ is of odd order, it centralizes $D_1$.
Hence, by (S3),
\[
D_1 \leq C_D(O^2(L)) = C_D(L) \leq C_D(T).
\]
That is, $T$ centralizes $D_1$.  We conclude that $G$ centralizes $D_1$ since
$G = \gen{\T}$, and since the choice of $T$ was arbitrary. This contradicts
$D_1 > C_D(G)$.  Therefore, $D_1 = C_D(G)$.

\smallskip
\noindent
{\bf Step 3:} Set $W = [D,G]C_D(G)/C_D(G)$ as above. 

Using faithfulness of $G$ on $D$ and the assumption $O_2(G) = 1$, we see that
\begin{eqnarray}\label{E:Wneq1}
W > 1
\end{eqnarray}
since otherwise $G$ acts nilpotently on $D > C_D(G) > 1$.  Let $K$ be the
kernel of the action of $G$ on $W$. Then $K$ acts nilpotently on the chain $D
\geq [D,G]C_D(G) > C_D(G) \geq 1$, with the strict inclusion from
\eqref{E:Wneq1}, and hence $K \leq O_2(G) = 1$.
We conclude that
\begin{eqnarray}\label{E:Wfaithful}
\text{$G$ is faithful on $W$}.
\end{eqnarray}
By Steps 1 and 2, 
\begin{eqnarray}\label{E:noWfps}
W = [W,G] \quad \text{ and } \quad C_W(G) = 1.
\end{eqnarray}

\smallskip
\noindent
{\bf Step 4:} Suppose that the theorem holds for $(G,W)$ with respect to
$\T_\W(G)$. In particular, $G$ is a direct product of symmetric groups $E_i$ of
odd degree $m_i$ and $W$ is a direct sum of natural modules $W_i$ for $E_i$ ($1
\leq i \leq r$) satisfying $[W_i,E_j] = 1$ whenever $j \neq i$.  Also, since
part (a) holds, each member of $\T_W(G)$ is contained in $E_i$ for some $i$,
and so is generated by a transposition by Lemma~\ref{L:soltrans}.  Each member
of $\T$ is faithful on $W$ by Step 3, and so has commutator of order $2$ there.
Thus, $\T \subseteq \T_W(G)$.  For each $i$, some element of $\T$ lies in
$E_i$, because $\T$ generates $G$; hence, every transposition in $E_i$ lies in
$\T$ because $\T$ is invariant under $G$-conjugation. Consequently, $\T =
\T_W(G)$, so that (a) holds for $(G,D)$.

Set $V_1 = [D, E_1]$.  Then under the projection from $D$ onto $D/C_D(G)$,
the image of $V_1$ contains $[W,E_1] = W_1$, and so $|V_1| \geq |W_1|$.
On the other hand, we may choose $m_{1}-1$ elements $T_{1},\dots,T_{m_1-1} \in
\T \cap E_1$ corresponding to adjacent transpositions that generate $E_1$, and
see that 
\[
|V_1| = |\gen{[D,T_1],\dots,[D,T_{m_1-1}]}| \leq \prod_{1 \leq i \leq m_1-1}
|[D,T_i]| = 2^{m_1-1} = |W_1|,
\]
since $|[D,T_i]| = 2$ for each $1 \leq i \leq m_1-1$.  Hence $V_1 \cong W_1$ is
a natural $E_1$-module. It follows that $C_D(E_1) \cap V_1 = 1$. Moreover,
\[
|D/C_D(E_1)| = |D/(\cap_i C_D(T_i))| \leq \prod_{1 \leq i \leq m_1-1} |D/C_D(T_i)| = 2^{m_1-1} = |V_1|,
\]
since $|D/C_D(T_i)| = 2$ for each $i$. We conclude that $D = V_1 \times
C_D(E_1)$. In the case that $r=1$, this shows that (b) holds for $G$ and $D$.
Otherwise, apply induction (on $r$) to $E_2\cdots E_r$, $\T \cap E_2\cdots
E_r$, and $C_D(E_1)$ to obtain
\[
D = V_1 \times \cdots \times V_r \times C_D(E_1\cdots E_r)
\]
which yields part (b) for $(G,D)$.

\smallskip
\noindent
{\bf Step 5:}
By Step 4 and induction, $D = W$. We next show that $W$ is irreducible
for $G$.  Assume on the contrary that $W_1$ is a nontrivial proper
$G$-invariant subgroup of $W$. Set
\begin{align*}
\T_1 &= \{T \in \T \mid [W,T] \leq W_1\},\\
\T_2 &= \{T \in \T \mid [W_1,T] = 1\},\\
\end{align*}
$G_1 = \gen{\T_1}$, and $G_2 = \gen{\T_2}$. Then $G_1$ and $G_2$ are normal in $G$. 

Let $T \in \T-\T_2$. Then
\[
1 < [W_1,T] \leq [W,T] = [D,T],
\]
so as $|[D,T]| = 2$, all these inclusions are equalities. In particular, $[W,T]
= [W_1, T] \leq W_1$, which yields $T \in \T_1$. Hence
\begin{eqnarray}\label{E:union}
\T = \T_1 \cup \T_2 \quad \text{ and } \quad G = G_1G_2. 
\end{eqnarray}

Set $K = C_G(W/W_1) \cap C_G(W_1)$. Then $K$ acts nilpotently on the chain $W >
W_1 > 1$, and so $K \leq O_2(G) = 1$ by \eqref{E:Wfaithful} and assumption on
$G$. Since $[G_1,G_2] \leq G_1 \cap G_2 \leq K$, we see that 
\begin{eqnarray}\label{E:intersection}
\T_1 \cap \T_2 = \varnothing \quad \text{ and } \quad G = G_1 \times G_2
\end{eqnarray}
from \eqref{E:union}.

Now $[W,G_1,G_2] = 1$ by construction and $[G_1,G_2,W] = 1$ from
\eqref{E:intersection}, so $[W,G_2,G_1]=1$ by the Three Subgroups Lemma.  Hence
$[W,G_1G_2] = [W,G_1][W,G_2]$. Further, $[W,G_1] \cap [W,G_2] \leq
C_W(G_1G_2)$, which is the identity by \eqref{E:noWfps}, and so
\begin{eqnarray}\label{E:Wdecomp}
W = [W,G] = [W,G_1] \times [W,G_2]
\end{eqnarray}
again by \eqref{E:noWfps} and \eqref{E:union}.

Finally, $\T_1$ is not empty since otherwise $G = G_2$ centralizes $W_1$
contrary to \eqref{E:noWfps}. Similarly, $\T_2$ is not empty because $W =
[W,G]$ by \eqref{E:Wdecomp}. Hence $1 < |G_1| < |G|$ and $1 < |G_2| < |G|$. One
then checks that $T \in \T_k$ is solitary in $G_k$ (on $W_k$, $k = 1, 2$),
using \eqref{E:Wdecomp} and the fact that an $L \cong S_3$ containing $T$ in
$G$ is generated by $G$-conjugates of $T$.

Induction applied to $(G_1,W_1)$ and $(G_2,W_2)$ now yields the theorem for
$(G,W) = (G,D)$. We conclude that $W$ is irreducible for $G$, as desired.
In particular, each element of $\T$ induces a transvection on $W$.

Now Lemma~\ref{L:soltrans} shows that $G$ is a symmetric group of odd degree
and $W$ is a natural module for $G$, and $\T$ is the collection of subgroups
generated by a transposition. Hence $(G,W) = (G,D)$ is not a
counterexample.
\end{proof}

The following is \cite[Proposition~3.3]{Oliver2013} for $p=2$. The verification
of \eqref{E:r=1} in the proof is inspired by Lemma~7.7 of \cite{Chermak2013}.

\begin{proposition}\label{P:3.3}
Let $(\Gamma,S,Y)$ be a general setup for the prime $2$. Set $\F =
\F_S(\Gamma)$, $D = Z(Y)$, and $G = \Gamma/C_\Gamma(D)$.  Let $\R \subseteq
\sS(S)_{\geq Y}$ be an $\F$-invariant interval such that for each $Q \in
\sS(S)_{\geq Y}$, $Q \in \R$ if and only if $J_{\A_D(G)}(Q) \in \R$.  Then
$L^k(\F;\R) = 0$ for all $k \geq 2$. 
\end{proposition}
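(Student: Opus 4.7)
The plan is to argue by contradiction, in analogy with the proof of Proposition~\ref{P:3.3odd}. Choose a counterexample $(\Gamma, S, Y, \R, k)$ with $(k, |\Gamma|, |\Gamma/Y|, |\R|)$ minimal in the lexicographic order. Steps~1 and 2 in the proof of \cite[Proposition~3.3]{Oliver2013}, now carried out for $p = 2$ (where $L^1$ may be nonzero but the target is $L^k$ for $k \geq 2$), reduce the problem to the case $k = 2$, $(\Gamma, S, Y)$ a reduced setup, and $\R = \{P \in \sS(S)_{\geq Y} \mid J_{\A_D(G)}(P) = Y\}$. Setting $\A = \A_D(G)^\circ \cup \hat{\A}_D(G)^\circ$, Lemma~\ref{L:replacement} together with the fact that every over-offender contains a member of $\hat{\A}_D(G)^\circ$ shows that this $\R$ equals $\{P \in \sS(S)_{\geq Y} \mid J_\A(P) = Y\}$, exactly the interval considered in Theorem~\ref{T:ntlim}.

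If $\B := \A - \T_D(G)$ is nonempty, Theorem~\ref{T:ntlim} immediately gives $L^2(\F; \R) = 0$, a contradiction. So every member of $\A$ is solitary. As solitary offenders have order $2$ by Definition~\ref{D:S3based}, Remark~\ref{R:Ahat4} forces $\hat{\A}_D(G) = \varnothing$, and $\A = \A_D(G)^\circ \subseteq \A_D(G)_2 \cap \T_D(G)$.

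Let $\Gamma_0$ be the preimage in $\Gamma$ of $G_0 := \gen{\T_D(G)}$; then $\Gamma_0 \trianglelefteq \Gamma$ and $Y \leq \Gamma_0$. Combining Lemma~\ref{L:oliexact}(a), which identifies $L^2(\F; \R)$ with $L^1(\F; \Q)$ for $\Q = \sS(S)_{\geq Y} - \R$, with Lemma~\ref{L:olirestinj}, I pass the nonvanishing of $L^1(\F; \Q)$ to the setup $(\Gamma_0, S_0, Y)$ where $S_0 = S \cap \Gamma_0$. If $G_0 < G$, this contradicts the minimality of $|\Gamma|$ after reapplying the reductions of Steps~1 and 2 inside $\Gamma_0$ to land in a reduced setup. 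Hence $G = G_0$. Since $(\Gamma, S, Y)$ is reduced, $O_2(G) = 1$, and Theorem~\ref{T:genbyS3based} identifies $G$ as $E_1 \times \cdots \times E_r$ with $E_i \cong S_{m_i}$ and $m_i \geq 3$ odd, and $D = V_1 \times \cdots \times V_r \times C_D(G)$ with $V_i$ the natural $E_i$-module.

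The heart of the argument, and the main technical obstacle, is then to verify $L^2(\F; \R) = 0$ in this concrete configuration, where the norm arguments of \S\ref{S:norm2} genuinely fail because every offender is solitary. My plan is to exploit the product decomposition to reduce to the case $r = 1$, i.e., $\Gamma = Y \rtimes S_m$ with $m$ odd and $V := Y/C_Y(D)$ the natural module. In this small model, Lemma~\ref{L:oliexact}(b) identifies $L^1(\F; \Q)$ as the cokernel of $C_D(\Gamma) \hookrightarrow C_D(\Gamma^*)$, where $\Gamma^*$ consists of those $g \in \Gamma$ conjugating some $Q \in \Q$ into $\Q$. For each solitary transposition $T \in \T_D(G) \cap S$, the associated subgroup $L \cong S_3$ from Definition~\ref{D:S3based} normalizes the subgroup $\gen{T,Y} \in \Q$ and hence lies in $\Gamma^*$, so a direct analysis of the collective action of these $L$'s on $V$ using the decomposition $D = [D,L] \times C_D(L)$ of (S3) should yield $C_D(\Gamma^*) = C_D(\Gamma)$ and hence $L^1(\F; \Q) = 0$, completing the proof.
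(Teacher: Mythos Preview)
Your reductions through the point where $G$ is identified as a product of odd-degree symmetric groups follow the paper closely and are sound. The genuine gap is in the final step for $r=1$.

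You invoke Lemma~\ref{L:oliexact}(b) to ``identify $L^1(\F;\Q)$ as the cokernel of $C_D(\Gamma) \hookrightarrow C_D(\Gamma^*)$.'' But Lemma~\ref{L:oliexact}(b) computes $L^1(\F;\R)$, not $L^1(\F;\Q)$; what you need is $L^1(\F;\Q) \cong L^2(\F;\R)$ from part~(a). These are different groups, and the distinction matters: for $G=S_3$ acting on $D$ with $[D,G]\cong C_2\times C_2$, one has $\Q=\{S\}$, $\Gamma^*=N_\Gamma(S)$, and $C_D(\Gamma^*)/C_D(\Gamma)$ has order~$2$. So $L^1(\F;\R)\neq 0$ here, exactly the obstruction the paper is built around. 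Your control-of-fixed-points computation therefore cannot yield the vanishing you want; it is aimed at the wrong functor. (Separately, your assertion that ``$L$ normalizes $\gen{T,Y}$'' is false: an order-$3$ element of $L$ sends $T$ to a different transposition, and $\gen{T,Y}^g=\gen{T^g,Y}\neq\gen{T,Y}$.)

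The paper handles the $r=1$ case by working directly with the bar resolution via the rigid map $\tau$ of Lemmas~\ref{L:1cocyclesetup} and~\ref{L:1cocyclesetup2}. It first passes to the index-$(2n{+}1)$ subgroup $\Gamma_n=C_\Gamma(z_n)$ (image $S_{2n}$), which is a smaller general setup, to normalize $\tau$ to be the identity on $\Gamma_n^*$; then it adjusts by a further central coboundary to kill $\tau$ on $N_\Gamma(Q_{n-1})$; finally, using that the remaining normalizers $N_\Gamma(Q_i)$ for $i\leq n-2$ are generated by pieces lying in $\Gamma_n^*$ and $N_\Gamma(Q_{n-1})$ (Lemma~\ref{L:symgen}), it concludes $\tau$ is the identity on all of $\Gamma^*$. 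This argument is genuinely cohomological in degree~$1$ of $\Z^\Q_\F$ and has no fixed-point interpretation, which is why the methods of \S\ref{S:norm2} do not reach it.
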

\begin{proof}
Assume the hypotheses of the proposition, but assume that the conclusion is
false. Let $(\Gamma, S, Y, \R, k)$ be counterexample for which the four-tuple
$(k,|\Gamma|,|\Gamma/Y|,|\R|)$ is minimal in the lexicographic ordering.  Steps
1--3 in the proof of \cite[Proposition~3.3]{Oliver2013} show that $\R = \{P
\leq S \mid J_{\A_D(G)}(P) = Y\}$, $k = 2$, and $(\Gamma,S,Y)$ is a reduced
setup. 

By Lemma~\ref{L:olijm}(b), $\R$ is a proper subset of $\sS(S)_{\geq Y}$.  Let
$\Q = \sS(S)_{\geq Y}-\R$ and $\A = \A_D(G)^\circ \cup \hat{\A}_D(G)^\circ$.
Then $\Q$ is not empty, and so $\A$ is not empty.  Since $(\Gamma, S, Y)$ is a
counterexample, Theorem~\ref{T:ntlim} shows that 
\begin{eqnarray}\label{E:transvections} 
\A = \T_D(G).  
\end{eqnarray}
That is, $\hat{\A}_D(G)$ is empty and every best offender minimal under
inclusion is solitary in $G$ relative to $\bar{S}$. 
Since $L^2(\F; \R) \neq 0$, we see that 
\begin{eqnarray}\label{E:L1Q}
L^1(\F,\Q) \neq 0.
\end{eqnarray}
from Lemma~\ref{L:oliexact}.  We prove next that
\begin{eqnarray}\label{L:genbytrans}
G = \gen{\A}. 
\end{eqnarray}
Let $G_0 = \gen{\A}$. Let $\Gamma_0$ be the preimage of $G_0$ in $\Gamma$, set
$S_0 = S \cap \Gamma_0$, set $\F_0 = \F_{S_0}(G_0)$, and set $\Q_0 =
\sS(S_0)_{\geq Y} \cap \Q$.  Then $\Gamma_0 \norm \Gamma$ and by
\eqref{E:transvections}, we have $Y \leq S_0$. Further, $(\Gamma_0, Y, S_0)$ is
a reduced setup and $\Q_0$ is an $\F_0$-invariant interval. Since each member
of $\A$ is contained in $G_0$, we have $\Gamma_0 \cap Q \in \Q$ for each $Q \in
\Q$. By Lemma~\ref{L:olirestinj}, the restriction map induces an injection
$L^1(\F;\Q) \to L^1(\F_0; \Q_0)$ and so $L^1(\F_0; \Q_0) \neq 0$ by
\ref{E:L1Q}. Therefore, $\Gamma = \Gamma_0$ (and hence $G = G_0$) by minimality
of $|\Gamma|$, which completes the proof of \eqref{L:genbytrans}.

Therefore, $G$ and its action on $D$ are described by
Theorem~\ref{T:genbyS3based}.  We adopt the notation in that
theorem for the remainder of the proof.  In the decomposition of part (b)
there, each $V_i$ is $G$-invariant and so each is $S$-invariant.  Thus, the
centralizer of $S$ in $D$ factors as
\begin{eqnarray}\label{E:CDSfact}
C_D(S) = C_{V_1}(S) \times \cdots \times C_{V_r}(S) \times C_D(G). 
\end{eqnarray}

Fix an inclusion-normalized $1$-cocycle $t$ for $\Z_\F^\Q$ representing a
nonzero class in $L^1(\F;\Q)$ by \eqref{E:L1Q}, and let $\tau\colon \Gamma^* \to
\Gamma^*$ be the rigid map associated with $t$.  We show next that
\begin{eqnarray}\label{E:r=1}
r = 1. 
\end{eqnarray}
We assume $r > 1$ and aim for a contradiction.  Let $G_1 = E_1$ and $G_2 =
E_2\cdots E_r$.  For $i = 1, 2$, let $K_i$ be the preimage of $G_i$ in
$\Gamma$, and set $\Gamma_i = K_iS$ and $\F_i = \F_S(\Gamma_i)$. Then
$(\Gamma_i, S, Y)$ is a general setup, and $\Gamma_i < \Gamma$ by assumption.
Hence $L^1(\F_i;\Q) \cong L^2(\F_i;\R) = 0$ by minimality of $|\Gamma|$.  As
the restriction of $t$ to $\O(\F_i^c)$ represents the zero class, by
Lemma~\ref{L:1cocyclesetup2}(b) there are elements $z_i \in Z(S) = C_D(S)$ such
that $\tau$ is conjugation by $z_i$ on $(\Gamma_i)^*$.  Since $D = [D,G_i]
\times C_D(G_i)$ by Theorem~\ref{T:genbyS3based}(b) and $C_D(S)$
factorizes correspondingly, it is also the case that $\tau$ is conjugation by
the component of $z_i$ in $[D,G_i]$ on $(\Gamma_i)^*$, and so we may and will
take $z_i \in [D, G_i]$ to be this component. Set $t' = t\,du$ where $u$ is the
constant $0$-cochain defined by $u(P) = (z_1z_2)^{-1}$ for each $P \in \Q$.
Then by Lemma~\ref{L:1cocyclesetup2}(b), upon replacing $t$ by $t'$ and $\tau$
by the rigid map $\tau'$ associated with $t'$, we may assume that $\tau$ is the
identity when restricted to $(\Gamma_i)^*$ for each $i = 1, 2$. 

Now the objective is to show that $\tau$ is the identity on $\Gamma^*$.  Let
$W$ be the $\Gamma$-conjugacy functor defined by $W(P) = J_\A(P)$ for each $P
\geq Y$, and by $W(P) = P$ otherwise.  Let $\W$ be the collection of subgroups
$Q \in \Q$ such that $W(Q) = Q$ and such that $Q$ is well-placed with respect
to $W$.  (We refer to Appendix~\ref{A:groups} for background on conjugacy
functors and well-placed subgroups.) Since $W(W(Q)) = W(Q)$ and $W(Q) \in \Q$
whenever $Q \in \Q$, it thus suffices to show that $\tau$ is the identity on
$N_\Gamma(Q)$ for each $Q \in \W$ by Lemma~\ref{L:conjfam-rigid}.

Let $Q \in \W$, so that the image of $Q$ in $G$ is generated by members
of $\A$. It then follows from Theorem~\ref{T:genbyS3based}(a) that $Q =
Q_1Q_2$ with $Q_1 \cap Q_2 = Y$, where $\bar{Q}_1$ and $\bar{Q}_2$ are the
projections in $G_1$ and $G_2$ of $\bar{Q}$.  If it happens that $Q_i \notin
\Q$ for $i = 1$ or $2$, this means that $Q_i = Y$. Since $Y \notin \Q$ but $Q
\in \Q$, we may assume without loss that $Q_2 \in \Q$. Now $Q$ is well-placed
and $W(Q) = Q$, so that $Q$ is fully $\F$-normalized. A straightforward
argument shows that $Q_2$ is also fully $\F$-normalized.

Let $g \in N_\Gamma(Q_2)$, and write $g = g_1g_2$ with $g_i \in K_i$.  Since
$N_S(Q_2)$ is a Sylow $2$-subgroup of $N_\Gamma(Q_2)$, we have by
Lemma~\ref{L:normquot} that $\ol{N_\Gamma(Q_2)} = N_G(\bar{Q}_2)$, and the
latter is $G_1 \times N_{G_2}(\bar{Q}_2)$.  Since
$\bar{g}_1 \in G_1$ normalizes $\bar{Q}_2$, some element in the coset
$g_1C_\Gamma(D)$ normalizes $Q_2$. We may therefore write $g_1 = h_1c_1$ where
$h_1 \in N_{\Gamma_1}(Q_2)$ and $c_1 \in C_\Gamma(D)$.  So as
$h_1 \in (\Gamma_1)^*$ and $c_1g_2 \in (\Gamma_2)^*$ (both conjugate $Q_2$ to
$Q_2$), we see that $\tau$ fixes $g$. We conclude that $\tau$ is the identity
on $N_\Gamma(Q_2)$. However, then $\tau$ is the identity on $N_\Gamma(Q)$ since
$N_\Gamma(Q) \leq N_\Gamma(Q_2)$.  We conclude that $\tau$ is the identity on
$\Gamma^*$, a contradiction.  This concludes the proof of \eqref{E:r=1}.

By \eqref{E:r=1}, we may fix $m = 2n+1$ such that $G = E_1 \cong S_m$ and write
$\Omega$ for the set of even order subsets of $\{1,\dots,m\}$. Identify $G$
with $S_m$ and $V_1$ with $\Omega$, and set $V = V_1$ for short. We may assume
that $S$ stabilizes the collection $\{\{2i-1,2i\} \mid 1 \leq i \leq n\}$.  In
this situation, whenever $\tau$ is conjugation by an element of $C_D(S)$ on
some subset of $\Gamma$, we always take that element to lie in $V$ by
convention; since $D = [D,G] \times C_D(G) = V \times C_D(G)$ and $C_D(G) \leq
C_D(S)$, there is no loss in doing this.

For each $1 \leq i \leq n$, set $z_i = \{1,\dots,2i\}$ and $z'_i =
\{2i+1,\dots,2n\}$, and let $Q_i$ be the preimage in $S$ of $\gen{(1,2),\dots,
(2i-1,2i)}$. Then $C_V(N_\Gamma(Q_n)) = \gen{z_n}$, and
\begin{eqnarray}\label{E:N_Gamma(Qi)}
C_V(N_\Gamma(Q_i)) = \gen{z_i} < \gen{z_i, z'_i} =  C_V(N_\Gamma(Q_i) \cap
N_\Gamma(Q_n)) \quad \text{ and } \quad z_n = z_iz_i' 
\end{eqnarray}
for all $1 \leq i \leq n-1$. 

Set $\Gamma_n = C_\Gamma(z_n)$ (the setwise stabilizer of $z_n$).  Then $S \leq
N_\Gamma(Q_n) \leq \Gamma_n$, and the image of $\Gamma_n$ in $G$ is isomorphic
with $S_{2n}$.  Since $(\Gamma_n, S, Y)$ is a general setup and not a
counterexample, we may adjust $t$ by a coboundary and assume that 
\begin{eqnarray}\label{E:tauGamman}
\tau \text { is the identity on } \Gamma_n^* := \Gamma_n \cap \Gamma^*. 
\end{eqnarray}

With notation and argument as in the proof of \eqref{E:r=1}, it suffices to
show that $\tau$ is the identity on $N_\Gamma(Q)$ for each $Q \in \W$.
Moreover, notice that for any subgroup $Q \in \Q$ with $W(Q) = Q$, we have
$W(S^*) = Q_n$ for every Sylow $2$-subgroup $S^*$ of $N_\Gamma(Q)$. Hence, if
$W(Q) = Q$, then $Q \in \W$ if and only if $Q$ is fully $\F$-normalized.

Suppose it can be shown that
\begin{eqnarray}
\label{E:tau=1onNQi}
\text{$\tau$ is the identity on $N_\Gamma(Q_i)$ for
each $1 \leq i \leq n$,}
\end{eqnarray}
and fix $Q \in \W$. Then, by Lemma~\ref{L:burnside}, there is an integer $i$
and an element $g \in N_\Gamma(Q_n) \subseteq \Gamma_n^*$ such that $Q_i^g = Q$.
Moreover, $N_\Gamma(Q_i)^g = N_\Gamma(Q)$. As $\tau$ fixes $g$ by
\eqref{E:tauGamman}, it follows that $\tau$ is the identity on $N_\Gamma(Q)$ by
Lemma~\ref{L:1cocyclesetup}(b). We are therefore reduced to proving
\eqref{E:tau=1onNQi}.

Now for $i = n$, we have $N_\Gamma(Q_n) \subseteq \Gamma_n^*$, and so
$\tau$ is the identity on this normalizer by \eqref{E:tauGamman}. Hence $n \geq
2$ since $\Gamma$ is a counterexample.  

Let $Q \in \W$ be a well-placed subgroup conjugate to $Q_{n-1}$. Then $Q$ is
conjugate to $Q_{n-1}$ by an element $g \in N_\Gamma(Q_n)$.  By
Lemma~\ref{L:1cocyclesetup2}(a), the restriction of $\tau$ to $N_\Gamma(Q)$ is
conjugation by an element of $V$ that centralizes $N_\Gamma(Q) \cap
N_\Gamma(Q_n)$. As $\tau$ fixes $g$, similarly $\tau$ acts by conjugation on
$N_\Gamma(Q_{n-1})$ by an element $z$ of $C_V(N_\Gamma(Q_{n-1}) \cap
N_\Gamma(Q_n))$.  By \eqref{E:N_Gamma(Qi)}, $z \in \gen{z_{n-1}, z'_{n-1}}$ and
$z_n = z_{n-1}z'_{n-1}$.  As $z_{n-1}$ centralizes $N_\Gamma(Q_{n-1})$, if
necessary we may replace $t$ by $t' = t\,du$ where $u(P) = z_n$ for each $P \in
\Q$ and obtain that $\tau$ is the identity on $N_\Gamma(Q_{n-1})$, because the
rigid map associated with $du$ is conjugation by $z_n$ on $N_\Gamma(Q_{n-1})$
by Lemma~\ref{L:1cocyclesetup2}(b). Also, $\tau$ remains the identity on
$\Gamma^*_n$ after this adjustment, because $z_n \in C_V(\Gamma^*_n)$.  Thus,
$n \geq 3$ since $\Gamma$ is a counterexample. 


Finally, fix $i$ with $1 \leq i \leq n-2$.  Since $\bar{Q}_i =
\gen{(1,2),\dots,(2i-1,2i)}$, we see that $N_G(\bar{Q}_i) = G_i \times G_i'$
where $G_i \cong C_2 \wr S_i$ moves the first $2i$ points, and where $G_i'
\cong S_{m-2i}$ moves the remaining points in the natural action.  Let
$\Gamma_i$ and $\Gamma_i'$ be the preimages of $G_i$ and $G_i'$ in $\Gamma$, so
that $N_\Gamma(Q_i) \subseteq \Gamma_i\Gamma_i'$.  By Lemma~\ref{L:symgen},
$G_i'$ is generated by $G_i' \cap N_G(\bar{Q}_n)$ and $G_i' \cap
N_G(\bar{Q}_{n-1})$, and so $\Gamma_i'$ is the subgroup generated by
$C_\Gamma(D)$, $N_{\Gamma_i'}(Q_n)$, and $N_{\Gamma_i'}(Q_{n-1})$ by
Lemma~\ref{L:normquot}. Hence $\tau$ is the identity on $\Gamma_i'$.  As
$\Gamma_i \subseteq \Gamma_n^*$, $\tau$ is also the identity on $\Gamma_i$ by
\eqref{E:tauGamman}. Therefore, $\tau$ is the identity on $N_\Gamma(Q_i)$.
This completes the proof of \eqref{E:tau=1onNQi}.  We conclude that $\tau$ is
the identity on $\Gamma^*$, and now Lemma~\ref{L:1cocyclesetup}(c) shows that
this is contrary to our choice of $t$.  
\end{proof}

\appendix
\section{Modified norm argument}\label{A:norm2}

Here we give a proof of Theorem~\ref{T:glawc2}, which is the main technical
result needed for the results of \S\ref{S:norm2}. We have postponed the proof
until now so as to not interrupt the flow of that section, and because the
proof is similar to that given in Theorem~A1.4 of \cite{Glauberman1971}.

Given a group $G$ and two nonempty subsets $X$ and $Y$ of $G$, define the
product set 
\[
X \cdot Y = \{xy \mid x \in X, y \in Y\}. 
\]

\begin{definition}\label{D:settransversalnorm}
Let $G$ be a finite group and $V$ an abelian group on which $G$ acts. Let $X$
be a subset of $G$. 
\begin{enumerate}
\item[(a)] A subset $Y$ of $G$ is a \emph{transversal} to $X$ in $G$ if for each $g
\in G$, there are unique $x \in X$ and $y \in Y$ such that $g = xy$. 
\item[(b)] The \emph{norm} from $X$ to $G$ relative to the transversal $Y$ is the
group homomorphism $\N_{X;\,Y}^G\colon C_V(X) \to C_V(G)$ given by $v \mapsto
\prod_{y \in Y}v^y$. 
\end{enumerate}
\end{definition}

Given a subset $X$, a transversal $Y$ to $X$ in $G$, and an element $g \in G$,
one sees that the map $y \mapsto y_g$ is a bijection $Y \to Y$, where $y_g \in
Y$ is the unique element such that $yg = xy_g$ for some $x \in X$. Hence the
image of $\N_{X;\,Y}^G$ does indeed lie in $C_V(G)$. 

\begin{lemma}\label{L:normproddecomp}
Let $P$ be a finite $p$-group, let $V$ an abelian group on which $P$ acts, and
let $Q$ and $R$ be subgroups of $P$. Then there exists a transversal to $Q
\cdot R$ in $P$, and $\N^P_{R} = \N^P_{Q \cdot R;\, Y}\N^Q_{Q \cap
R}|_{\,C_V(R)}$ for any such transversal $Y$. 
\end{lemma}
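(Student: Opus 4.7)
The plan is to prove the lemma in two steps: existence of a transversal, followed by the norm identity.

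For existence I would induct on $|P|$. Since $P$ is a $p$-group, $|Q\cdot R| = |Q||R|/|Q\cap R|$ is a power of $p$ at most $|P|$ and so divides $|P|$, making $|Y| = |P|/|Q\cdot R|$ a positive integer. Build $Y$ iteratively by choosing $y_1 = 1$ and at each stage picking $y_k$ outside $\bigcup_{i<k}(Q\cdot R)y_i$ so that $(Q\cdot R)y_k$ is disjoint from the earlier translates. Availability of such $y_k$---the only technical point---is handled by a standard reduction using a central subgroup $Z\le Z(P)$ of order $p$, passing to the quotient $P/Z$ (with $Z$ chosen to land appropriately in $Q$, $R$, or $Q\cdot R$) so that induction applies.

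For the identity, fix $Y$ and let $T$ be a set of right coset representatives for $Q\cap R$ in $Q$, as used in the definition of $\N^Q_{Q\cap R}$. Expanding,
\[
\N^P_{Q\cdot R;\,Y}\bigl(\N^Q_{Q\cap R}(v)\bigr) = \prod_{y\in Y}\Bigl(\prod_{t\in T}v^{t}\Bigr)^{y} = \prod_{(t,y)\in T\times Y}v^{ty},
\]
while $\N^P_R(v) = \prod_{u}v^{u}$ over a right transversal $U$ to $R$ in $P$. Since $v\in C_V(R)$, the factor $v^g$ depends only on the right coset $Rg$, so it suffices to verify that $\{ty : t\in T,\,y\in Y\}$ is a complete set of right coset representatives for $R$ in $P$. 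The cardinality check $|T||Y| = (|Q|/|Q\cap R|)(|P|/|Q\cdot R|) = |P|/|R|$ is immediate, and injectivity of $(t,y)\mapsto ty$ follows from the uniqueness of the decomposition $P = (Q\cdot R)\cdot Y$: if $t_1y_1 = t_2y_2$, then $y_2 = (t_2^{-1}t_1)y_1$ exhibits a second $(Q\cdot R)\cdot Y$-decomposition of $y_2$, forcing $y_1 = y_2$ and then $t_1 = t_2$.

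The main obstacle is showing distinct pairs yield distinct right cosets. Suppose $Rt_1 y_1 = R t_2 y_2$, so $t_2 y_2 = r t_1 y_1$ for some $r \in R$. If $y_1 \ne y_2$, one first argues that $rt_1 \notin Q\cdot R$, for otherwise the element $t_2 y_2$ would lie in both $(Q\cdot R)y_1$ and $(Q\cdot R)y_2$, violating the transversal property of $Y$; pursuing the unique $(Q\cdot R)\cdot Y$-decomposition of $rt_1$ further leads to a contradiction, since it would place $y_2 y_1^{-1}$ in $Q\cdot R$ contrary to $y_1,y_2$ representing distinct translates. Hence $y_1 = y_2$, whence $r = t_2 t_1^{-1}$ lies in $Q\cap R$, and the right-transversal property of $T$ forces $t_1 = t_2$. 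With $\{ty\}$ established as a right transversal to $R$ in $P$, the identity follows by matching the two products term-by-term.
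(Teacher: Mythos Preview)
The paper does not give its own argument here; it simply cites Lemmas A1.1 and A1.2 of Glauberman's 1971 paper. So there is no in-paper proof to compare against, but your attempt has a real gap.

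Your central claim---that $\{ty : t\in T,\ y\in Y\}$ is a right transversal to $R$ in $P$ for \emph{every} transversal $Y$ to $Q\cdot R$---is false, and the argument breaks exactly where you write ``it would place $y_2 y_1^{-1}$ in $Q\cdot R$''. From $t_2y_2 = rt_1y_1$ you obtain only $y_2 y_1^{-1} = t_2^{-1} r t_1 \in QRQ$, and $QRQ$ is typically strictly larger than $QR$ when $QR$ is not a subgroup. Concretely, take $P=D_8=\langle a,b\mid a^4=b^2=1,\ bab=a^{-1}\rangle$, $Q=\langle b\rangle$, $R=\langle ab\rangle$. Then $Q\cap R=1$, $QR=\{1,a^3,b,ab\}$, and $Y=\{1,a^{3}b\}$ is a genuine transversal to $QR$; but with $T=Q$ one computes $TY=\{1,a,b,a^{3}b\}$, and $Ra=\{a,b\}$ already contains two of these elements. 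So $TY$ is not a right transversal to $R$, and the norm identity fails for this $Y$ (as one can check directly on the regular module).

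What does go through: if $Y$ happens also to be a transversal to $RQ$, then $R\cdot T\cdot Y = RQ\cdot Y = P$ with unique factorisation, and your counting argument finishes the job. So the existence proof must construct a $Y$ with this added symmetry, and the identity should be claimed only for such $Y$. Your greedy construction does not accomplish this and is separately incomplete: choosing $y_k\notin\bigcup_{i<k}(QR)y_i$ does not force $(QR)y_k$ to miss the earlier translates, since disjointness is the stronger condition $y_ky_i^{-1}\notin RQR$ (in the $D_8$ example, $a\notin QR$ yet $(QR)a\cap QR\neq\varnothing$). The induction via a central subgroup of order $p$ is the right skeleton, but it has to be carried out carefully enough to control both $QR$ and $RQ$ simultaneously.
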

\begin{proof}
This is a combination of Lemmas~A1.1 and A1.2 in \cite{Glauberman1971}, with
the statement on norms following from Lemma~A1.1(a) there and
Definition~\ref{D:settransversalnorm}(b) here.  
\end{proof}

\begin{theorem}\label{T:norm2-appendix}
Suppose $G$ is a finite group, $S$ is a Sylow $p$-subgroup of $G$, and $D$ is
an abelian $p$-group on which $G$ acts. Let $\A$ be a nonempty set of subgroups
of $S$, and set $J = \gen{\A}$. Let $H$ be a subgroup of $G$ containing
$N_G(J)$, and set $V = \Omega_1(D)$. Assume that $J$ is weakly closed in $S$
with respect to $G$, and that
\begin{align}\label{E:normcond2app}
\parbox[t]{0.85\linewidth}{whenever $A \in \A$, $g \in G$, and $A \nleq H^g$,
then $\N_{A \cap H^g}^{A} = 1$ on $V$,}
\end{align}
or more generally,
\begin{align}\label{E:gennormcond2app}
\parbox[t]{0.85\linewidth}{whenever $g \in G$ and $J \nleq H^g$,
then $\N_{J \cap H^g}^{J} = 1$ on $V$.}
\end{align}
Then $C_D(H) = C_D(G)$. 
\end{theorem}
\begin{proof}

We follow the argument from \cite[Theorem~A1.4]{Glauberman1971}.  Let $H$ be a
subgroup of $G$ containing $N_G(J)$. Then $S \leq H$ since $J$ is weakly closed
in $S$ with respect to $G$.

In the situation of \eqref{E:gennormcond2app}, there is $A \in \A$ with $A
\nleq J \cap H^g$, since $J = \gen{\A}$. Then $A \cap (J \cap H^g) = A \cap
H^g$, and we see that \eqref{E:gennormcond2app} follows from
$\eqref{E:normcond2app}$ upon applying Lemma~\ref{L:normproddecomp} with $J$,
$A$, and $J \cap H^g$ in the roles of $P$, $Q$, and $R$, respectively.

Thus, we assume \eqref{E:gennormcond2app} and prove $C_D(H) = C_D(G)$ by
induction on the order of $D$. We may assume $D > 1$.  The $p$-th power
homomorphism on $D$ has kernel $V$ and image $\mho^1(D)$, and so $D/V \cong
\mho^1(D)$. Since $\mho^1(D) < D$ and $\Omega_1(\mho^1(D)) \leq V$, the pair
$(G, \mho^1(D))$ satisfies the hypotheses of the theorem in place of $(G,D)$.
Thus \begin{eqnarray}\label{E:ind} C_{D/V}(G) = C_{D/V}(H).
\end{eqnarray}
by induction.  

Let $z \in C_D(H)$ and suppose first that $\gen{V,z} < D$. The coset $Vz$ is
fixed by $H$, and so it is fixed by $G$ by \eqref{E:ind}. Thus, $\gen{V,z}$ is
$G$-invariant. Apply induction with $\gen{V,z}$ in the role of $D$ to obtain
that $z \in C_D(G)$ as required. 

Next assume that $\gen{V,z} = D$ and $V < D$. Then $C_V(H) = C_V(G)$ by
induction.  Set $z' = \N_H^G(z)$. Then $z' \in C_D(G) \leq C_D(H)$. Since $Vz$
is $G$-invariant, $z' \equiv z^{|G:H|}$ modulo $V$. Then as $|G:H|$
is prime to $p$, we see that $\gen{V,z'} = D$ and 
\[
z \in C_D(H) = C_V(H)\gen{z'} = C_V(G)\gen{z'} = C_D(G) 
\]
as required.

Finally assume that $V = D$. Given a set $[H\bs G/J]$ of $H$-$J$ double coset
representatives in $G$ containing the identity, and a transversal $[J/J\cap
H^g]$ to $J \cap H^g$ in $J$ for each $g \in [H \bs G/J]$, then the disjoint
union of $g[J/J\cap H^g]$ as $g$ ranges over $[H\bs G/J]$ is a transversal to
$H$ in $G$. Further, $\N^J_{J \cap H}(z) = \N_J^J(z) = z$.  Thus, the norm map
decomposes as
\begin{eqnarray} \label{E:mackeynorm}
\N_H^G(z) = \prod_{g \in [H\bs G/J]} \N^J_{J \cap H^g}(z^g) = z \prod_{g
\in [H\bs G/J]-\{1\}} \N_{J \cap H^g}^J(z^g). 
\end{eqnarray}
If $g \in [H\bs G/J] - \{1\}$ and $J \leq H^g$, then we may choose $h \in H$
such that $J^{g^{-1}h} \leq S$. Then $g^{-1}h \in N_G(J) \leq H$, since $J$ is
weakly closed in $S$ with respect to $G$, and so $HgJ = Hh^{-1}gJ = H$ yields
$g = 1$ by our choice, a contradiction. Thus, $J \nleq H^g$ for each $g \in
[H\bs G/J]-\{1\}$.  We conclude that $\N_{J \cap H^g}^J(z^g) = 1$ for each such
$g$ from \eqref{E:gennormcond2app}, and then $z = \N_H^G(z) \in C_V(G)$ from
\eqref{E:mackeynorm}. 
\end{proof}

\section{Conjugacy and conjugacy functors}\label{A:groups}

We give here some elementary lemmas from finite group theory that are needed at
various places in the paper. We also discuss the notion of a $\Gamma$-conjugacy
functor $W$, and describe how it gives rise to the $\Gamma$-conjugation family
of subgroups well-placed with respect to $W$, which is also a conjugation
family for the fusion system of $\Gamma$. 

\begin{lemma}[Burnside]\label{L:burnside}
Let $G$ be a finite group and $S$ a Sylow $p$-subgroup of $G$.  Assume that $J$
is an abelian subgroup of $S$ that is weakly closed in $S$ with respect to $G$
and that $X$ and $Y$ are subgroups of $J$. If $X$ and $Y$ are conjugate in $G$,
then they are conjugate in $N_G(J)$. 
\end{lemma}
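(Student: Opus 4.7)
The plan is to start with an arbitrary $g \in G$ satisfying $X^g = Y$ and produce from it an element of $N_G(J)$ that also conjugates $X$ to $Y$. The key observation is that since $J$ is abelian and $X, Y \leq J$, both $J$ and $J^g$ lie in $C_G(Y)$: indeed $J$ centralizes $X$ because $J$ is abelian, so $J^g$ centralizes $X^g = Y$, while $J$ centralizes $Y$ directly. Thus $J$ and $J^g$ are two $p$-subgroups of $C_G(Y)$.

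Next I would apply Sylow's theorem inside $C_G(Y)$: there is a Sylow $p$-subgroup $T$ of $C_G(Y)$ containing $J$, and some $c \in C_G(Y)$ such that $J^{gc} \leq T$ (after conjugating $J^g$ into $T$ by an element of $C_G(Y)$). Now embed $T$ in a Sylow $p$-subgroup $R$ of $G$; then $J$ and $J^{gc}$ both lie in $R$. Since all Sylow $p$-subgroups of $G$ are conjugate, there is $h \in G$ with $R^h = S$, whence $J^h \leq S$ and $J^{gch} \leq S$.

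Weak closure of $J$ in $S$ with respect to $G$ now forces $J^h = J$ and $J^{gch} = J$, so both $h$ and $gch$ lie in $N_G(J)$; consequently $gc \in N_G(J)$. It remains to check that $gc$ conjugates $X$ to $Y$: since $c$ centralizes $Y$, we have $X^{gc} = Y^c = Y$, as required.

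There is no real obstacle here, just two applications of Sylow plus the bookkeeping required to keep the conjugating element in the centralizer of $Y$, so that it does not disturb the equation $X^g = Y$ when composed with $g$. The only subtlety to flag is the necessity of adjusting by an element of $C_G(Y)$ (not merely any $G$-element) precisely so that this final equality survives.
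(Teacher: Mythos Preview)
Your proof is correct and follows essentially the same route as the paper's: both arguments observe that $J$ and $J^g$ lie in $C_G(Y)$, apply Sylow inside $C_G(Y)$ to bring them into a common $p$-subgroup via an element of $C_G(Y)$, then conjugate into $S$ and invoke weak closure. The only cosmetic difference is that the paper adjusts $J$ by an element $h \in C_G(Y)$ and concludes with $gh^{-1} \in N_G(J)$, while you adjust $J^g$ by $c \in C_G(Y)$ and conclude with $gc \in N_G(J)$; these are symmetric variants of the same idea.
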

\begin{proof}
Assume $X$ and $Y$ are conjugate in $G$, and fix $g \in G$ with $X^g = Y$.
Since $\gen{J, J^g} \leq C_G(Y)$, we may choose $h \in C_G(Y)$ such that
$\gen{J^h, J^g}$ is a $p$-group. Choose $g_1 \in G$ with $\gen{J^{hg_1},
J^{gg_1}} \leq S$. Then $J^{hg_1} = J = J^{gg_1}$ since $J$ is weakly closed in
$S$ with respect to $G$. Thus, $gh^{-1} \in N_G(J)$, and $X^{gh^{-1}} =
Y^{h^{-1}} = Y$ since $h^{-1}$ centralizes $Y$. 
\end{proof}

The following lemma gives an alternative, more elementary, argument for
Lemma~\ref{L:1cocyclesetup2}(a) using the norm map. One should apply it there
by taking $(N_\Gamma(Q), N_S(Q), Q, \tau)$ in the role of $(\Gamma, S, Y,
\tau)$ below.

\begin{lemma}
\label{L:gaschutz}
Let $(\Gamma, S, Y)$ be a general setup for the prime $p$ and $\tau$ an
automorphism of $\Gamma$ that centralizes $S$. Then $\tau$ is conjugation by an
element of $Z(S)$.  
\end{lemma}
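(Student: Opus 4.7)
The plan is to recognize $g \mapsto \tau(g)g^{-1}$ as a principal $1$-cocycle with values in the abelian $p$-group $Z(Y)$, and then use Sylow-restriction injectivity in cohomology (equivalently, a norm / averaging argument) to see this cocycle is a coboundary; the coboundary element will be the required element of $Z(S)$.

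First I would note that $Y \leq S$, since a normal $p$-subgroup of $\Gamma$ lies in every Sylow $p$-subgroup, and so $\tau$ fixes $Y$ pointwise. The main preparatory step is to verify that $\phi(g) := \tau(g)g^{-1}$ actually lies in $Z(Y)$ for every $g \in \Gamma$. For each $y \in Y$, applying $\tau$ to $g^{-1}yg \in Y$ gives $\tau(g)^{-1}y\tau(g) = g^{-1}yg$, whence $g\tau(g)^{-1} \in C_\Gamma(Y)$; the general-setup hypothesis $C_\Gamma(Y) \leq Y$ forces $C_\Gamma(Y) = Z(Y)$, so $\phi(g) \in Z(Y)$. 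A short expansion of $\tau(gh)(gh)^{-1}$ then shows $\phi$ is a $1$-cocycle for the conjugation action of $\Gamma$ on $Z(Y)$, and $\phi|_S$ is trivial since $\tau|_S = \id$.

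Next I would invoke the standard fact that for a $\Gamma$-module $M$ of $p$-power order, restriction $H^1(\Gamma;M) \to H^1(S;M)$ is injective, since its composition with corestriction is multiplication by $[\Gamma:S]$, which is invertible on the $p$-torsion group $H^1(\Gamma;M)$. Taking $M = Z(Y)$, this forces the class of $\phi$ to vanish, so there is $z \in Z(Y)$ with $\phi(g) = z \cdot g z^{-1} g^{-1}$ for every $g \in \Gamma$; equivalently $\tau(g) = zgz^{-1}$, so $\tau$ is conjugation by $z$. Since $z \in Z(Y) \leq Y \leq S$ and $\tau$ centralizes $S$, we have $z \in C_S(S) = Z(S)$. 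I expect the main technical point to be the identification of $\phi$ as a cocycle valued in the abelian group $Z(Y)$ rather than merely in $Y$: this reduction to abelian coefficients is precisely where the general-setup hypothesis $C_\Gamma(Y) \leq Y$ enters, and it is what makes the cohomological argument run. An entirely elementary version avoids cohomology and produces $z$ as the ``norm'' $\bigl(\prod_{t \in T}\phi(t)\bigr)^{1/[\Gamma:S]}$ over any coset transversal $T$ of $S$ in $\Gamma$, with the root taken inside the $p$-group $Z(Y)$.
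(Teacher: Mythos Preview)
Your argument is correct. The route differs from the paper's own proof of this lemma, though the underlying idea is the same averaging/transfer mechanism. The paper embeds $\tau$ into the semidirect product $\hat{\Gamma}=\Gamma\langle\tau\rangle$, observes that $\hat D = Z(Y)\times\langle\tau\rangle$ is normal abelian there, and applies the norm map $\N_{\hat S}^{\hat\Gamma}$ to the element $\tau$ itself; raising to the inverse of $[\Gamma:S]$ modulo $|D|$ produces a central element $\sigma=\tau d$ with $d\in Z(S)$. You instead stay inside $\Gamma$, recognize $g\mapsto\tau(g)g^{-1}$ as a $1$-cocycle with values in $Z(Y)$ (this is exactly where $C_\Gamma(Y)\leq Y$ is used, as you note), and kill its class by restriction injectivity $H^1(\Gamma;Z(Y))\hookrightarrow H^1(S;Z(Y))$.

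Two small observations worth making. First, your argument never uses the hypothesis that $\tau$ has $p$-power order; the paper needs it so that $\hat S=S\times\langle\tau\rangle$ is Sylow in $\hat\Gamma$, but your direct cocycle argument does not, so you have in fact proved a slightly stronger statement. Second, the paper itself gives essentially your cohomological argument elsewhere (as the second proof of part (a) of Lemma~\ref{L:1cocyclesetup2}), so the two viewpoints are already present in the paper; what you have done is apply the cohomological version directly at the level of the group $\Gamma$ rather than via the orbit-category machinery. Your closing remark about the explicit norm formula $\bigl(\prod_{t\in T}\phi(t)\bigr)^{1/[\Gamma:S]}$ is the standard unwinding of corestriction--restriction and matches the spirit of the paper's proof.
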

\begin{proof}
Set $D = Z(Y)$ for short. Denote by $\hat{\Gamma} = \Gamma\gen{\tau}$ the
semidirect product of $\Gamma$ by $\gen{\tau}$, and set $\hat{S} = S\gen{\tau}
= S \times \gen{\tau}$ and $\hat{D} = D\gen{\tau} = D \times \gen{\tau}$. Then
$Y$ and $D$ are normal in $\hat{\Gamma}$, and $\hat{D} = C_{\hat{\Gamma}}(Y)$,
so that $\hat{D}$ is normal abelian in $\hat{\Gamma}$.

Let $k = |D|$. Now take any element $g$ of $\Gamma$, and let $z = g^{-1}g^\tau
= (\tau^{-1})^g \tau = g^{-1} \tau^{-1} g \tau$.  Since $\hat{\Gamma}/\Gamma$
is abelian and $\hat{D}$ is normal in $\hat{\Gamma}$, we have $z \in \Gamma
\cap \hat{D} = D \leq S$, so $z^\tau = z$.  By induction, $g^{\tau^i} = gz^i$
for all $i \geq 1$.  Hence $g^{\tau^k} = gz^k = g$.  Since $g$ is arbitrary and
$\tau$ is an automorphism, $\tau^k =1$.  Thus, the order of $\tau$ is a power
of $p$.

Consider the norm $\N := \N_{\hat{S}}^{\hat{\Gamma}}\colon C_{\hat{D}}(\hat{S})
\to C_{\hat{D}}(\hat{\Gamma})$, and set $n = |\Gamma:S|$. Since $\hat{\Gamma}$
centralizes $\hat{D}/D$ and since $n$ is prime to $p$, the restriction
$\N|_{\gen{\tau}}$ is injective. Choose an integer $m$ such that $mn=1$ (mod
$k$). Then $\N(\tau) \equiv \tau^n$ (mod $D$) and $\N(\tau)^m \equiv \tau$
(mod $D$). Let $\sigma = \N(\tau)^m$, and choose $d \in D$ with $\sigma = \tau
d$. Then $\tau \equiv d^{-1}$ modulo $Z(\hat{\Gamma})$ since $\hat{\Gamma}$
centralizes $\sigma$.  Further, since $\hat{S}$ centralizes $\tau$, we see that
$d \in C_{\hat{\Gamma}}(\hat{S}) \cap D = Z(S)$, as desired. 
\end{proof}

\begin{lemma}
\label{L:symgen}
Let $n \geq 2$ and let $G$ be the symmetric group $S_{2n+1}$. Set
\begin{align*}
R_1 &= \gen{(1,2),(3,4),\dots,(2n-1,2n)}, \text{ and}\\
R_2 &= \gen{(1,2),(3,4),\dots,(2n-3,2n-2)}.
\end{align*}
Then $G$ is generated by $N_G(R_1)$ and $N_G(R_2)$.
\end{lemma}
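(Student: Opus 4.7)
The plan is to show that $H := \langle N_G(R_1), N_G(R_2)\rangle$ contains all transpositions $(i, 2n+1)$ with $1 \leq i \leq 2n$; since these generate $S_{2n+1}$, this forces $H = G$.

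First I would identify the structure of the two normalizers. The subgroup $R_1$ moves every point in $\{1,\dots,2n\}$ and fixes $2n+1$, so any element of $N_G(R_1)$ must stabilize the singleton orbit $\{2n+1\}$ and permute the $n$ pairs $\{2i-1,2i\}$; hence $N_G(R_1) \cong C_2 \wr S_n$, acting on $\{1,\dots,2n\}$ and fixing $2n+1$. Similarly, $R_2$ moves $\{1,\dots,2n-2\}$ and fixes the three points $2n-1,2n,2n+1$, so $N_G(R_2) \cong (C_2 \wr S_{n-1}) \times S_3$, with the $S_3$-direct factor acting as the full symmetric group on $\{2n-1,2n,2n+1\}$.

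Next I extract the required transpositions. The transposition $(2n,2n+1)$ lies in the $S_3$-factor of $N_G(R_2) \subseteq H$. For each $k$ with $1 \leq k \leq n-1$, the element $\sigma_k = (2k-1,2n-1)(2k,2n)$, which swaps the $k$-th and $n$-th pairs, lies in the $S_n$-level of $N_G(R_1) \subseteq H$. Conjugating $(2n,2n+1)$ by $\sigma_k$ sends $2n$ to $2k$ and fixes $2n+1$, producing $(2k,2n+1) \in H$ for $k = 1,\dots,n-1$; together with $(2n,2n+1)$ itself, this yields $(2k,2n+1) \in H$ for all $k = 1,\dots,n$. Finally, conjugating each $(2k,2n+1)$ by the transposition $(2k-1,2k) \in R_1 \subseteq N_G(R_1) \subseteq H$ yields $(2k-1,2n+1) \in H$ as well.

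Thus $(i,2n+1) \in H$ for every $i \in \{1,\dots,2n\}$, and since these ``star transpositions'' generate $S_{2n+1}$, we conclude $H = G$. There is no real obstacle to this argument; it is a direct computation once the $S_3$-factor of $N_G(R_2)$ and the pair-swapping elements of $N_G(R_1)$ are placed in $H$.
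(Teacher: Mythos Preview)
Your proof is correct and follows essentially the same approach as the paper: both identify the $S_3$-factor of $N_G(R_2)$ to obtain the transposition $(2n,2n+1)$ and then use the transitivity of $N_G(R_1)$ on $\{1,\dots,2n\}$ to produce all transpositions. The paper phrases the second step as ``$H$ is $2$-transitive and contains a transposition'' rather than explicitly conjugating to the star transpositions $(i,2n+1)$, but the content is the same.
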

\begin{proof}
Let $H = \gen{N_G(R_1), N_G(R_2)}$ and $\Omega = \{1,2,\dots,2n+1\}$.  Now
$N_G(R_1)$ is transitive on $\Omega-\{2n+1\}$. Similarly, $N_G(R_2)$ is
transitive on $\Omega-\{2n-1,2n,2n+1\}$ and contains a subgroup inducing the
symmetric group on $\{2n-1,2n,2n+1\}$. Therefore, $H$ is transitive on $\Omega$
and the stabilizer of $2n+1$ in $H$ is transitive on $\Omega-\{2n+1\}$. Since
$H$ contains the transposition $(2n,2n+1)$ and is $2$-transitive on $\Omega$,
it contains all transpositions. Hence, $H = G$.  
\end{proof}

We next give the background on conjugacy functors and well-placed subgroups,
which are used in \S\ref{S:reduction2} and \S\ref{S:transvections}.

\begin{definition}\label{D:conjfunctor}
Let $\Gamma$ be a finite group with Sylow $p$-subgroup $S$. A
$\Gamma$-\emph{conjugacy functor on} $\sS(S)$ is a mapping $W\colon \sS(S) \to
\sS(S)$ such that for all $P \leq S$, 
\item[(a)] $W(P) \leq P$;
\item[(b)] $W(P) \neq 1$ whenever $P \neq 1$; and
\item[(c)] $W(P)^g = W(P^g)$ whenever $g \in \Gamma$ with $P^g \leq S$. 
\end{definition}

\begin{lemma}\label{L:conjfunctor}
Let $\Gamma$ be a finite group, $S$ a Sylow $p$-subgroup of $\Gamma$, and $W$ a
$\Gamma$-conjugacy functor on $\sS(S)$.  Then for all $P \leq S$, 
\begin{enumerate}
\item[(a)] $N_S(P) \leq N_S(W(P))$;
\item[(b)] $W(P) = W(N_S(W(P)))$ if and only if $W(P) = W(S)$; and 
\item[(c)] $P = N_S(W(P))$ if and only if $P = S$. 
\end{enumerate}
\end{lemma}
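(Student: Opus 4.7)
The plan is to prove the three parts in order, invoking at each step either property (c) of the conjugacy functor or the standard fact that in a finite $p$-group, a proper subgroup is properly contained in its normalizer.

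For part (a), if $g \in N_S(P)$, then $P^g = P \leq S$, so property (c) of Definition~\ref{D:conjfunctor} yields $W(P)^g = W(P^g) = W(P)$, and hence $g \in N_S(W(P))$. In particular, applying this with $P = S$ shows that $W(S)$ is normal in $S$, a fact I will use repeatedly below.

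For part (b), the reverse implication is immediate: if $W(P) = W(S)$, then $N_S(W(P)) = N_S(W(S)) = S$ by normality of $W(S)$ in $S$, and so $W(N_S(W(P))) = W(S) = W(P)$. For the forward implication, assume $W(P) = W(N_S(W(P)))$ and set $T = N_S(W(P))$, so that $W(T) = W(P)$. Applying part (a) with $T$ in the role of $P$ gives
\[
N_S(T) \leq N_S(W(T)) = N_S(W(P)) = T,
\]
so $N_S(T) = T$. Since $T \leq S$ and $S$ is a $p$-group, this forces $T = S$, and hence $W(P) = W(T) = W(S)$.

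For part (c), if $P = S$, then $N_S(W(P)) = N_S(W(S)) = S = P$ by normality of $W(S)$ in $S$. Conversely, if $P = N_S(W(P))$, then part (a) gives $N_S(P) \leq N_S(W(P)) = P$, so $N_S(P) = P$, which again forces $P = S$ as $S$ is a $p$-group.

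There is really no serious obstacle; the only step requiring a moment's thought is part (b), where one must apply (a) not to $P$ but to the intermediate subgroup $T = N_S(W(P))$ in order to leverage the $p$-group normalizer condition. Everything else is a direct unwinding of Definition~\ref{D:conjfunctor}.
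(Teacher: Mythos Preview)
Your proof is correct and follows essentially the same approach as the paper's own proof: both use Definition~\ref{D:conjfunctor}(c) for part (a), then for (b) and (c) set $T = N_S(W(P))$, apply (a) to $T$, and use the normalizer condition in a $p$-group to force $T = S$ (respectively $P = S$). Your version is simply more explicit about the reverse implications and the fact that $W(S) \trianglelefteq S$.
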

\begin{proof}
Let $P \leq S$ and $T = N_S(W(P))$. Part (a) holds by
Definition~\ref{D:conjfunctor}(c). If $W(T) = W(P)$, then by (a), $N_S(T) \leq
N_S(W(T)) = N_S(W(P)) = T$, so that $T = S$ and $W(P) = W(T) = W(S)$. Now (b)
holds since the converse is clear.

If $P = T$, then again $N_S(P) \leq T = P$, so that $P = S$. Now (c) holds
since the converse is clear. 
\end{proof}

A $\Gamma$-conjugacy functor $W$ on $\sS(S)$ can be uniquely extended to a
$\Gamma$-conjugacy functor $\widehat{W}$ in the sense of \cite[\S
5]{Glauberman1971}: given a $p$-subgroup $P$ of $\Gamma$, choose $g \in \Gamma$
with $P^g \leq S$ and define $\widehat{W}(P) = W(P^g)^{g^{-1}}$. Then
$\widehat{W}$ is a mapping on all $p$-subgroups of $\Gamma$ that is uniquely
determined, by Lemma~\ref{D:conjfunctor}(c). 

Each $\Gamma$-conjugacy functor $W$ gives rise to a conjugation family via its
well-placed subgroups.  A \emph{conjugation family} for the fusion system $\F$
over $S$ is a collection $\C$ of subgroups of $S$ such that every morphism in
$\F$ is a composition of restrictions of $\F$-automorphisms of the members of
$\C$. A conjugation family $\C$ for $S$ in $\Gamma$ in the sense of \cite[\S
3]{Glauberman1971} is itself a conjugation family for $\F_S(\Gamma)$ in the
above sense.

For a $\Gamma$-conjugacy functor $W$ on $\sS(S)$ and a subgroup $P \leq S$,
define $W_1(P) = P$ and, for all $i \geq 2$, define inductively $W_{i}(P) =
W(N_S(W_{i-1}(P)))$. Then $P$ is said to be \emph{well-placed} (with respect to
$W$) if $W_i(P)$ is fully $\F_S(\Gamma)$-normalized for all $i \geq 1$. 

\begin{theorem}\label{T:wellplaced}
Let $\Gamma$ be a finite group, $S$ a Sylow $p$-subgroup of $\Gamma$, and $W$ a
$\Gamma$-conjugacy functor on $\sS(S)$. Then every subgroup of $S$ is
$\Gamma$-conjugate to a well-placed subgroup of $S$. The set of well-placed
subgroups of $S$ forms a conjugation family for $\F_S(\Gamma)$. 
\end{theorem}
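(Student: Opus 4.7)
The plan is to prove the two assertions in succession. I would first observe that Lemma~\ref{L:conjfunctor}(a), applied to the subgroup $N_S(W_{i-1}(P))$ in the role of $P$, yields
\[
N_S(W_{i-1}(P)) \leq N_S(W(N_S(W_{i-1}(P)))) = N_S(W_i(P)),
\]
so $|N_S(W_i(P))|$ is nondecreasing in $i$ and hence eventually stable, at which point $W_i(P)$ itself becomes constant. Thus being well-placed requires checking only finitely many full-normalization conditions, which makes an induction possible.

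For the first assertion, I would prove by induction on $n$ that every $P \leq S$ has a $\Gamma$-conjugate $P' \leq S$ with $W_1(P'), \dots, W_n(P')$ all fully $\F_S(\Gamma)$-normalized; iterating beyond the stabilization point then delivers a well-placed conjugate. The base case $n=1$ is immediate, since every $\F$-conjugacy class contains a fully normalized member. For the inductive step, suppose $P$ satisfies the condition up to $n-1$ and consider $W_n(P)$. Since $W_{n-1}(P)$ is fully normalized, $N_S(W_{n-1}(P))$ is a Sylow $p$-subgroup of $N_\Gamma(W_{n-1}(P))$; and $W_n(P) = W(N_S(W_{n-1}(P)))$ is normal in this Sylow by the $\Gamma$-equivariance of $W$ in Definition~\ref{D:conjfunctor}(c). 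A standard lifting lemma for fusion systems then produces $h \in \Gamma$ with $W_n(P)^h$ fully normalized in $S$, and chosen so that $N_S(W_n(P))^h \leq N_S(W_n(P)^h) \leq S$. Since $W_{n-1}(P) \leq N_S(W_n(P))$ (because $W_{n-1}(P)$ is normal in $N_S(W_{n-1}(P))$ and normalizes $W_n(P)$), this forces $W_{n-1}(P)^h \leq S$, and a simple cardinality count using that $W_{n-1}(P)$ is fully normalized shows that $W_{n-1}(P)^h$ is again fully normalized; an iteration of the same idea transfers full normalization to each $W_i(P)^h$ with $i < n$. Replacing $P$ by $P^h$ completes the inductive step.

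For the second assertion, I would apply an Alperin--Goldschmidt style fusion argument. Given an $\F$-morphism $c_g \colon P \to P^g$ between subgroups of $S$, use the first assertion to replace $P$ and $P^g$ by well-placed $\F$-conjugates, and induct on $|S:N_S(P)|$. When $P = S$, the morphism $c_g$ is itself an $\F$-automorphism of the well-placed subgroup $S$. Otherwise, $N_S(P)$ is a proper overgroup of $P$ in $S$ (by Lemma~\ref{L:conjfunctor}(c)) and contains $W_2(P)$; an $\F$-isomorphism between $N_S(P)$ and $N_S(P^g)$ (which exists because both are fully normalized and $\F$-conjugate) factors $c_g$ so as to reduce the task to a morphism between subgroups with strictly larger $S$-normalizer, where the inductive hypothesis applies. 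Composing these restrictions exhibits $c_g$ as the required composition of automorphisms of well-placed subgroups.

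The main obstacle is the lifting step in the first assertion, namely choosing $h$ that simultaneously makes $W_n(P)^h$ fully normalized and preserves the full-normalization status of every earlier $W_i(P)^h$. The resolution depends on the interplay between the $\Gamma$-equivariance of $W$ and the fact that the previously constructed $W_i(P)$ are all $W$-images (hence normal subgroups) of the common Sylow $p$-subgroup $N_S(W_{n-1}(P))$ of $N_\Gamma(W_{n-1}(P))$, together with a Sylow-theoretic transfer inside this normalizer to route $h$ through $N_\Gamma(W_{n-1}(P))$.
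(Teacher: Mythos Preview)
The paper itself does not prove this; it cites Lemma~5.2 and Theorem~5.3 of \cite{Glauberman1971}. Your sketch is an independent attempt in the spirit of those arguments, and the overall strategy is sound, but there are two gaps.

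For the first assertion, you establish that each $W_i(P)^h$ is fully normalized and then declare that replacing $P$ by $P^h$ finishes the induction. This conflates $W_i(P)^h$ with $W_i(P^h)$; the two are not automatically equal, because the operator $W_i$ involves taking $S$-normalizers, which do not commute with conjugation by an arbitrary $h$. The repair is to note that the chain $N_S(W_1(P)) \leq \cdots \leq N_S(W_n(P))$ is nested, so your choice of $h$ with $N_S(W_n(P))^h \leq S$ forces $N_S(W_i(P))^h \leq S$ for every $i \leq n$; then your cardinality argument gives $N_S(W_i(P)^h) = N_S(W_i(P))^h$ whenever $W_i(P)$ is fully normalized, and Definition~\ref{D:conjfunctor}(c) yields $W_{i+1}(P^h) = W_{i+1}(P)^h$ by a short induction on $i$. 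Your closing paragraph misdescribes this resolution: the $W_i(P)$ are \emph{not} all $W$-images of $N_S(W_{n-1}(P))$ (only $W_n(P)$ is), and no detour through $N_\Gamma(W_{n-1}(P))$ is needed.

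For the second assertion, the opening move ``replace $P$ and $P^g$ by well-placed conjugates'' is circular as written: those replacement conjugations must themselves be factored through automorphisms of well-placed subgroups, which is exactly what you are trying to prove. The standard fix is to run the induction on $|S:P|$ over \emph{all} $P \leq S$ (not just well-placed ones): choose a single well-placed $P_0$ with $P^a = P_0 = (P^g)^b$, write $g = a\cdot(a^{-1}gb)\cdot b^{-1}$ so that the middle factor lies in $N_\Gamma(P_0)$, and then handle $a$ (and symmetrically $b^{-1}$) by first adjusting by an element of $N_\Gamma(P_0)$ so that $N_S(P)$ is carried into $N_S(P_0) \leq S$, after which the inductive hypothesis applies to the strictly larger subgroup $N_S(P)$.
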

\begin{proof}
This is a combination of Lemma~5.2 and Theorem~5.3 of \cite{Glauberman1971},
given the above remarks.  
\end{proof}

\bibliographystyle{amsalpha}{ }
\bibliography{/home/justin/owncloud/math/research/mybib}
\end{document}